\documentclass[11pt]{amsart}
\usepackage[dvipdfmx]{graphicx, color}
\usepackage{amssymb, amsmath,amsthm}
\usepackage{epstopdf}
\DeclareGraphicsRule{.tif}{png}{.png}{`convert #1 `dirname #1`/`basename #1 .tif`.png}
 
\textwidth = 6.5 in
\textheight = 8.5 in
\oddsidemargin = -0.1 in
\evensidemargin = -0.1 in
\parindent = 0.2in

\newtheorem{thm}{{\bf Theorem}}[section]
\newtheorem{lem}[thm]{{\bf Lemma}}
\newtheorem{cor}[thm]{{\bf Corollary}}
\newtheorem{prop}[thm]{{\bf Proposition}}

\newtheorem{rem}[thm]{Remark}

\newtheorem{ques}[thm]{Question}

\numberwithin{equation}{section}

\begin{document} 

\title[]{
	The asymptotic behavior of the minimal pseudo-Anosov dilatations in the hyperelliptic handlebody groups}
	\dedicatory{Dedicated to Professors Taizo Kanenobu, Yasutaka Nakanishi and Makoto Sakuma
for their sixtieth birthdays}

\author[S. Hirose]{%
Susumu Hirose
}
\address{%
Department of Mathematics,  
Faculty of Science and Technology, 
Tokyo University of Science, 
Noda, Chiba, 278-8510, Japan}
\email{%
hirose\_susumu@ma.noda.tus.ac.jp
}

\author[E. Kin]{%
    Eiko Kin
}
\address{%
       Department of Mathematics, Graduate School of Science, Osaka University Toyonaka, Osaka 560-0043, JAPAN
}
\email{%
        kin@math.sci.osaka-u.ac.jp
}

\subjclass[2000]{%
	Primary 57M27, 37E30, Secondary 37B40
}

\keywords{%
	pseudo-Anosov, dilatation, handlebody group, hyperelliptic mapping class group, Hilden group, wicket group}

\date{%
\today
}

\thanks{%
The first author is supported by 
Grant-in-Aid for
Scientific Research (C) (No. 16K05156),
Japan Society for the Promotion of Science. 
The second author is supported by 
Grant-in-Aid for
Scientific Research (C) (No. 15K04875), 
Japan Society for the Promotion of Science. 
	} 
	
\begin{abstract} 
We consider 
the hyperelliptic handlebody group on a closed surface of genus $g$. 
This is the subgroup of the mapping class group on a closed surface of genus $g$ 
consisting of isotopy classes of homeomorphisms on the surface 
that commute with some fixed hyperelliptic involution 
and that extend to homeomorphisms on the handlebody. 
We prove that the logarithm of the minimal dilatation (i.e, the minimal entropy) of 
all pseudo-Anosov elements in the  hyperelliptic handlebody group of genus $g$ 
is comparable to $1/g$. 
This means that the asymptotic behavior of the minimal pseudo-Anosov dilatation of 
the subgroup of genus $g$ in question 
 is the same as that of the ambient mapping class group of genus $g$. 
 We also determine finite presentations of the hyperelliptic handlebody groups. 
\end{abstract}
\maketitle

\section{Introduction}
\label{section_Introduction}

Let $\varSigma_g$ be a closed, orientable surface of genus $g$, and 
let $\mathrm{Mod}(\varSigma_g)$ be the mapping class group on $\varSigma_g$. 
The {\it hyperelliptic mapping class group} $\mathcal{H}(\varSigma_g)$ is the subgroup of $\mathrm{Mod}(\varSigma_g)$ 
consisting of isotopy classes of orientation preserving homeomorphisms on $\varSigma_g$ 
that commute with some fixed hyperelliptic involution $\mathcal{S}: \varSigma_g \rightarrow \varSigma_g$. 
If $g \ge 3$, then $\mathcal{H}(\varSigma_g)$ is of infinite index in $\mathrm{Mod}(\varSigma_g)$, and 
it is a particular subgroup in some sense. 
Despite such a property, 
$\mathcal{H}(\varSigma_g)$ plays a significant role to study the mapping class group $\mathrm{Mod}(\varSigma_g)$. 
Especially, elements of $\mathcal{H}(\varSigma_g)$ have a handy description via the spherical braid group $SB_{2g+2}$ 
with $2g+2$ strings,  
which is proved by Birman-Hilden: 
$$ \mathcal{H} (\varSigma_g)/ \langle \iota \rangle 
\simeq SB_{2g+2}/ \langle \Delta^2 \rangle,$$
where 
$\iota = [\mathcal{S}] \in  \mathcal{H} (\varSigma_g)$ is the mapping class of $\mathcal{S}$, and 
$\Delta \in SB_{2g+2}$ is a half twist braid. 
Here $ \langle \iota \rangle$ and $\langle \Delta^2 \rangle$ are the subgroups generated by 
$\iota$ and $\Delta^2$ respectively. 
There exists a natural surjective homomorphism 
from $SB_{2g+2}$ to the mapping class group $\mathrm{Mod}(\varSigma_{0, 2g+2})$ 
on a sphere with $2g+2$ punctures:  
$$\Gamma: SB_{2g+2} \rightarrow \mathrm{Mod}(\varSigma_{0, 2g+2})$$ 
with the kernel generated by $\Delta^2$.

Let $G$ be a subgroup of $\mathrm{Mod}(\varSigma_g)$. 
Whenever  $G \cap \mathcal{H}(\varSigma_g)$ contains a non-trivial element, 
it is worthwhile to consider the subgroup 
$G \cap \mathcal{H}(\varSigma_g)$  of $\mathrm{Mod}(\varSigma_g)$. 
The group $G \cap \mathcal{H}(\varSigma_g)$ would be an intriguing one  in its own right. 
Also we may have a chance to find new examples or phenomena on $G$ 
by using a handy braid description of  $G \cap \mathcal{H}(\varSigma_g)$. 
In the case $G$ is the Torelli group $\mathcal{I}(\varSigma_g)$ 
consisting of elements of $\mathrm{Mod}(\varSigma_g)$ 
which act trivially on $H_1(\varSigma_g; {\Bbb Z})$, 
the hyperelliptic Torelli group $\mathcal{I}(\varSigma_g) \cap \mathcal{H}(\varSigma_g)$ 
is studied by Brendle-Margalit, see \cite{BM} and references therein.  
In this paper, we consider the {\it handlebody group} $\mathrm{Mod}({\Bbb H}_g)$ 
as $G$. 
This is the subgroup of $\mathrm{Mod}(\varSigma_g)$ 
consisting of isotopy classes of orientation preserving homeomorphisms on $\varSigma_g$ 
that extend to homeomorphisms on the  handlebody ${\Bbb H}_g$ of genus $g$. 
The main subgroup of $\mathrm{Mod}(\varSigma_g)$  in this paper 
is the {\it hyperelliptic handlebody group} 
$$\mathcal{H}({\Bbb H}_g) = \mathrm{Mod}({\Bbb H}_g) \cap \mathcal{H} (\varSigma_g).$$ 
We prove a version of Birman-Hilden's theorem about $\mathcal{H}({\Bbb H}_g)$, 
and identify the subgroup of $SB_{2g+2}$ 
corresponding to $\mathcal{H}({\Bbb H}_g)$. 
More precisely, we prove  in Theorem~\ref{thm_HiKi} that 
$$\mathcal{H}({\Bbb H}_g)/ \langle \iota \rangle \simeq SW_{2g+2}/\langle \Delta^2 \rangle,$$
where $SW_{2g+2}$ is so called the {\it wicket group}. 
(See Section~\ref{subsubsection_relation}.) 
Hilden introduced a subgroup $SH_{2g+2}$ of $\mathrm{Mod}(\varSigma_{0, 2g+2})$ 
in \cite{Hilden}, which is now called the {\it (spherical) Hilden group}. 
The group $SH_{2g+2}$ is isomorphic to the image $\Gamma(SW_{2g+2})$ 
under $\Gamma: SB_{2g+2} \rightarrow \mathrm{Mod}(\varSigma_{0, 2g+2})$  
(Theorem~\ref{thm_HiWi}).  
As an application of the above relation between $\mathcal{H}({\Bbb H}_g)$ and $SW_{2g+2}$, 
we determine a finite presentation of $\mathcal{H}({\Bbb H}_g)$ in Appendix~\ref{section_appendix}, 
see Theorem~\ref{thm:presentaion-HHg}

We are interested in the asymptotic behavior of the minimal dilatations of all pseudo-Anosov elements in 
$\mathcal{H}({\Bbb H}_g)$ varying $g$. 
To state our results, we need some setup. 
Let $\varSigma$ be an orientable, connected surface possibly with punctures. 
A homeomorphism $\Phi: \varSigma \rightarrow \varSigma$ is {\it pseudo-Anosov} 
if there exist a pair of transverse measured foliations 
$(\mathcal{F}^u, \mu^u)$ and 
$(\mathcal{F}^s, \mu^s)$ 
and  a constant $\lambda = \lambda(\Phi)>1$ such that 
$$\Phi(\mathcal{F}^u, \mu^u) = (\mathcal{F}^u, \lambda \mu^u) \  \hspace{2mm}\mbox{and}\ \hspace{2mm}
\Phi(\mathcal{F}^s, \mu^s) = (\mathcal{F}^s, \lambda^{-1} \mu^s).$$
Then 
$\mathcal{F}^u$ and $\mathcal{F}^s$ are called the {\it unstable} and {\it stable foliations},  and 
$\lambda$ is called the {\it dilatation} or {\it stretch factor} of $\Phi$. 
The topological entropy $\mathrm{ent}(\Phi)$ is precisely equal to $\log \lambda(\Phi)$. 
A significant property of pseudo-Anosov homeomorphisms is that 
$\mathrm{ent}(\Phi)$ attains the minimal entropy among all homeomorphisms on $\varSigma$ 
which are isotopic to $\Phi$, 
see \cite[Expos\'{e} 10]{FLP}. 
An element $\phi$ of the mapping class group $\mathrm{Mod}(\varSigma)$ of $\varSigma$ 
is called {\it pseudo-Anosov} if $\phi$ contains a pseudo-Anosov homeomorphism $\Phi: \varSigma \rightarrow \varSigma$ 
as a representative. 
In this case, we let $\lambda(\phi)= \lambda(\Phi)$ and $\mathrm{ent}(\phi) = \mathrm{ent}(\Phi)$, 
and we call them the {\it dilatation} and {\it entropy} of $\phi$ respectively.  
We call 
$$\mathrm{Ent}(\phi)= |\chi(\varSigma)|\mathrm{ent}(\phi)$$ 
the {\it normalized entropy} of $\phi$, 
where $\chi(\varSigma)$ is the Euler characteristic of $\varSigma$.

Let  $f: \varSigma \rightarrow \varSigma$ be a representative of 
a given mapping class $\phi \in \mathrm{Mod}(\varSigma)$. 
The mapping torus ${\Bbb T}_{\phi} = {\Bbb T}_{[f]}$  is defined by 
$${\Bbb T}_{\phi} = \varSigma \times {\Bbb R}/ \sim,$$ 
where  $\sim$ identifies $(x,t+1)$ with $(f(x),t)$ for $x \in \varSigma$ and $t \in {\Bbb R}$. 
We call $\phi$ the {\it monodromy} of ${\Bbb T}_{\phi}$. 
We sometimes call the representative $f \in \phi$  the monodromy of ${\Bbb T}_{\phi}$. 
The {\it suspension flow} $f^t$ on ${\Bbb T}_{\phi}$ is a flow induced by 
the vector field $\tfrac{\partial}{\partial t}$. 
The hyperbolization theorem by Thurston \cite{Thurston3} states that 
when a $3$-manifold $M$ is a surface bundle over the circle, that is 
$M \simeq {\Bbb T}_{\phi}$ for some mapping class $\phi$, 
$M$ admits a hyperbolic structure 
if and only if $\phi$ is pseudo-Anosov.

We fix a surface $\varSigma$, and consider the set of dilatations of all pseudo-Anosov elements on $\varSigma$: 
$$ \mathrm{dil}(\varSigma)= \{\lambda(\phi)\ |\ \phi \in \mathrm{Mod}(\varSigma)\  \mbox{is pseudo-Anosov}\}. 
$$
This is a closed, discrete subset of ${\Bbb R}$, see \cite{Ivanov} for example. 
In particular, given a subgroup $G$ of $\mathrm{Mod}(\varSigma)$ which contains pseudo-Anosov elements,  
there exists a minimum $\delta(G)>1$ among dilatations of all pseudo-Anosov elements in $G$. 
Clearly we have $\delta(G) \ge \delta(\mathrm{Mod}(\varSigma))$. 
Let $\varSigma_{g,n}$ be a closed, orientable surface of genus $g$ removed $n$ punctures. 
We denote by $\delta_g$ and $\delta_{g,n}$, 
the minimal dilatations 
$ \delta(\mathrm{Mod}(\varSigma_g))$ and 
$\delta(\mathrm{Mod}(\varSigma_{g,n}))$ respectively.

By pioneering work of Penner \cite{Penner}, 
the asymptotic equality 
$$\log \delta_g  \asymp \tfrac{1}{g}$$ holds. 
Here 
$A \asymp B$ means that 
there exists a universal constant $c>0$ so that 
$\tfrac{A}{c} < B< c A$. 
In this case, we say that 
$A$ is comparable to $B$. 
Penner proves this claim by using his lower bound 
$\log \delta_{g,n} \ge \tfrac{\log 2}{12g + 4n-12}$ (\cite{Penner}). 
After work of Penner,  
one can ask the following.

\begin{ques}
\label{ques_Penner}
Which sequence of subgroups $G_{(g)}$'s of  $\mathrm{Mod}(\varSigma_{g})$ 
satisfies  $\log \delta(G_{(g)}) \asymp \tfrac{1}{g}$?  
\end{ques}

\noindent
Hironaka also studied Question~\ref{ques_Penner} in \cite{Hironaka2014}. 
To prove  $\log \delta(G_{(g)}) \asymp \tfrac{1}{g}$, 
thanks to the Penner's lower bound 
$\log \delta_g \ge \tfrac{\log 2}{12g-12}$, 
it suffices to construct a sequence of pseudo-Anosov elements $\phi_{(g)} \in G_{(g)}$  for $g \ge 2$ 
whose normalized entropies $\mathrm{Ent}(\phi_{(g)}) = (2g-2) \mathrm{ent}(\phi_{(g)})$ are uniformly bounded from above.

It is a result by Farb-Leininger-Margalit that 
the dilatation of any pseudo-Anosov element in
the Torelli group $\mathcal{I}(\varSigma_g) $ 
has a uniform lower bound (\cite[Theorem~1.1]{FLM}). 
See also  Agol-Leininger-Margalit~\cite{ALM}.  
On the other hand, the two subgroups $\mathcal{H}(\varSigma_g)$ and $\mathrm{Mod}({\Bbb H}_g)$ 
are examples of answers to Question~\ref{ques_Penner}. 
In fact, 
Hironaka-Kin prove in \cite[Theorem~1.1]{HK}, 
$$g \log \delta (\mathcal{H}(\varSigma_g)) <  \log (2+ \sqrt{3})  \approx 1.3169 
\hspace{2mm}\mbox{for}\  g \ge 2.$$ 
Hironaka proves in \cite[Section~3.1]{Hironaka2014}, 
\begin{equation}
\label{equation_Hir_bound}
\limsup_{g \to \infty} g \log \delta(\mathrm{Mod}({\Bbb H}_g)) \le  \log ( 33+ 8 \sqrt{17}) \approx 4.1894. 
\end{equation}
The main result of this paper is to prove that 
$\log \delta(\mathcal{H}({\Bbb H}_g))$  is still comparable to $1/g$.

\begin{thm}
\label{thm_asymp} 
We have 
$\log \delta(\mathcal{H}({\Bbb H}_g)) \asymp \frac{1}{g}$ 
 and $\log \delta(SH_{2n}) \asymp \frac{1}{n}$. 
\end{thm}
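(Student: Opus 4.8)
The plan is to establish the two comparabilities simultaneously, exploiting the Birman--Hilden type correspondence. By Theorems~\ref{thm_HiKi} and~\ref{thm_HiWi} we have $\mathcal{H}(\Bbb{H}_g)/\langle\iota\rangle \simeq SW_{2g+2}/\langle\Delta^2\rangle \simeq SH_{2g+2}$, and since $\iota$ is a central element of order two, a mapping class in $\mathcal{H}(\Bbb{H}_g)$ is pseudo-Anosov if and only if its image in $SH_{2g+2} \subset \mathrm{Mod}(\varSigma_{0,2g+2})$ is pseudo-Anosov. Moreover the dilatations agree: the invariant foliations of a hyperelliptic pseudo-Anosov on $\varSigma_g$ are $\mathcal{S}$-invariant and descend, under the branched cover $\varSigma_g \to \varSigma_g/\langle\mathcal{S}\rangle \cong \varSigma_{0,2g+2}$, to invariant foliations with the same stretch factor. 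Consequently $\delta(\mathcal{H}(\Bbb{H}_g)) = \delta(SH_{2g+2})$, so with $n = g+1$ the two assertions coincide up to the harmless reindexing $1/g \asymp 1/n$; it therefore suffices to prove $\log\delta(SH_{2n}) \asymp 1/n$.

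For the lower bound I would invoke Penner's estimate directly. Since $SH_{2n}$ is a subgroup of $\mathrm{Mod}(\varSigma_{0,2n})$, every pseudo-Anosov element of $SH_{2n}$ has dilatation at least $\delta(\mathrm{Mod}(\varSigma_{0,2n}))$, and Penner's bound $\log\delta_{g,n} \ge \frac{\log 2}{12g + 4n - 12}$ (\cite{Penner}) gives, at $g=0$ with $2n$ punctures, $\log\delta(SH_{2n}) \ge \frac{\log 2}{8n-12}$. Hence $\log\delta(SH_{2n}) \gtrsim 1/n$, which is the easy half.

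The substantive half is the matching upper bound: exhibiting pseudo-Anosov elements $\psi_{(n)} \in SH_{2n}$ whose normalized entropies $\mathrm{Ent}(\psi_{(n)}) = (2n-2)\,\mathrm{ent}(\psi_{(n)})$ stay uniformly bounded. I would produce these from a single source. One finds a hyperbolic, fibered $3$-manifold $M$ with $b_1(M) \ge 2$ together with an involution $\tau$ commuting with the suspension flow, so arranged that a distinguished fibered cone over a fibered face $F$ contains infinitely many primitive integral classes whose fibers are punctured spheres $\varSigma_{0,2n}$ and whose monodromies $\psi_{(n)}$ are forced, by $\tau$, to preserve the bridge system defining $SH_{2n}$. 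Since $b_1(M) \ge 2$, $F$ has positive dimension and such classes exist with $n \to \infty$. The function $\alpha \mapsto \|\alpha\|\,\log\lambda(\alpha) = \mathrm{Ent}(\alpha)$ is continuous on the open cone and homogeneous of degree $0$, by the Fried theory underlying Hironaka's method \cite{Hironaka2014}; choosing the classes within a compact sub-cone bounded away from $\partial F$ keeps $\mathrm{Ent}(\psi_{(n)})$ below the maximum of this function over that compact set. Then $\log\delta(SH_{2n}) \le \mathrm{Ent}(\psi_{(n)})/(2n-2) \lesssim 1/n$, completing the upper bound.

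The main obstacle is the construction of $M$ (equivalently, of the explicit family $\psi_{(n)}$) and the verification of three points for every $n$: that the chosen cohomology classes lie in one and the same fibered cone, so Fried's homogeneity and continuity apply uniformly; that the symmetry $\tau$ really descends each $\psi_{(n)}$ into the wicket group, i.e.\ that $\psi_{(n)}$ fixes the isotopy class of the $n$ bridge arcs that cut out $SH_{2n}$ inside $\mathrm{Mod}(\varSigma_{0,2n})$; and that the monodromies are genuinely pseudo-Anosov rather than reducible. I would search for $M$ among link complements carrying the hyperelliptic involution, in the spirit of the magic manifold and of the families behind Hironaka's bound~\eqref{equation_Hir_bound} and the Hironaka--Kin construction \cite{HK}. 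Should no sufficiently symmetric $M$ present itself, the fallback is an explicit Penner--Thurston construction on $\varSigma_{0,2n}$: choose two filling multicurves compatible with (permuted by) the wicket structure, form the resulting product of opposite Dehn twists, confirm it lies in $SH_{2n}$, and bound $\lambda(\psi_{(n)})$ by the Perron--Frobenius eigenvalue of the associated nonnegative transition matrix, showing it is $1 + O(1/n)$.
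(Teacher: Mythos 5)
Your reduction to the Hilden groups and your lower bound are sound and agree with the paper: Theorem~\ref{thm_HiKi} together with Lemma~\ref{lem_NTtype} gives $\delta(\mathcal{H}({\Bbb H}_g)) = \delta(SH_{2g+2})$ (this is Lemma~\ref{lem_NTtype2}), and Penner's inequality settles the easy half. Your upper-bound strategy is also, in outline, the one the paper follows: realize infinitely many Hilden-group monodromies as fibers lying in a single fibered cone of one hyperbolic $3$-manifold, then invoke homogeneity and continuity of the normalized entropy function on that cone. The gap is that you never produce the manifold or the family, and that production is the entire content of the proof. In the paper this is Proposition~\ref{prop_main}, proved in Section~\ref{section_proofs}: one starts from the explicit braid $w_6 \in SB_{(5)} \cap SW_6$, proves it is pseudo-Anosov with $\lambda(w_6)=\kappa$ by a train-track computation (Lemma~\ref{lem_6braid}), identifies ${\Bbb T}_{\underline{w_6}}$ with $S^3 \setminus L_0$ (Lemma~\ref{lem_L10n95}), manufactures the braids $w_{4n+8} \in SW_{4n+8}$ by disk twists so that all mapping tori are homeomorphic (Lemma~\ref{lem_B_Pro}), with membership in the wicket group checked directly from the braid words rather than deduced from a symmetry $\tau$ of the suspension flow, shows that $a_{4n+8}=(n+1)a_6+[\bar{F}]$ lies in the same fibered cone for \emph{every} $n \ge 0$ (Lemmas~\ref{lem_FiberedClass} and~\ref{lem_SameFace}; note this requires an explicit cross-section argument via Theorem~\ref{thm_Fried}(3), not merely the large-$n$ convergence of rays that your compact sub-cone remark supplies), and finally fills two punctures to obtain $w_{4n+6}$ (Lemma~\ref{lem_4n6}). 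Declaring this construction ``the main obstacle'' and indicating where one would search for it does not close it; as written, the upper bound is unproved.

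Moreover, your fallback would fail. A plain product $t_A t_B^{-1}$ of opposite multitwists along two filling multicurves can never have dilatation $1+O(1/n)$: in Penner's bigon-track coordinates the action is by a product of nonnegative integer matrices, each entrywise at least the identity, whose combined off-diagonal part $N_A+N_B$ is irreducible because $A \cup B$ fills; hence the transition matrix dominates $I+N_A+N_B$ entrywise, and its Perron--Frobenius eigenvalue is at least $1+\rho(N_A+N_B) \ge 2$, uniformly in $n$. This is precisely why Penner's own upper-bound construction composes the twists with a periodic map translating a fixed small support around the surface, and why the ``Penner sequences'' of \cite{Hironaka2014} have that translated-support form; any repair of your fallback needs such a periodic element, after which you would still have to verify that the resulting maps preserve the wicket structure, i.e.\ lie in $SH_{2n}$. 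So both branches of your upper-bound argument, as stated, leave the substantive half of Theorem~\ref{thm_asymp} unestablished.
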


\begin{prop}
\label{prop_main}
There exists a sequence of pseudo-Anosov braids $w_{2n} \in SW_{2n}$ $(n \ge 3)$  
such that 
$$\displaystyle\lim_{n \to \infty} n \log (\lambda(w_{2n})) = 2 \log \kappa,$$ 
where 
 $\kappa = \tfrac{1+ \sqrt{5}}{2}+ \tfrac{\sqrt{2+ 2 \sqrt{5}}}{2} \approx 2.89005$ equals the largest root of 
$$t^4-2t^3 -2t^2-2t+1 = (t^2 - (1+ \sqrt{5})t+1) (t^2 - (1- \sqrt{5})t+1).$$ 
\end{prop}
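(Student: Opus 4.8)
The plan is to realize the whole sequence $w_{2n}$ as monodromies of a single cusped hyperbolic $3$-manifold, so that computing $\lim_{n\to\infty} n\log\lambda(w_{2n})$ reduces to understanding the entropy function on one fibered face. First I would fix a small explicit wicket braid as a building block and assemble $w_{2n}$ from $n$ copies arranged in a rotationally symmetric pattern across the $2n$ strands; because the target constant $2\log\kappa$ is independent of $n$, the family should be \emph{self-similar}, in the sense that all mapping tori ${\Bbb T}_{w_{2n}}$ coincide with one fixed manifold $M$, with $w_{2n}$ arising as the monodromy of the fiber $\varSigma_{0,2n}$ dual to a sequence of cohomology classes $\alpha_n$ lying in the cone over a single fibered face $F$ of the Thurston norm ball of $M$. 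Membership $w_{2n}\in SW_{2n}$ I would verify by hand, writing $w_{2n}$ as a word in the standard wicket generators of Section~\ref{subsubsection_relation}.

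To show each $w_{2n}$ is pseudo-Anosov and to compute its dilatation, I would build a $w_{2n}$-invariant train track $\tau_n$ carrying the unstable foliation and record the transition matrix $M_n$ of the induced map. Checking that $M_n$ is Perron--Frobenius (irreducible and aperiodic) and that no power fixes a simple closed curve gives the pseudo-Anosov property by the standard train-track criterion, and then $\lambda(w_{2n})$ is the spectral radius of $M_n$, i.e. the largest root of $p_n(t)=\det(tI-M_n)$. Equivalently, once $M$ and $F$ are identified, McMullen's Teichm\"uller polynomial $\Theta_F$ computes all these dilatations at once: $\lambda(w_{2n})$ is the largest root of the one-variable specialization of $\Theta_F$ at $\alpha_n$. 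The fact that the quartic $t^4-2t^3-2t^2-2t+1$ is reciprocal fits this picture, since specializations of Teichm\"uller polynomials yielding dilatations are reciprocal polynomials.

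The asymptotics would then follow from Fried's theorem: the normalized entropy $\|\alpha\|\log\lambda(\alpha)$ is constant along rays in the cone over $F$ and extends continuously to the interior of $F$. Since $\alpha_n/\|\alpha_n\|$ converges projectively to an interior direction, the values $(2n-2)\log\lambda(w_{2n})$ converge to the limiting normalized entropy $4\log\kappa$, which is exactly the assertion $n\log\lambda(w_{2n})\to 2\log\kappa$. Concretely I would carry this out at the polynomial level: a suitable substitution of the form $t=x^{1/n}$ into $p_n(t)$ (equivalently, specializing $\Theta_F$ and letting $n\to\infty$) should collapse the family to the quartic above, whose factorization $(t^2-(1+\sqrt5)t+1)(t^2-(1-\sqrt5)t+1)$ isolates $\kappa=\tfrac{1+\sqrt5}{2}+\tfrac{\sqrt{2+2\sqrt5}}{2}$ as the unique root of modulus larger than $1$ and hence the Perron root governing the limit. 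Transported through the Birman--Hilden correspondence of Theorem~\ref{thm_HiKi} (which preserves dilatations under the branched cover), this supplies the upper bound needed for Theorem~\ref{thm_asymp}.

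The hard part will be the simultaneous constraints: keeping the whole family inside the wicket subgroup $SW_{2n}$ rather than the full braid group, while guaranteeing genuinely pseudo-Anosov behavior with a \emph{precisely} controlled dilatation, and at the same time pinning down the fibered face $F$ (or, in the elementary route, proving $M_n$ is Perron--Frobenius and that $p_n$ degenerates to the stated quartic uniformly in $n$). Extracting the exact constant $2\log\kappa$ and the exact quartic, rather than merely bounding the normalized entropy from above, is where the genuine work lies; the membership and pseudo-Anosov verifications, by contrast, should be routine once the building block and its invariant train track are chosen correctly.
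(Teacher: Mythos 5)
Your overall strategy---an explicit seed braid verified pseudo-Anosov by a train track with Perron--Frobenius transition matrix, realization of the family inside a single hyperbolic mapping torus, and the Fried--McMullen constancy of normalized entropy on rays of a fibered cone to extract the limit---is exactly the paper's strategy, but it can only reach \emph{half} of the required sequence, and the other half is where the genuine gap lies. The proposition demands a pseudo-Anosov braid $w_{2n}\in SW_{2n}$ for \emph{every} $n\ge 3$, and your plan rests entirely on the ``self-similarity'' assumption that all mapping tori ${\Bbb T}_{w_{2n}}$ coincide with one fixed manifold $M$. You give no construction realizing this assumption, and in the paper's construction it is false: only the braids with $2n\equiv 0 \pmod 4$ have mapping torus $S^3\setminus L_0$ (Lemma~\ref{lem_B_Pro}, proved by disk twists), and each disk twist changes the fiber class by adding a copy of the $6$-punctured-sphere fiber, $a_{4n+8}=(n+1)a_6+[\bar F]$, so the puncture count jumps by $4$ at each step. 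The intermediate braids $w_{4n+6}$ have mapping tori obtained from $S^3\setminus L_0$ by Dehn filling two cusps along slopes that vary with $n$, hence are not fibrations of a single fixed manifold, and no single-manifold fibered-face argument applies to them. To make your plan work for all $n$ you would need, in one fibered cone, integral classes of every even Thurston norm whose minimal representatives are all planar \emph{and} whose monodromies all descend to wicket braids; nothing in your proposal produces this or rules out precisely the step-$4$ obstruction that occurs in the only known construction.

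The paper closes this gap with a separate and substantial argument: it intersects the fiber $\varSigma_{w_{4n+8}}$ with an explicitly built branched surface to obtain the invariant train track $\tau_{4n+8}$ (Lemmas~\ref{lem_consequence} and \ref{lem_carry}), reads off that the unstable foliation of $\Phi_{w_{4n+8}}$ has an $(n+1)$-pronged singularity at the last puncture and an $(n+2)$-pronged singularity at the second-to-last puncture (Lemma~\ref{lem_foliation}), and concludes that these two punctures can be filled, so that $w_{4n+6}$ is pseudo-Anosov with $\lambda(w_{4n+6})=\lambda(w_{4n+8})$ (Lemma~\ref{lem_4n6}). Only then does the limit hold over the full sequence. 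Without this (or some substitute), your argument yields $\lim n\log\lambda(w_{2n})=2\log\kappa$ only along $2n\equiv 0\pmod 4$, which neither proves the proposition as stated nor suffices for the application $\log\delta(SH_{2n})\asymp \tfrac 1n$ for all $n$ in Theorem~\ref{thm_asymp}. Two smaller caveats: a Perron--Frobenius transition matrix alone does not certify the pseudo-Anosov property---you must also check that the complementary regions of the train track contain no bigons (the paper verifies they are once-punctured $1$-gons and unpunctured $3$-gons); and your polynomial-level degeneration is sound in spirit (the characteristic polynomials of Lemma~\ref{lem_charaPoly} do collapse, after the substitution $x=t^{n}$, onto $(x-1)^2(x^4-2x^3-2x^2-2x+1)$), but it is a computation you would still have to carry out for a family you have not actually constructed.
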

\noindent
The braids $w_{2n}$'s are written  by the standard generators of the spherical braid groups concretely 
(Section~\ref{section_proofs}). 
Theorem~\ref{thm_asymp} follows from Proposition~\ref{prop_main} 
as we explain now. 
We say that  a braid $b \in SB_{2g+2}$ is pseudo-Anosov 
if $\Gamma(b) \in \mathrm{Mod}(\varSigma_{0, 2g+2})$ is a pseudo-Anosov mapping class. 
In this case, the dilatation $\lambda(b)$ is defined by the dilatation $\lambda(\Gamma(b))$ 
of the pseudo-Anosov element $\Gamma(b)$. 
On the other hand, there exists a surjective homomorphism 
$Q: \mathcal{H}({\Bbb H}_g) \rightarrow SH_{2g+2}$ 
with the kernel $\langle \iota \rangle$ (Theorem~\ref{thm_HiKi}). 
If  $\phi \in \mathcal{H}({\Bbb H}_g)$ is pseudo-Anosov,  
then $Q(\phi) \in SH_{2g+2}$ is also pseudo-Anosov.   
If $\Phi: \varSigma_{0, 2g+2} \rightarrow \varSigma_{0, 2g+2}$ is a pseudo-Anosov homeomorphism 
which represents $Q(\phi)$, 
then one can take a pseudo-Anosov homeomoprhism $\tilde{\Phi}: \varSigma_g \rightarrow \varSigma_g$ 
which is a lift of $\Phi$ such that $\phi = [\tilde{\Phi}]$. 
Two pseudo-Anosov homeomorphisms $\Phi$ and $\tilde{\Phi}$ have the same dilatation, 
since their local dynamics are the same. 
Hence we have $\lambda(\phi)= \lambda(Q(\phi))$. 
In particular we have 
$\delta(\mathcal{H}({\Bbb H}_g)) = \delta(SH_{2g+2})$ for $g \ge 2$ 
(Lemma~\ref{lem_NTtype2}). 
Proposition~\ref{prop_main} says that there exists a sequence of pseudo-Anosov elements 
$\Gamma(w_{2n}) \in SH_{2n}$ 
whose normalized entropies $\mathrm{Ent}(\Gamma(w_{2n}))$ are uniformly bounded from above. 
Thus the same thing occurs in  $\mathcal{H}({\Bbb H}_g)$. 
See Section~\ref{subsection_HypHand}.

By Proposition~\ref{prop_main} together with 
$\delta(\mathcal{H}({\Bbb H}_g)) = \delta(SH_{2g+2})$, the following holds. 

\begin{thm}
\label{thm_SH}
We have 
$\displaystyle\limsup_{g \to \infty} g \log \delta(\mathcal{H}({\Bbb H}_g)) \le 2 \log \kappa \approx 2.12255$. 
\end{thm}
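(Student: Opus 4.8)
The plan is to derive Theorem~\ref{thm_SH} directly from Proposition~\ref{prop_main} together with the identity $\delta(\mathcal{H}({\Bbb H}_g)) = \delta(SH_{2g+2})$ established in Lemma~\ref{lem_NTtype2}. First I would recall that Proposition~\ref{prop_main} furnishes a sequence of pseudo-Anosov braids $w_{2n} \in SW_{2n}$ (for $n \ge 3$) satisfying
\begin{equation}
\label{eq_limit_recall}
\lim_{n \to \infty} n \log \lambda(w_{2n}) = 2 \log \kappa.
\end{equation}
Since each $w_{2n}$ lives in the wicket group $SW_{2n}$, its image $\Gamma(w_{2n})$ lies in the spherical Hilden group $SH_{2n} = \Gamma(SW_{2n})$ by Theorem~\ref{thm_HiWi}, and $\lambda(\Gamma(w_{2n})) = \lambda(w_{2n})$ by the definition of the dilatation of a braid. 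Hence each $\Gamma(w_{2n})$ is a bona fide pseudo-Anosov element of $SH_{2n}$ whose dilatation is $\lambda(w_{2n})$.

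Next I would invoke the minimality of $\delta$: since $\delta(SH_{2n})$ is the infimum of dilatations over \emph{all} pseudo-Anosov elements of $SH_{2n}$, the particular elements $\Gamma(w_{2n})$ give the upper bound $\delta(SH_{2n}) \le \lambda(w_{2n})$, so that $\log \delta(SH_{2n}) \le \log \lambda(w_{2n})$ for every $n \ge 3$. Multiplying by $n$ and taking $\limsup$ over $n$, the estimate in \eqref{eq_limit_recall} yields
\begin{equation}
\label{eq_limsup_SH}
\limsup_{n \to \infty} n \log \delta(SH_{2n}) \le 2 \log \kappa.
\end{equation}

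Finally, I would transfer \eqref{eq_limsup_SH} from the Hilden groups to the hyperelliptic handlebody groups. Writing $2n = 2g+2$, i.e. $n = g+1$, the identity $\delta(\mathcal{H}({\Bbb H}_g)) = \delta(SH_{2g+2})$ from Lemma~\ref{lem_NTtype2} gives $\log \delta(\mathcal{H}({\Bbb H}_g)) = \log \delta(SH_{2(g+1)})$, and since $g/(g+1) \to 1$ as $g \to \infty$, one has
\begin{equation*}
g \log \delta(\mathcal{H}({\Bbb H}_g)) = \frac{g}{g+1}\,(g+1) \log \delta(SH_{2(g+1)}),
\end{equation*}
whose $\limsup$ equals $\limsup_{n \to \infty} n \log \delta(SH_{2n}) \le 2 \log \kappa$ by \eqref{eq_limsup_SH}. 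Numerically $2 \log \kappa \approx 2.12255$, which is the asserted bound.

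The argument is essentially bookkeeping once Proposition~\ref{prop_main} is in hand, so I do not anticipate a genuine obstacle here; the only point requiring care is the index shift between $n$ and $g$ and the verification that the harmless factor $g/(g+1)$ does not affect the $\limsup$. The substantive work has been pushed entirely into the construction of the explicit braids $w_{2n}$ and the computation of their dilatations in Proposition~\ref{prop_main}, together with the group-theoretic identity of Lemma~\ref{lem_NTtype2}, both of which I am free to assume.
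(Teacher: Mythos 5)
Your proposal is correct and follows exactly the paper's route: the paper derives Theorem~\ref{thm_SH} in one line from Proposition~\ref{prop_main} combined with the identity $\delta(\mathcal{H}({\Bbb H}_g)) = \delta(SH_{2g+2})$ of Lemma~\ref{lem_NTtype2}, which is precisely your argument with the bookkeeping (the bound $\delta(SH_{2n}) \le \lambda(w_{2n})$ and the index shift $n = g+1$ with the harmless factor $g/(g+1)$) written out in detail.
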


\noindent
Since $\mathcal{H}({\Bbb H}_g)$ is the subgroup of $ \mathrm{Mod}({\Bbb H}_g)$, 
we have $\delta( \mathrm{Mod}({\Bbb H}_g)) \le \delta(\mathcal{H}({\Bbb H}_g))$. 
Comparing  Theorem~\ref{thm_SH} with  (\ref{equation_Hir_bound}), 
we find that Theorem~\ref{thm_SH} improves the previous upper bound of $\delta(\mathrm{Mod}({\Bbb H}_g))$ 
by Hironaka.  
In this sense, 
the sequence of pseudo-Anosov elements of $\mathcal{H}({\Bbb H}_g)$ 
used for the proof of Theorem ~\ref{thm_SH} 
is a new example for $ \mathrm{Mod}({\Bbb H}_g)$ 
whose normalized entropies are uniformly bounded from above.

Let us mention a property of the sequence of pseudo-Anosov braids $w_{2n}$'s in Proposition~\ref{prop_main} 
and give an outline of its proof. 
(See Section~\ref{section_proofs} for more details.) 
Let $L_0$ be a link with $3$ components as in Figure~\ref{fig_L10n95}. 
The mapping torus of $\Gamma(w_6) \in \mathrm{Mod}(\varSigma_{0,6})$ 
is homeomorphic to $S^3 \setminus L_0$, that is the complement of $L_0$ in a $3$-sphere $S^3$. 
Thus once we prove that $w_6$ is a pseudo-Anosov braid, 
it follows that $S^3 \setminus L_0$ is a hyperbolic fibered $3$-manifold.
The sequence $w_8, w_{10}, w_{12}, \cdots$ has a property such that 
if $k= 4n+8$, then the mapping torus of $\Gamma(w_{k})$ is homeomorphic to $S^3 \setminus L_0$, 
and if $k= 4n+6$, then the fibration of the mapping torus of $\Gamma(w_{k})$ comes from 
a fibration of $S^3 \setminus L_0$ by Dehn filling cusps along the boundary slopes of a fiber 
(which depends on $k$). 
A technique about {\it disk twists} (see Section~\ref{subsection_DiskTwists}) 
provides a method of constructing 
sequences of mapping classes on punctured spheres 
whose mapping tori are homeomorphic to each other. 
We use this technique for the construction of the sequence 
$w_8, w_{12},  \cdots, w_{4n+8},\cdots$ from the mapping torus of $\Gamma(w_6)$.  
We conclude that the braids 
$w_8, w_{12},  \cdots, w_{4n+8}, \cdots$ are pseudo-Anosov 
from the fact that $S^3 \setminus L_0$ is  hyperbolic. 
We point out that our method by using disk twists quite suit to construct elements in the Hilden groups 
whose mapping tori are homeomorphic to each other.  
Now let $\Phi= \Phi_6: \varSigma_{0,6} \rightarrow \varSigma_{0,6}$ 
be the pseudo-Anosov homeomorphism which represents $\Gamma(w_6)$, 
and let $\tau_6$ and $\mathfrak{p}_6: \tau_6 \rightarrow \tau_6$ 
be the invariant train track and the train track representative for $\Gamma(w_6)$ respectively. 
We find that $\lambda(w_6)$ is equal to the constant $\kappa$ in Proposition~\ref{prop_main}. 
An analysis by using the suspension flow $\Phi^t$ on $S^3 \setminus L_0$ 
and the train track representative $\mathfrak{p}_6: \tau_6 \rightarrow \tau_6$ 
tells us the dynamics of the pseudo-Anosov homeomorphism which represents $\Gamma(w_{4n+8})$ 
for each $n \ge 0$. 
In particular one can construct the train track representative 
$ \mathfrak{p}_{4n+8}: \tau_{4n+8} \rightarrow \tau_{4n+8}$ for $\Gamma(w_{4n+8})$ concretely. 
From the `shape' of the invariant train track $\tau_{4n+8}$, 
we see that $w_{4n+6}$ is a pseudo-Anosov braid with the same dilatation as $w_{4n+8}$. 
By a study of a particular fibered face for the exterior of the link $L_0$, 
we see that the normalized entropy of $\Gamma(w_{4n+8})$ 
converges to the one of $\Gamma(w_6)$, 
which implies that  Proposition~\ref{prop_main} holds.

From view point of fibered faces of fibered $3$-manifolds,  
 the sequence of mapping classes $\Gamma(w_{4n+8})$'s 
are obtained from a certain deformation of 
the monodromy $\Gamma(w_6)$ on  the $\varSigma_{0,6}$-fiber 
of the fibration on $S^3 \setminus L_0$. 
See also Hironaka~\cite{Hironaka2} and Valdivia~\cite{Valdivia} for other constructions 
in which fibered faces on hyperbolic $3$-manifolds are used crucially.

\begin{center}
\begin{figure}
\includegraphics[width=1.5in]{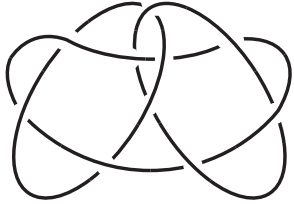}
\caption{Link $L_0$ which gets the name  $L10n95$ in the Thistlethwaite link table, 
see the Knot Atlas \cite{KnotAtlas}.}
\label{fig_L10n95}
\end{figure}
\end{center}

By using Penner's lower bound $\log \delta_{0,n} \ge \tfrac{\log 2}{4n-12}$, 
Hironaka-Kin prove that $\log \delta_{0,n} \asymp \tfrac{1}{n}$ (\cite{HK}). 
In fact, it is shown in \cite{HK} that  
the subgroup $\Gamma(SB_{(n-1)})$ of $\mathrm{Mod}(\varSigma_{0,n})$ 
which consists of  all mapping classes on an $(n-1)$-punctured disk $D_{n-1}$ 
satisfies $\log \delta( \Gamma(SB_{(n-1)})) \asymp \tfrac{1}{n}$. 
(See Section~\ref{subsection_Sbraid} for the definition of $SB_{(n-1)}$.)   
By Theorem~\ref{thm_asymp}, we have another example, namely the Hilden group $SH_{2n}$, 
with the same property, 
that is the asymptotic behavior of the minimal dilatation of $SH_{2n}$ 
is the same as that of the ambient group $\mathrm{Mod}(\varSigma_{0,2n})$. 
On the other hand, it is proved by Song that 
the dilatation of any pseudo-Anosov element of the pure braid groups 
has a uniform lower bound (\cite{Song}). 
We  ask the following. 

\begin{ques}
\label{ques_braid}
Which  sequence of subgroups $G_{(n)}$'s of  $\mathrm{Mod}(\varSigma_{0,n})$ 
satisfies  $\log \delta(G_{(n)}) \asymp \tfrac{1}{n}$?  
\end{ques}

The organization of this paper is as follows. 
In Section~\ref{section_Preliminaries}, 
we review basic facts on the Thurston norm and fibered faces on hyperbolic fibered $3$-manifolds. 
We recall the connection between the spherical braid groups and the mapping class groups 
on punctured spheres. 
Then we recall the definitions of  the Hilden groups and the wicket groups, 
and we describe a connection between them. 
We also introduce the hyperelliptic handlebody groups and give a relation 
between the hyperelliptic handlebody groups and the wicket groups. 
Lastly, we introduce the disk twists. 
In Section~\ref{section_proofs}, 
we prove Proposition~\ref{prop_main}. 
In Apppendix~\ref{section_appendix}, we prove some claims given in 
Sections~\ref{subsection_Hilden} and \ref{subsection_HypHand}, and 
we determine  a finite presentation of  $\mathcal{H}({\Bbb H}_g)$. 

\medskip

\noindent
{\bf Acknowledgements}: 
The authors thank 
Tara E.~Brendle and Masatoshi Sato 
for useful comments.

\section{Preliminaries} 
\label{section_Preliminaries}

\subsection{Mapping class groups} 
Let $\varSigma$ be a compact, connected, orientable surface removed the set of finitely many points $P$  
in its interior. 
The mapping class group $\mathrm{Mod}(\varSigma)$ 
is the group of isotopy classes of homeomorphisms on $\varSigma$ 
which fix both $P$ and the boundary $\partial \varSigma$ as {\it sets}. 
We apply elements of $\mathrm{Mod}(\varSigma)$  from right to left.

\subsection{Thurston norm, fibered faces and entropy functions}

Let $M$ be an oriented hyperbolic $3$-manifold possibly with boundary. 
We recall some properties of the Thurston norm  
$\|\cdot\|: H_2(M, \partial M; {\Bbb R}) \rightarrow {\Bbb R}$. 
For more details, see  \cite{Thurston1} by Thurston. 
See also  \cite[Sections~5.2, 5.3]{Calegari} by Calegari. 

Let $F$ be a finite union of oriented, connected surfaces. 
We define $\chi_-{(F)}$ to be  
$$\chi_-(F)= \sum_{F_i \subset F} \max \{0, -\chi (F_i)\},$$ 
where $F_i$'s are the connected components of $F$. 
The Thurston norm $\|\cdot\|$ is defined for an integral class $a \in H_2(M, \partial M; {\Bbb Z})$ by 
$$\|a\|= \min_{F} \{\chi_-(F)\ | \ a= [F] \},$$ 
where the minimum ranges over all oriented surfaces $F$ embedded in $M$. 
A surface $F$ which realizes the minimum is called a 
{\it minimal representative} or {\it norm-minimizing} of $a$.  
Then $\|\cdot\|$ defined on all integral classes admits a unique continuous extension 
$\|\cdot\|: H_2(M, \partial M; {\Bbb R}) \rightarrow {\Bbb R}$ 
which is linear on rays through the origin. 
A significant property of $\|\cdot\|$ is that 
the unit ball $U_M  $ with respect to $\|\cdot\|$ is a finite-sided polyhedron. 

We take a top dimensional face $\Omega$ on the boundary $\partial U_M$. 
Let $C_{\Omega}$ be the cone over $\Omega$ with the origin, and 
let $int(C_{\Omega})$ be its open cone, that is 
the interior of $C_{\Omega}$. 
When $M$ is a hyperbolic fibered $3$-manifold, 
the Thurston norm provides deep information about fibrations on $M$.

\begin{thm}[Thurston~\cite{Thurston1}]
\label{thm_Thurston}
Suppose that $M$ fibers over the circle $S^1$ with fiber $F$. 
Then there exists a top dimensional face $\Omega$ on  $\partial U_M$ so that 
$[F] \in int(C_\Omega)$. 
Moreover given any integral class $a \in int(C_{\Omega})$, 
its minimal representative $F_a$  becomes a fiber of a fibration on $M$. 
\end{thm}

\noindent
Such a face $\Omega$ and such an open cone $int(C_{\Omega})$ are 
called a {\it fibered face} and {\it fibered cone} respectively, 
and an integral class $a \in int(C_{\Omega})$ 
is called a {\it fibered class}. 

Now we take any primitive fibered class $a \in int(C_{\Omega})$. 
The minimal representative $F_a$ is a connected fiber of the fibration associated to $a$. 
If we let $\Phi_a: F_a \rightarrow F_a$ be the monodromy of this fibration, 
then the mapping class $\phi_a= [\Phi_a]$  is necessarily pseudo-Anosov, 
since $M$ is hyperbolic. 
One can define the dilatation $\lambda(a)$ and entropy $\mathrm{ent}(a)$ 
to be the dilatation and entropy of pseudo-Anosov $\phi_a$. 
The entropy function defined on primitive fibered classes $a$'s  can be extended to 
the entropy function on rational classes 
by homogeneity. 
An important property of such entropies, studied by Fried, Matsumoto and McMullen 
is that the function $a \mapsto \mathrm{ent}(a)$ defined for rational classes $a \in int(C_{\Omega})$ 
extends to a real analytic convex function on the fibered cone $int(C_{\Omega})$, 
see \cite{McMullen2} for example. 
Moreover 
 the {\it normalized entropy function} 
$$\mathrm{Ent}= \|\cdot\| \mathrm{ent}: int(C_{\Omega}) \rightarrow {\Bbb R}$$ 
is constant on each ray in $int(C_{\Omega})$ through the origin.

Since $M$ fibers over $S^1$ with fiber $F$, 
$M$ is homeomorphic to a mapping torus  ${\Bbb T}_{[\Phi]}$, 
where $\Phi: F \rightarrow F$ is the monodromy of the fibration associated to $[F] \in int(C_{\Omega})$. 
We may assume that $\Phi: F \rightarrow F$ is a pseudo-Anosov homeomorphism 
with the stable and unstable foliations $\mathcal{F}^s$ and $\mathcal{F}^u$.  
A surface $F'$ is called a {\it cross-section} to the suspension flow $\Phi^t$ on $M$ 
if $F'$ is transverse to $\Phi^t$ and $F'$ intersects every flow line. 

Let $J_1$ and $J_2$ be embedded arcs in $M$ 
which are transverse to $\Phi^t$. 
We say that $J_1$ {\it is connected to} $J_2$ if there exists a positive continuous function 
$\mathfrak{g}: J_1 \rightarrow {\Bbb R}$ which satisfies the following. 
For any $x \in J_1$, we have 
$\Phi^{\mathfrak{g}(x)} \in J_2$ and 
$\Phi^{t}(x) \notin J_2$ for $0 < t < \mathfrak{g}(x)$. 
Moreover the map $: J_1 \rightarrow J_2$ given by $x \mapsto \Phi^{\mathfrak{g}(x)}(x)$ is a homeomorphism. 
In this case, we let 
$$[J_1, J_2]= \{\Phi^{t}(x)\ |\ x \in J_1, 0 \le t \le \mathfrak{g}(x)\},$$ 
and we call $[J_1, J_2]$ a {\it flowband}. 
We use flowbands in the proof of Proposition~\ref{prop_main}.

\begin{thm}[Fried~\cite{Fried} for (1)(2), Thurston~\cite{Thurston1} for (3)] 
\label{thm_Fried}
Let $\Phi: F \rightarrow F$, $M \simeq {\Bbb T}_{[\Phi]}$ and $\Omega$ be as above. 
Let $\widehat{\mathcal{F}^s}$ and $\widehat{\mathcal{F}^u}$ 
denote the suspensions of $\mathcal{F}^s$ and $\mathcal{F}^u$ by $\Phi$ 
in $M \simeq {\Bbb T}_{[\Phi]}$. 
For any minimal representative $F_a$ of any fibered class $a \in int(C_{\Omega})$, 
we can modify $F_a$ by an isotopy which satisfies the following. 
\begin{enumerate}
\item[(1)] 
$F_a$ is transverse to $\Phi^t$, 
and the first return map $:F_a \rightarrow F_a$ is precisely the pseudo-Anosov monodromy $\Phi_a: F_a \rightarrow F_a$ 
of the fibration on $M$ associated to $a$. 
Moreover $F_a$ is unique up to isotopy along flow lines. 

\item[(2)] 
The stable and unstable foliations of the pseudo-Anosov homeomorphism  $\Phi_a$ are given by 
$\widehat{\mathcal{F}^s} \cap F_a$ and $\widehat{\mathcal{F}^u} \cap F_a$ respectively.  

\item[(3)] 
If $a' \in H_2(M, \partial M; {\Bbb R})$ is represented by some cross-section  to $\Phi^t$, 
then $a' \in int(C_{\Omega})$. 
\end{enumerate}
\end{thm}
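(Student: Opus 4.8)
The plan is to deduce all three parts from Fried's theory of cross-sections to the \emph{single} suspension flow $\Phi^t$ on $M$, using Poincaré--Lefschetz duality $H_2(M, \partial M;{\Bbb R}) \cong H^1(M;{\Bbb R})$ to pass freely between a homology class $a$ and a dual cohomology class $u_a$. The central object is the set $D \subset H^1(M;{\Bbb R})$ of classes representable by a closed $1$-form $\omega$ with $\omega(\tfrac{\partial}{\partial t}) > 0$ everywhere. I would first set up the dictionary between integral classes of $D$ and cross-sections of $\Phi^t$, which yields (1) and (2), and then identify $D$ with the fibered cone $int(C_{\Omega})$, which supplies the remaining content of (1) together with (3).

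First I would establish the cross-section dictionary. Given an integral $u \in D$ with positive representative $\omega$, a primitive $M \to S^1 = {\Bbb R}/{\Bbb Z}$ of $\omega$ is a submersion whose fibers are transverse to $\Phi^t$ and meet every flow line (since $\int \omega$ grows along each orbit), so each fiber is a cross-section whose first return map is exactly the return map of $\Phi^t$; conversely, integrating a local time coordinate along the flow near a cross-section produces such an $\omega$, and two cross-sections in the same class are isotopic along flow lines. Applied to the original fibration this gives $a_0 = [F] \in D$, via the form $\omega_0 = dt$ with $\omega_0(\tfrac{\partial}{\partial t}) = 1$. For an integral $a \in D$ the resulting cross-section represents $a$, hence is isotopic to the minimal representative $F_a$ (fibers in a fixed fibered class being unique up to isotopy); transporting this isotopy modifies $F_a$ into a cross-section transverse to $\Phi^t$ whose first return map is the monodromy $\Phi_a$, with the flow-isotopy uniqueness giving the last sentence of (1). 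Part (2) then follows because $\widehat{\mathcal{F}^s}$ and $\widehat{\mathcal{F}^u}$ are the flow-invariant weak stable and unstable $2$-foliations of $\Phi^t$: their transverse intersections with $F_a$ are $\Phi_a$-invariant $1$-dimensional foliations, and the exponential contraction (resp. expansion) of the transverse measures induced by $\mu^s$ (resp. $\mu^u$) along $\Phi^t$ identifies them as the stable and unstable foliations of the pseudo-Anosov $\Phi_a$.

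The essential remaining step is the identification $D = int(C_{\Omega})$, which I would obtain through Fried's duality criterion: $u \in D$ if and only if $u$ is positive on every homology direction of $\Phi^t$, these directions forming the compact, projectivized set of limits of homology classes of longer and longer nearly closed orbits. Openness and convexity of $D$ are immediate, since positivity on a compact set is an open, convex condition. For (3), a cross-section $F'$ to $\Phi^t$ meets every closed orbit positively, so its dual class pairs positively with every closed-orbit class and hence, passing to limits, with every homology direction; thus $[F'] \in D$, and once $D = int(C_{\Omega})$ is known this gives $a' = [F'] \in int(C_{\Omega})$. The reverse inclusion $int(C_{\Omega}) \subseteq D$ is exactly what allows the dictionary above to be applied to an \emph{arbitrary} fibered class $a \in int(C_{\Omega})$ in the statement of (1).

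The main obstacle is precisely this duality, namely Fried's computation that the homology directions of a pseudo-Anosov suspension flow span a cone whose polar dual is the fibered cone $int(C_{\Omega})$ containing $[F]$. Concretely one must prove that a cohomology class lies in $int(C_{\Omega})$ if and only if it evaluates positively on all asymptotic cycles of $\Phi^t$; the hard direction forces a class positive on the homology directions into the single face $\Omega$ rather than merely into some fibered face. Making this precise requires relating the Thurston norm $\|a\|$ of a cross-section class to the flow's return and intersection data, i.e. computing $\chi_-(F_a)$ from the singular foliations $\widehat{\mathcal{F}^{s}}, \widehat{\mathcal{F}^{u}}$ and their transverse measures, and this asymptotic-cycle/Thurston-norm duality is the technical heart supplied respectively by Fried and by Thurston.
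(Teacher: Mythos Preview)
The paper does not prove Theorem~\ref{thm_Fried}; it is quoted as a background result, with parts (1)--(2) attributed to Fried and part (3) to Thurston, and no argument is supplied beyond the citations. So there is no proof in the paper to compare your proposal against.

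That said, your outline is a faithful sketch of Fried's original approach: the passage to $H^1$ via Poincar\'e--Lefschetz duality, the cone $D$ of classes represented by closed $1$-forms positive on the generating vector field, the dictionary between integral points of $D$ and cross-sections to $\Phi^t$, and the identification of $D$ with $int(C_{\Omega})$ via positivity on homology directions are exactly the ingredients Fried uses. Your treatment of (2) by restricting the suspended foliations is also the standard mechanism. The one place where your write-up is more of a signpost than an argument is, as you yourself flag, the equality $D = int(C_{\Omega})$; you correctly isolate the nontrivial direction (why positivity on asymptotic cycles forces the class into the \emph{particular} face $\Omega$ containing $[F]$), and this is indeed where the real work in Fried's paper lies, together with Thurston's characterization of norm-minimizing surfaces as fibers. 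If you intend this as a genuine proof rather than a roadmap, that step needs to be filled in; otherwise your summary accurately reflects the cited sources.
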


\subsection{Spherical braid groups}
\label{subsection_Sbraid}

Let $SB_m$ be the spherical braid group with $m$ strings. 
We depict braids vertically in this paper. 
We define the product of braids as follows. 
Given $b, b' \in SB_m$, 
we stuck $b$ on $b'$, and concatenate the bottom $i$th endpoint of $b$ 
with the top $i$th endpoint of $b'$ for each $1 \le i \le m$. 
Then we get $m$ strings, and 
the product $bb' \in SB_m$ is the resulting braid (after rescaling such $m$ strings), see Figure~\ref{fig_product}. 
We often label the numbers $1, \cdots, m$ (from left to right)  at the bottom of a given braid. 
Let $\sigma_i$ denote a braid of $SB_m$ 
obtained by crossing the $i$th string under the $(i+1)$st string, 
see Figure~\ref{fig_halftwist}(1).  
(Here the $i$th string means  the string labeled $i$ at the bottom.) 
It is well-known that  $SB_m$ is a group generated by 
$\sigma_1, \sigma_2, \cdots, \sigma_{m-1}$, 
and its relations are given by 
\begin{enumerate}
\item[(1)] 
$\sigma_i \sigma_j = \sigma_j \sigma_i$  \hspace{2mm}if $|i - j| \ge 2$, 

\item [(2)] 
$ \sigma_i \sigma_{i+1} \sigma_i = \sigma_{i+1} \sigma_i \sigma_{i+1}$ \hspace{2mm} for $i = 1, \cdots, m-2$, 

\item[(3)] 
$ \sigma_1 \sigma_2 \cdots \cdots \sigma_{m-2} \sigma_{m-1}^2 \sigma_{m-2} \cdots \sigma_2 \sigma_1 = 1.$
\end{enumerate}

\begin{center}
\begin{figure}
\includegraphics[width=3in]{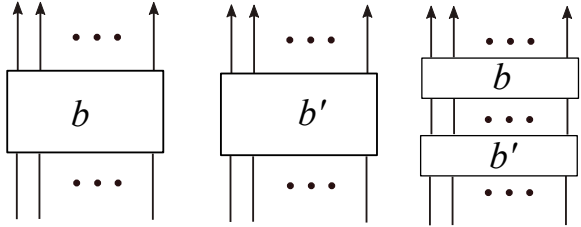}
\caption{Braids $b$, $b'$ and $bb'$.} 
\label{fig_product}
\end{figure}
\end{center}

We recall a connection between $SB_m$ and $\mathrm{Mod}(\varSigma_{0,m})$. 
Let $c_1, \cdots, c_m$ be the punctures of $\varSigma_{0,m}$. 
Let $h_i$ be the left-handed half twist 
about the arc between the $i$th and $(i+1)$st punctures $c_i$ and $c_{i+1}$, 
see Figure~\ref{fig_halftwist}(2). 
We define a homomorphism 
$$\Gamma: SB_m \rightarrow \mathrm{Mod}(\varSigma_{0,m})$$
which sends $\sigma_i$ to $h_i$ 
for $ i \in \{1, \cdots, m-1\}$. 
Since $\mathrm{Mod}(\varSigma_{0,m})$ is generated by $h_1, \cdots, h_{m-1}$, 
$\Gamma$ is surjective. 
If we let 
$$\Delta= \Delta_m= (\sigma_1 \sigma_2 \cdots \sigma_{m-1}) 
(\sigma_1 \sigma_2 \cdots \sigma_{m-2}) \cdots (\sigma_1 \sigma_2) \sigma_1$$
which is a half twist braid, then 
the kernel of $\Gamma$ is isomorphic to ${\Bbb Z}/{2{\Bbb Z}}$ 
which is generated by a full twist braid $\Delta^2$. 
Thus 
$$SB_m / \langle \Delta^2 \rangle \simeq \mathrm{Mod}(\varSigma_{0,m}).$$
Given a braid $b \in SB_m$, 
the mapping torus ${\Bbb T}_{\Gamma(b)}$ of $\Gamma(b)$ 
is denoted by  ${\Bbb T}_{b}$ for simplicity.

\begin{rem}
\label{rem_braid_convention} 
Each $m$-braid as in Figure~\ref{fig_halftwist}(1) with the orientation 
from the bottom of strings to the top 
induces the motion of $m$ points on the sphere. 
This gives rise to the above  homomorphism $\Gamma$, 
which maps $\sigma_i$ to $h_i$. 
In this paper, we denote the braid in Figure~\ref{fig_halftwist}(1)  by $\sigma_i$. 
\end{rem}

We say that a braid $b \in SB_{m}$ is {\it pseudo-Anosov} 
if $\Gamma(b)$ is a pseudo-Anosov mapping class. 
In this case, we define the dilatation $\lambda(b)$ of $b$ to be the dilatation 
$\lambda(\Gamma(b))$. 
Also, we let $\Phi_{b}: \varSigma_{0,m} \rightarrow \varSigma_{0,m}$ be 
the pseudo-Anosov homeomorphism which represents $\Gamma(b)$, 
and let $\mathcal{F}_b$ be the unstable foliation for $\Phi_b$.

Let $SB_{(m-1)}$ be the subgroup of $SB_m$ which is generated by $\sigma_1, \cdots, \sigma_{m-2}$. 
(Hence a braid $b \in SB_{(m-1)}$ is represented by a word without $ \sigma_{m-1}^{\pm 1}$.)
As we will see in Section~\ref{subsection_relation}, 
 $SB_{(m-1)}$  is closely related to the $(m-1)$-braid group $B_{m-1}$.

\begin{center}
\begin{figure}
\includegraphics[width=3.3in]{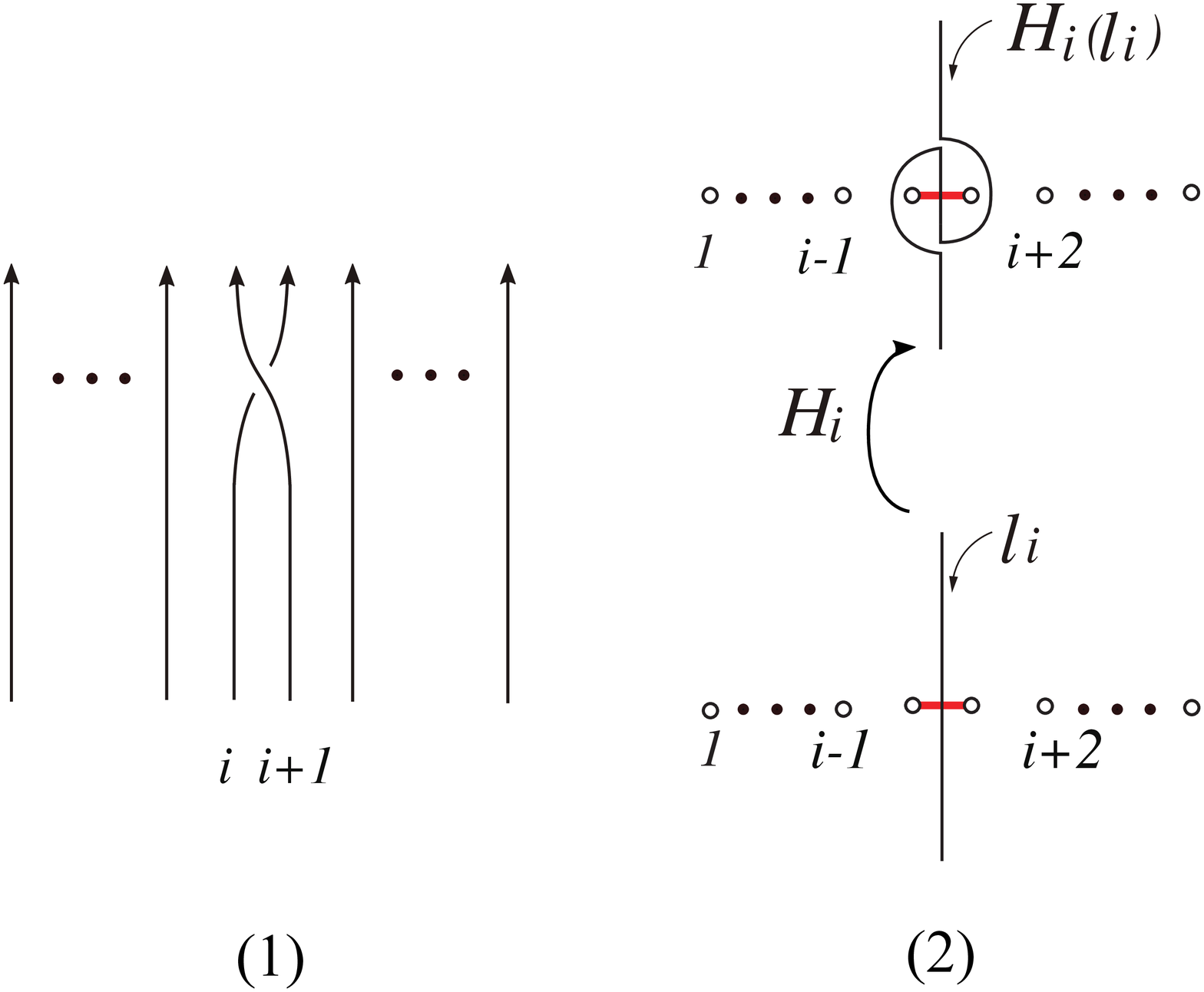}
\caption{(1) $\sigma_i $. 
(2) Action of a representative $H_i \in h_i $ on $\ell_i$, 
where $\ell_i$ is a vertical arc which passes through the horizontal arc between the punctures $c_i$ and $c_{i+1}$, 
see Remark~\ref{rem_convention}.} 
\label{fig_halftwist}
\end{figure}
\end{center}

\begin{center}
\begin{figure}
\includegraphics[width=4in]{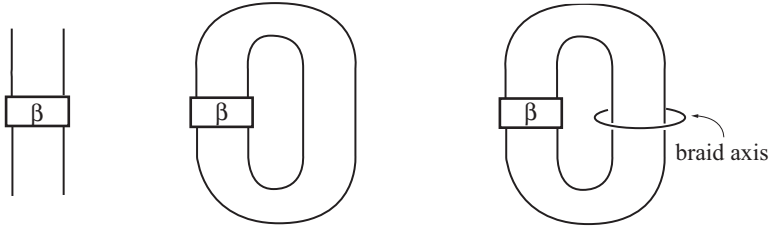}
\caption{Braid $\beta$, closure $\mathrm{cl}(\beta)$, and 
braided link $\mathrm{br}(\beta)$ from left to right.}  
\label{fig_braid_axis}
\end{figure}
\end{center}

\subsection{Braid groups} 
\label{subsection_relation}

We recall a connection between the two groups $(m-1)$-braid group $B_{m-1}$ 
on a disk and the mapping class group 
$\mathrm{Mod}(D_{m-1})$, 
where $D_{m-1}$ is a disk with $m-1$ punctures $c_1, \cdots, c_{m-1}$. 
By abusing notations, 
we denote by $\sigma_i$,  the braid of $B_{m-1}$ 
obtained by crossing the $i$th string under the $(i+1)$st string. 
The braid group $B_{m-1}$ with $m-1$ strings is the group 
generated by $\sigma_1, \cdots, \sigma_{m-2}$ 
having the following relations. 
\begin{enumerate}
\item[(1)] 
$\sigma_i \sigma_j = \sigma_j \sigma_i$  \hspace{2mm}if $|i - j| \ge 2$, 

\item [(2)] 
$ \sigma_i \sigma_{i+1} \sigma_i = \sigma_{i+1} \sigma_i \sigma_{i+1}$ \hspace{2mm} for $i = 1, \cdots, m-3$. 
\end{enumerate}
Abusing notations again, we denote by $h_i$,
 the left-handed half twist about the arc between the $i$th and  $(i+1)$st punctures of $D_{m-1}$. 
We also use  $\Gamma$ for the surjective homomorphism 
$$\Gamma: B_{m-1} \rightarrow \mathrm{Mod}(D_{m-1})$$
which sends $\sigma_i$ to $h_i$ for $i \in \{1, \cdots, m-2\}$. 
In this case, the kernel of $\Gamma$ 
is an infinite cyclic group generated by the full twist braid $\Delta^2 = \Delta_{m-1}^2$.

We have a homomorphism 
\begin{eqnarray*}
c: \mathrm{Mod}(D_{m-1}) &\rightarrow& \mathrm{Mod} (\varSigma_{0, m})
\\
h_i &\mapsto& h_i
\end{eqnarray*}
which is induced by the map 
that sends the boundary of the disk to 
the $m$th puncture  of $\varSigma_{0, m}$. 
Observe that 
$$c(\Gamma(B_{m-1})) = c(\mathrm{Mod}(D_{m-1}))= \Gamma(SB_{(m-1)}).$$
Given $\beta \in B_{m-1}$, 
we denote the mapping torus 
${\Bbb T}_{c( \Gamma(\beta))}$ of $c( \Gamma(\beta))$ 
by  ${\Bbb T}_{\beta}$ for simplicity. 
Let $\mathrm{cl}(\beta)$ be the closure of $\beta$ (or the closed braid of $\beta$). 
We have ${\Bbb T}_{\beta}  \simeq S^3 \setminus \mathrm{br}(\beta)$, 
(that is ${\Bbb T}_{\beta}$ is homeomorphic to $S^3 \setminus \mathrm{br}(\beta)$)  
where $\mathrm{br}(\beta)$ is the {\it braided link} of $\beta$ 
which is a union of $\mathrm{cl}(\beta)$ and  its {\it braid axis}, 
see Figure~\ref{fig_braid_axis}. 

\begin{rem}
\label{rem_convention}
Recall  that we apply elements of the mapping class groups from right to left. 
This convention together with the homomorphism $\Gamma$ from $B_{m-1}$ to 
$\mathrm{Mod}(D_{m-1})$ 
gives rise to an orientation of strings of $\beta$ from the bottom to the top, 
which is compatible with the direction of the suspension flow on 
${\Bbb T}_{\beta}= S^3 \setminus \mathrm{br}(\beta)$. 
\end{rem}

We say that $\beta \in B_{m-1}$ is {\it pseudo-Anosov} 
if $c(\Gamma (\beta))$ is pseudo-Anosov.  
In this case, 
we define the dilatation $\lambda(\beta)$ of $\beta$ to be the dilatation 
$\lambda(c (\Gamma(\beta)))$.

By definition, an $m$-braid $b \in SB_{(m-1)}$ 
is represented by a word without $\sigma_{m-1}^{\pm 1}$. 
Removing the last string of $b$, 
we get an $(m-1)$-braid on a sphere. 
If we regard such a braid as the one on a disk, 
we have an $(m-1)$-braid $\underline{b} $ 
with the same word as $b$. 
By definition of $\underline{b}$, we have 
$$c(\Gamma(\underline{b}))= \Gamma(b).$$ 
Since 
${\Bbb T}_{\underline b} = {\Bbb T}_{c(\Gamma(\underline{b}))} = {\Bbb T}_{\Gamma(b)} = {\Bbb T}_{b}$, 
we have  $ {\Bbb T}_{\underline{b}} = {\Bbb T}_b $. 
We  get the following lemma immediately.

\begin{lem}
\label{lem_bar}
A braid $b \in SB_{(m-1)}$ is pseudo-Anosov if and only if 
$\underline{b} \in B_{m-1}$ is pseudo-Anosov. 
In this case, the equality $\lambda(b) = \lambda(\underline{b})$ holds, and 
${\Bbb T}_b (= {\Bbb T}_{\underline{b}})$ is a hyperbolic fibered $3$-manifold. 
\end{lem}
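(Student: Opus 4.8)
The plan is to reduce the whole statement to the single identity $c(\Gamma(\underline{b})) = \Gamma(b)$ in $\mathrm{Mod}(\varSigma_{0,m})$, which has already been recorded in the paragraph preceding the lemma. First I would recall the two relevant definitions. A braid $b \in SB_{(m-1)}$ is pseudo-Anosov precisely when $\Gamma(b) \in \mathrm{Mod}(\varSigma_{0,m})$ is pseudo-Anosov, and then $\lambda(b)$ is by definition $\lambda(\Gamma(b))$. A braid $\underline{b} \in B_{m-1}$ is pseudo-Anosov precisely when $c(\Gamma(\underline{b})) \in \mathrm{Mod}(\varSigma_{0,m})$ is pseudo-Anosov, and then $\lambda(\underline{b})$ is by definition $\lambda(c(\Gamma(\underline{b})))$. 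Because $c$ sends each half twist $h_i$ to $h_i$, while $\underline{b}$ is represented by the very same word in $\sigma_1, \dots, \sigma_{m-2}$ as $b$, the two mapping classes $\Gamma(b)$ and $c(\Gamma(\underline{b}))$ literally coincide.

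Given this coincidence, both the equivalence and the equality of dilatations drop out immediately, with no dynamical comparison required: $b$ is pseudo-Anosov $\iff$ $\Gamma(b)$ is pseudo-Anosov $\iff$ $c(\Gamma(\underline{b}))$ is pseudo-Anosov $\iff$ $\underline{b}$ is pseudo-Anosov, and in that case $\lambda(b) = \lambda(\Gamma(b)) = \lambda(c(\Gamma(\underline{b}))) = \lambda(\underline{b})$. The point is that $\lambda(b)$ and $\lambda(\underline{b})$ are, by their definitions, the dilatation of one and the same mapping class on $\varSigma_{0,m}$, so there is nothing to compute.

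For the last assertion I would argue as follows. The mapping tori satisfy ${\Bbb T}_{\underline{b}} = {\Bbb T}_{c(\Gamma(\underline{b}))} = {\Bbb T}_{\Gamma(b)} = {\Bbb T}_b$, again by the identity above, so it suffices to treat ${\Bbb T}_b$. By construction ${\Bbb T}_b$ is the mapping torus of $\Gamma(b)$, hence fibers over $S^1$ with fiber $\varSigma_{0,m}$; equivalently, through the braided-link description it is homeomorphic to $S^3 \setminus \mathrm{br}(\underline{b})$, so it is visibly fibered. In the pseudo-Anosov case the monodromy $\Gamma(b)$ is pseudo-Anosov, so Thurston's hyperbolization theorem for surface bundles over the circle (cited earlier in the excerpt) shows that ${\Bbb T}_b$ admits a hyperbolic structure. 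Combining the two, ${\Bbb T}_b (= {\Bbb T}_{\underline{b}})$ is a hyperbolic fibered $3$-manifold.

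As for the main obstacle: there is none of a substantive nature, which is exactly why the text calls the lemma immediate. The only step deserving care is the bookkeeping identity $c(\Gamma(\underline{b})) = \Gamma(b)$ — one must confirm that passing from the spherical picture to the disk picture (deleting the last string and capping the boundary to the $m$th puncture) does not alter the induced mapping class. Since $c$ and the two instances of $\Gamma$ are set up compatibly on generators, this is a routine check on words, and the rest of the lemma is a formal consequence together with a single application of the hyperbolization theorem.
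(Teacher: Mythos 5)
Your proposal is correct and follows exactly the paper's own route: the paper establishes $c(\Gamma(\underline{b})) = \Gamma(b)$ and ${\Bbb T}_{\underline{b}} = {\Bbb T}_b$ in the paragraph preceding the lemma and then declares the lemma immediate, which is precisely your definitional unwinding plus one application of Thurston's hyperbolization theorem. No gaps; your write-up merely makes explicit what the paper leaves implicit.
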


\subsection{Hilden groups and wicket groups}
\label{subsection_Hilden}

\subsubsection{Relations between Hilden groups and wicket groups} 
\label{subsubsection_relation}

First of all, we define a subgroup of $\mathrm{Mod}(\varSigma_{0,2n})$ 
which was introduced by Hilden~\cite{Hilden}. 
Let $A_1, \cdots, A_{n}$ be $n$ disjoint trivial arcs 
properly embedded in a unit ball $D^3$ 
as in Figure~\ref{fig_action}(1). 
More precisely, each $A_i$ is unknotted and the union 
${\bf A} = {\bf A}_n= A_1 \cup \cdots \cup A_{n}$ is unlinked. 
Such  $A_i$'s are called  {\it wickets}. 
Let $\mathrm{Homeo}_+(D^3, {\bf A})$ be 
the set of orientation preserving homeomorphisms on $D^3$ 
preserving ${\bf A}$ setwise. 
For each $\Psi \in \mathrm{Homeo}_+(D^3, {\bf A})$, 
we have  the restriction 
$$\Psi|_{\partial D^3}: 
(\partial D^3, \partial {\bf A}) \rightarrow 
(\partial D^3, \partial {\bf A})$$
which is an orientation preserving homeomorphism on a $2$-sphere $S^2 = \partial D^3$ 
preserving $2n$ points of $ \partial {\bf A}$ setwise. 
Its isotopy class $[\Psi|_{\partial D^3}]$ gives rise to an element of $\mathrm{Mod}(\varSigma_{0,2n})$. 
We define a homomorphism 
$$\mathrm{Mod}(D^3, {\bf A}) \rightarrow \mathrm{Mod}(\varSigma_{0,2n})$$ 
which sends a mapping class $[\Psi]$ of  $\Psi \in \mathrm{Homeo}_+(D^3, {\bf A})$ 
to the mapping class $[\Psi|_{\partial D^3}]$. 
This homomorphism is injective,  
see for example \cite[p.484]{Brendle-Hatcher} or 
\cite[p.157]{Hilden}. 
We prove this claim  in Appendix~\ref{section_appendix} for the convenience of readers, 
see Proposition~\ref{prop:uniqueness}.

The group $\mathrm{Mod}(D^3, {\bf A})$ or its homomorphic image 
into $ \mathrm{Mod}(\varSigma_{0,2n})$ is called the  {\it (spherical) Hilden group} $SH_{2n}$. 
Let us describe $SH_{2n}$ by using certain subgroup of the spherical braid group 
$SB_{2n}$ of $2n$ strings. 
Given a braid $b \in SB_{2n}$, 
we stuck $b$ on ${\bf A} = A_1 \cup \cdots \cup A_{n}$, 
and concatenate the bottom endpoints of $b$ with the endpoints of ${\bf A}$, see Figure~\ref{fig_action}(2). 
Then 
we obtain $n$ disjoint smooth arcs  $^{b}{\bf A}$ 
properly embedded in $D^3$. 
We may suppose that the arcs $^{b}{\bf A}$ have the same endpoints as ${\bf A}$. 
The {\it (spherical) wicket group} $SW_{2n}$ is the subgroup of $SB_{2n}$ 
generated by braids $b$'s such that 
$^{b}{\bf A}$ is isotopic to ${\bf A}$ relative to $\partial {\bf A}$. 
For example, the following braids are elements of $SW_{2n}$. 
\begin{eqnarray*}
r_i &=& \sigma_{2i} \sigma_{2i+1}\sigma_{2i-1}^{-1} \sigma_{2i}^{-1}\ \hspace{3mm}  (i \in \{ 1, \cdots, n-1\}), 
\\
s_i&=& \sigma_{2i}^{-1} \sigma_{2i+1}^{-1} \sigma_{2i-1}^{-1}  \sigma_{2i}^{-1}\  \hspace{2mm}(i \in \{ 1, \cdots, n-1\}), 
\\
t_j &=& \sigma_{2j-1}^{-1} \ \hspace{23mm}(j \in  \{1, \cdots, n\}), 
\end{eqnarray*}
see Figure~\ref{fig_generator_w}(1)(2)(3). 
Now we recall the homomorphism  $\Gamma: SB_{2n} \rightarrow \mathrm{Mod}(\varSigma_{0,2n})$. 
We claim that $\Gamma(r_i)$, $\Gamma(s_i)$ and $\Gamma(t_j)$ are elements of $SH_{2n}$. 
Indeed,  $\Gamma(r_i)$ (resp. $\Gamma(s_i)$) 
 interchanges the $i$th and $(i+1)$st wickets $A_i$, $A_{i+1}$ 
by passing  $A_i$ through (resp. around) $A_{i+1}$. 
$\Gamma(t_j)$ rotates the $j$th wicket $A_j$
$180$ degrees around its vertical axis of the symmetry, 
see Figure~\ref{fig_generator_w}(4)(5)(6).

\begin{thm}
\label{thm_HiWi}
The Hilden group $SH_{2n}$ is the image of the homomorphism 
$\Gamma|_{SW_{2n}}: SW_{2n} \rightarrow \mathrm{Mod}(\varSigma_{0,2n})$ 
whose kernel is equal to $\langle \Delta^2 \rangle$. 
In particular, 
$$SW_{2n}/\langle \Delta^2 \rangle  \simeq SH_{2n}.$$
\end{thm}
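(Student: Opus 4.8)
The plan is to prove the two assertions---that $\mathrm{im}(\Gamma|_{SW_{2n}}) = SH_{2n}$ and that $\ker(\Gamma|_{SW_{2n}}) = \langle \Delta^2 \rangle$---separately; the displayed isomorphism $SW_{2n}/\langle\Delta^2\rangle \cong SH_{2n}$ is then immediate from the first isomorphism theorem. Throughout I write $\rho: \mathrm{Mod}(D^3, {\bf A}) \rightarrow \mathrm{Mod}(\varSigma_{0,2n})$ for the injective homomorphism $[\Psi] \mapsto [\Psi|_{\partial D^3}]$ of Proposition~\ref{prop:uniqueness}, whose image is by definition $SH_{2n}$. The geometric bridge between the two sides is the \emph{dragging} construction defining $SW_{2n}$: a braid $b \in SB_{2n}$, viewed as a motion of the $2n$ points $\partial {\bf A}$ on $S^2 = \partial D^3$, drags the wicket system ${\bf A}$ to $^{b}{\bf A}$, and membership $b \in SW_{2n}$ records that $^{b}{\bf A}$ returns to ${\bf A}$ up to isotopy rel $\partial {\bf A}$.

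For the inclusion $\Gamma(SW_{2n}) \subseteq SH_{2n}$, I would take $b \in SW_{2n}$ and realize the dragging by an ambient isotopy. By the isotopy extension theorem there is an isotopy $\{G_t\}$ of $D^3$ with $G_0 = \mathrm{id}$ whose restriction to $\partial D^3$ pushes $\partial {\bf A}$ along $b$ and with $G_1({\bf A}) = {}^{b}{\bf A}$. Since $b \in SW_{2n}$, there is an isotopy rel $\partial {\bf A}$ carrying $^{b}{\bf A}$ back to ${\bf A}$; I would extend it to an ambient isotopy $\{H_s\}$ chosen to be the identity on a collar of $\partial D^3$, so that $\Psi_b := H_1 \circ G_1$ lies in $\mathrm{Homeo}_+(D^3, {\bf A})$ while $\Psi_b|_{\partial D^3} = G_1|_{\partial D^3}$ represents $\Gamma(b)$. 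Hence $\Gamma(b) = \rho([\Psi_b]) \in SH_{2n}$.

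For the reverse inclusion I would run the construction backwards. Given $\Psi \in \mathrm{Homeo}_+(D^3, {\bf A})$, the path-connectivity of $\mathrm{Homeo}_+(D^3)$ yields an isotopy $\{\Psi_t\}$ from $\mathrm{id}$ to $\Psi$. The points $\Psi_t(\partial {\bf A})$ trace a loop in the configuration space of $2n$ points on $S^2$, i.e.\ a braid $b \in SB_{2n}$ with $\Gamma(b) = [\Psi|_{\partial D^3}]$, and the arcs $\Psi_t({\bf A})$ realize the dragging of ${\bf A}$ along $b$, ending at $\Psi({\bf A}) = {\bf A}$. Because the dragged system $^{b}{\bf A}$ is well defined up to isotopy rel $\partial {\bf A}$ by the braid class of $b$, this shows $^{b}{\bf A} \simeq {\bf A}$ rel $\partial {\bf A}$, i.e.\ $b \in SW_{2n}$; thus $[\Psi|_{\partial D^3}] \in \Gamma(SW_{2n})$ and $\mathrm{im}(\Gamma|_{SW_{2n}}) = SH_{2n}$.

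Finally, for the kernel I note $\ker(\Gamma|_{SW_{2n}}) = SW_{2n} \cap \ker\Gamma = SW_{2n} \cap \langle \Delta^2 \rangle$. As $\langle \Delta^2 \rangle \cong {\Bbb Z}/2{\Bbb Z}$, it suffices to check $\Delta^2 \in SW_{2n}$, and I would exhibit $\Delta^2$ as the braid traced by the loop of rigid rotations $\{R_\theta\}_{\theta \in [0,2\pi]}$ of $D^3$ about the symmetry axis of the standard wickets: these preserve ${\bf A}$ throughout and restrict on $\partial D^3$ to the full-twist motion of the $2n$ endpoints, so $^{\Delta^2}{\bf A} = {\bf A}$. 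The subtlest point, and the one I would treat most carefully, is exactly the well-definedness of the dragged wicket up to rel-boundary isotopy from the braid alone; relatedly, $\pi_1(\mathrm{Homeo}_+(D^3)) \cong {\Bbb Z}/2{\Bbb Z}$ means the braid $b$ produced in the reverse inclusion depends on the isotopy $\{\Psi_t\}$ only up to multiplication by $\Delta^2$, which is precisely the ambiguity killed in passing to $SW_{2n}/\langle\Delta^2\rangle$---a coincidence that both certifies $\Delta^2 \in SW_{2n}$ and pins down the kernel.
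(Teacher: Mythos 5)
Your proposal is correct in substance, but it takes a genuinely different route from the paper. The paper's proof is algebraic: after identifying $SW_{2n}$ with $\pi_1(\mathcal{SA}_{n})$ (Proposition~\ref{prop:SWandPi1}), it invokes Hilden's theorem that $SH_{2n}$ is generated by $\Gamma(\sigma_{2i-1})$, $\Gamma(\eta_i)$, $\Gamma(\rho_{ij})$, $\Gamma(\omega_{ij})$, invokes Brendle--Hatcher's theorem that $SW_{2n}$ is generated by $r_i$, $s_i$, $t_j$, and then writes each Hilden generator explicitly as a word in $r_i, s_i, t_j$ (e.g.\ $\eta_i = s_i t_i t_{i+1}$); this yields $\Gamma(SW_{2n}) = SH_{2n}$, and the kernel is settled by the explicit word $\Delta^2 = (s_{n-1}\cdots s_2 s_1 t_1^2)^{n} \in SW_{2n}$. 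You instead argue geometrically in both directions: the isotopy extension theorem realizes a wicket braid by an element of $\mathrm{Homeo}_+(D^3,{\bf A})$, and conversely path-connectivity of $\mathrm{Homeo}_+(D^3)$ lets you read a braid off an isotopy from the identity to a given element of $\mathrm{Homeo}_+(D^3,{\bf A})$. What your route buys is independence from Hilden's generation theorem, which is the main external input of the paper's proof (and which the paper reuses anyway for the presentation in Theorem~\ref{thm:presentaion-HHg}). The cost is that everything rests on the lemma you flag but do not prove: that the rel-$\partial{\bf A}$ isotopy class of the dragged system ${}^{b}{\bf A}$ depends only on the class of $b$ in $SB_{2n}$, and coincides with $G_1({\bf A})$ for any ambient isotopy $\{G_t\}$ realizing the strand motion. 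This is true and standard (parametrized isotopy extension, or homotopy lifting for the evaluation fibration from $\mathrm{Homeo}_+(S^2)$ to the configuration space of $2n$ points), but it is the real technical content of your argument and must be carried out, since both your reverse inclusion and your kernel computation quote it.

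Two repairs are needed. First, your justification that $\Delta^2 \in SW_{2n}$ contains a false assertion: for $n \ge 2$ no one-parameter family of rigid rotations of $D^3$ can preserve ${\bf A}$ setwise for all $\theta$ (a set invariant under all $R_\theta$ is a union of circular orbits and points of the axis, which a disjoint union of $n$ arcs with $2n$ endpoints on $\partial D^3$ cannot be). What is true, and suffices, is weaker: the loop $\{R_{2\pi t}\}_{t\in[0,1]}$ is an ambient isotopy starting and ending at the identity, so it carries ${\bf A}$ exactly back to ${\bf A}$ while its endpoints trace the braid $\Delta^2$; your well-definedness lemma then gives ${}^{\Delta^2}{\bf A} \simeq {\bf A}$ rel $\partial {\bf A}$, hence $\Delta^2 \in SW_{2n}$. (The paper sidesteps this entirely with the explicit word above.) Second, $SW_{2n}$ is defined as the subgroup \emph{generated by} braids $b$ with ${}^{b}{\bf A} \simeq {\bf A}$ rel $\partial{\bf A}$, whereas your first inclusion assumes every element of $SW_{2n}$ has this property; either verify that the set of such braids is closed under products and inverses, or run that step on generators only and use that $SH_{2n}$ is a group.
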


\noindent
We shall prove Theorem~\ref{thm_HiWi} in Appendix~\ref{section_appendix} 
by using a finite generating set of $SW_{2n}$ (resp. $SH_{2n}$) given by 
Brendle-Hatcher~\cite{Brendle-Hatcher} (resp. Hilden~\cite{Hilden}).  
We note that 
the definition of the spherical wicket groups in \cite{Brendle-Hatcher} 
is different from the one in this paper. 
We shall claim in Appendix~\ref{section_appendix} that these two definitions give rise to the same group, 
see Proposition~\ref{prop:SWandPi1}. 

The wicket groups are closely related to the {\it loop braid groups} 
which arise naturally in the different fields of mathematics. 
For more details of loop braid groups, see Damiani~\cite{Damiani}. 

For a finite presentation of the {\it Hilden group} on a {\it plane}, 
see Tawn~\cite{Tawn}.

\begin{center}
\begin{figure}
\includegraphics[width=5.2in]{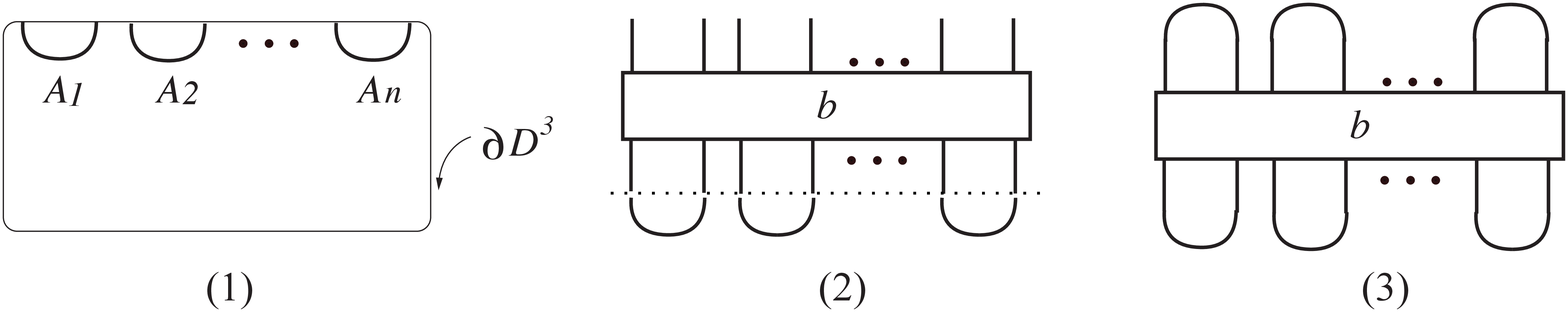}
\caption{(1)  ${\bf A} = A_1 \cup \cdots \cup A_n$. (2) $^{b}{\bf A}$. 
(3) $\mathrm{pl}(b)$.}
\label{fig_action}
\end{figure}
\end{center}

\begin{center}
\begin{figure}
\includegraphics[width=4.2in]{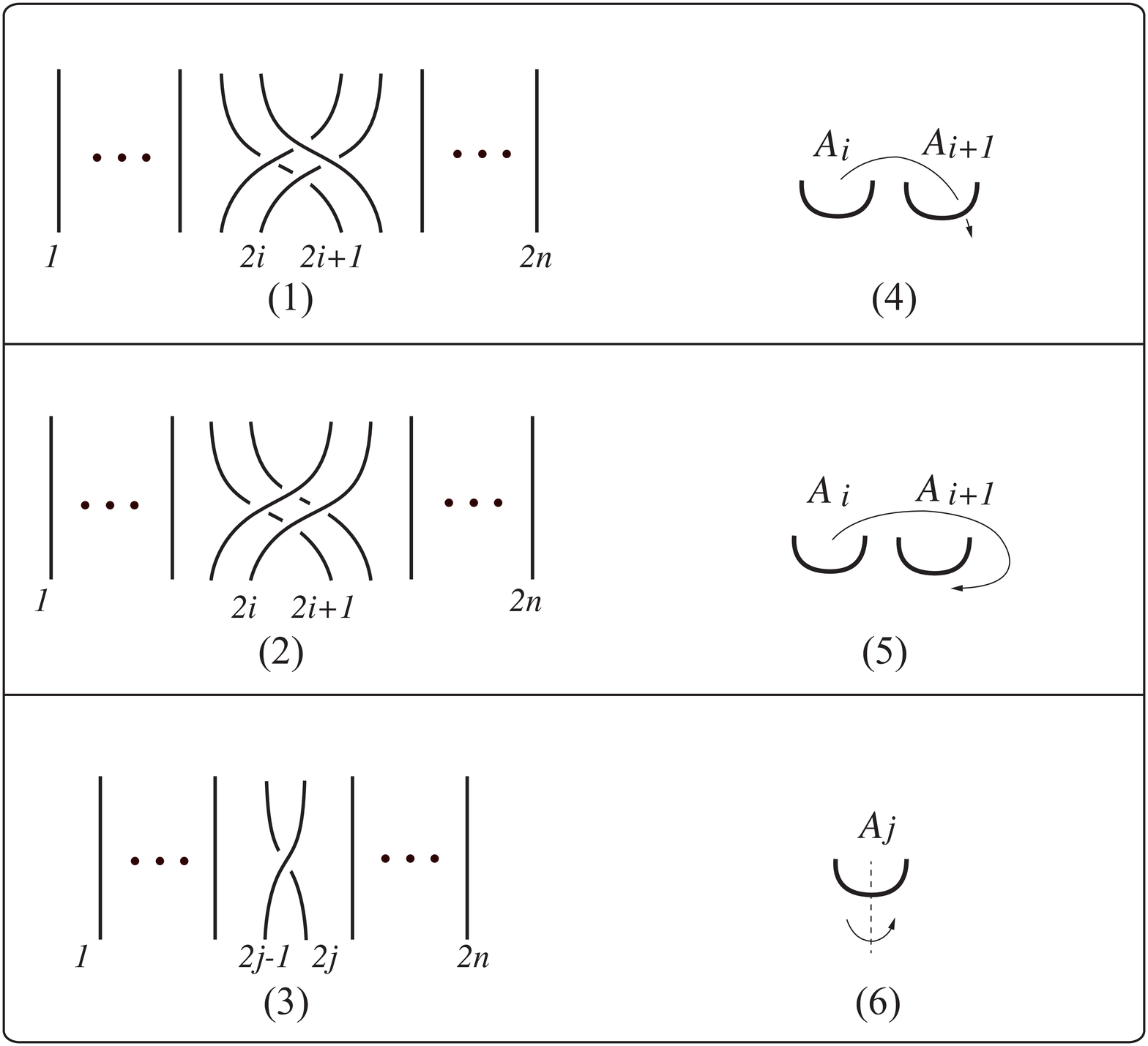}
\caption{(1) $r_i$,  (2) $s_i $ and (3) $t_j\in SW_{2n}$. 
(4) $\Gamma(r_i)$,  (5) $\Gamma(s_i)$ and 
(6) $\Gamma(t_j) \in SH_{2n}$. 
See also Remark~\ref{rem_braid_convention}. 
(cf. \cite[Figure~2]{Brendle-Hatcher}.)} 
\label{fig_generator_w}
\end{figure}
\end{center}

\subsubsection{Plat closures of braids} 
\label{subsubsection_Plat}

In this section, we prove that 
$SW_{2n}$ is of infinite index in $SB_{2n}$ for $n \ge 2$.  
(We do not use this claim in the rest of the paper.) 
To do this, we turn to the plat closures of braids which were introduced by Birman. 
Given $b \in SB_{2n}$, 
the {\it plat closure} of $b$, denoted by $\mathrm{pl}(b)$, is a link in $S^3$ obtained from $b$ 
putting trivial $n$ arcs on $n$ pairs of consecutive, bottom (resp. top) $2n$ endpoints of $b$, 
see Figure~\ref{fig_action}(3). 
Observe that given two braids $w,w' \in SW_{2n}$, 
the plat closures $\mathrm{pl}(b)$ and $\mathrm{pl}(wbw')$ represent the same link. 
Moreover the plat closure of any element $w \in SW_{2n}$, $\mathrm{pl}(w)$,  is a disjoint union of $n$ unknots. 
Every link in $S^3$ can be represented by the plat closure of some braid with even strings 
\cite[Theorem~5.1]{Birman}. 
Birman characterizes two braids with the same strings  
whose plat closures yield the same link 
\cite[Theorem~5.3]{Birman}. 
Fore more discussion on plat closures of braids, see \cite[Chapter~5]{Birman}.

\begin{lem}
\label{lem_SWindex}
$SW_{2n}$ is of infinite index in $SB_{2n}$ for $n \ge 2$.  
\end{lem}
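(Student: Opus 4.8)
The plan is to argue by contradiction, using the plat closure as an invariant of double cosets. Suppose $SW_{2n}$ has finite index $N := [SB_{2n} : SW_{2n}] < \infty$. The key structural input is the observation recorded just above the statement: for any $b \in SB_{2n}$ and any $w, w' \in SW_{2n}$, the links $\mathrm{pl}(b)$ and $\mathrm{pl}(wbw')$ coincide. Consequently the link type $\mathrm{pl}(b)$ depends only on the double coset $SW_{2n}\, b\, SW_{2n}$, so $\mathrm{pl}$ descends to a well-defined map from the double-coset set $SW_{2n}\backslash SB_{2n}/SW_{2n}$ to the set of link types in $S^3$.

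Next I would count double cosets. Since each double coset $SW_{2n}\, b\, SW_{2n}$ is a union of left cosets of the form $(wb)\, SW_{2n}$, the number of double cosets is at most the number of left cosets, namely $N$. Hence, under the assumption of finite index, the image of $\mathrm{pl}$ is finite: only finitely many link types arise as plat closures of braids in $SB_{2n}$.

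The heart of the argument, and the step I expect to be the main obstacle, is to contradict this by exhibiting infinitely many pairwise distinct links among $\{\mathrm{pl}(b) : b \in SB_{2n}\}$ for every fixed $n \ge 2$. Here I would fix an infinite family of distinct $2$-bridge knots, for instance the $(2,2k+1)$-torus knots $K_k$ for $k \ge 1$. Each $K_k$, having bridge number $2$, admits a presentation as the plat closure of a suitable $4$-braid $b_k \in SB_4$; regarding $SB_4$ as supported on the first four strings and capping the remaining $2n-4$ strings trivially, $b_k$ becomes a $2n$-braid whose plat closure is the split link consisting of $K_k$ together with $n-2$ unknotted, unlinked circles. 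These split links are pairwise distinct, since the knotted component $K_k$ is a well-defined invariant of the link (by uniqueness of the split decomposition, or simply because $K_k$ is recovered as its unique nontrivial component), and the $K_k$ themselves are pairwise non-isotopic, being separated by a classical invariant such as their determinant $2k+1$ or their Seifert genus $k$. This produces infinitely many distinct plat closures, contradicting the finiteness obtained above.

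The genuinely delicate points to pin down are that these infinitely many knots really occur as plats of braids of the \emph{fixed} width $2n$ (Birman's realization theorem \cite{Birman} only guarantees realizability by plats of \emph{some} even width, so one must invoke the bridge-number presentation to control the width), together with the routine bookkeeping needed to certify that the resulting split links are mutually non-homeomorphic. Once these are settled, the contradiction is immediate and the lemma follows.
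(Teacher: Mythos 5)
Your proof is correct, and it shares with the paper the same key invariant --- the plat closure, which by the observation recorded just before the lemma depends only on the double coset $SW_{2n}\, b\, SW_{2n}$ --- but the logical route is genuinely different. The paper argues directly rather than by contradiction: it takes the single braid $b=\sigma_2^2$, notes that $\mathrm{pl}(b^k)$ contains the $(2,2k)$-torus link and so is not a disjoint union of unknots for $k\neq 0$, and combines this with the second observation stated before the lemma (the plat closure of \emph{every} element of $SW_{2n}$ is a disjoint union of $n$ unknots) to conclude that no nonzero power of $b$ lies in $SW_{2n}$; hence the left cosets $b^k SW_{2n}$, $k\in\mathbb{Z}$, are pairwise distinct and the index is infinite, with explicit coset representatives. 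You instead assume finite index, bound the number of double cosets by the number of left cosets, conclude that only finitely many link types can occur as plat closures, and contradict this with the family of $(2,2k+1)$-torus knots realized as $4$-plats and padded by trivial arcs. What your route buys: you never need the fact that plats of wicket elements are unlinks, only the double-coset invariance, so the argument works verbatim for any subgroup on which some link-valued invariant is two-sided invariant and takes infinitely many values. What it costs: you must import the bridge-number realization of $2$-bridge knots as $4$-plats (though for your family this is explicit --- $K_k=\mathrm{pl}(\sigma_2^{2k+1})$ on four strands, essentially the same braids the paper twists --- so the ``main obstacle'' you flag is easily discharged), together with the bookkeeping that the resulting split links are pairwise distinct. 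Both proofs are sound; the paper's is shorter and self-contained given the two observations already in place, while yours isolates cleanly the pigeonhole principle underlying the statement.
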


\begin{proof}
We take a braid $b= \sigma_2 \sigma_2 \not\in SW_{2n}$. 
Given $w, w' \in SW_{2n}$, we have 
$\mathrm{pl}(b^k) = \mathrm{pl}(b^k w')  = \mathrm{pl}(w b^k) $ for each integer $k$, and 
the link $\mathrm{pl}(b^k)$ contains the $(2, 2k)$ torus link (as components) 
which is not a disjoint union of  unknots 
for each $k \ne 0$. 
In particular both  $b^k w', w b^k \notin SW_{2n}$. 
This implies that $SW_{2n}$ is of infinite index in $SB_{2n}$ for $n \ge 2$.  
\end{proof}

\noindent
By Lemma~\ref{lem_SWindex}, 
the Hilden group $SH_{2n}$ is of infinite index in $\mathrm{Mod}(\varSigma_{0,2n})$ for $n \ge 2$, 
since $SW_{2n}/{\langle \Delta^2 \rangle} \simeq SH_{2n}$ and 
$SB_{2n}/ {\langle \Delta^2 \rangle} \simeq \mathrm{Mod}(\varSigma_{0,2n})$.

\subsection{Hyperelliptic  handlebody groups} 
\label{subsection_HypHand}

Let ${\Bbb H}_g$ be a handlebody of genus $g$, 
i.e, ${\Bbb H}_g$ is an oriented $3$-manifold 
obtained from a 3-ball attaching $g$ copies of a $1$-handle. 
We take an involution 
$\mathcal{S}: {\Bbb H}_g \rightarrow {\Bbb H}_g$ 
whose quotient space 
 ${\Bbb H}_g/\mathcal{S}$ is a $3$-ball $D^3$ 
 with a union of wickets ${\bf A}= A_1 \cup \cdots  \cup A_{g+1}$ 
 as the image of the fixed point sets of $\mathcal{S}$ under the quotient,  
see Figure~\ref{fig_handle}. 
We call $\mathcal{S}$ the {\it hyperelliptic involution} on ${\Bbb H}_g$. 
The restriction $\mathcal{S}|_{\partial {\Bbb H}_g}: \partial {\Bbb H}_g \rightarrow \partial {\Bbb H}_g$  
defines an involution on $\partial {\Bbb H}_g \simeq \varSigma_g$. 
For simplicity, we denote such an involution $\mathcal{S}|_{\partial {\Bbb H}_g}$ 
by the same notation $\mathcal{S}$, and 
also call it the hyperelliptic involution on $\partial {\Bbb H}_g$. 
The quotient space $\partial {\Bbb H}_g/\mathcal{S}$ is a $2$-sphere 
with $2g+2$ marked points 
that are the image of the fixed points set of 
$\mathcal{S}: \partial {\Bbb H}_g \rightarrow \partial {\Bbb H}_g$ under the quotient.

Let $\mathcal{H}(\varSigma_g)$ be the subgroup of $\mathrm{Mod}(\varSigma_g)$ 
consisting of isotopy classes of orientation preserving homeomorphisms 
on $\varSigma_g$ that commute with $\mathcal{S}:  \partial {\Bbb H}_g \rightarrow \partial {\Bbb H}_g$. 
Such a group $\mathcal{H}(\varSigma_g)$ is called 
the {\it hyperelliptic mapping class group} or {\it symmetric mapping class group}. 
Note that $\mathrm{Mod}(\varSigma_2) = \mathcal{H}(\varSigma_2)$. 
If $g \ge 3$, then 
$\mathcal{H}(\varSigma_g)$ is of infinite index in $\mathrm{Mod}(\varSigma_g)$. 
By the fundamental result by Birman-Hilden~\cite{Birman-Hilden}, 
one has a handy description of $\mathcal{H}(\varSigma_g)$ 
via braids, as we explain now. 
Note that any homeomorphism on $\partial {\Bbb H}_g$ 
that commute with $\mathcal{S}$ fixes the fixed points set of 
$\mathcal{S}:  \partial {\Bbb H}_g \rightarrow \partial {\Bbb H}_g$ 
as a set. 
Hence via the quotient of $\partial {\Bbb H}_g$ by $\mathcal{S}$, 
such a  homeomoprhism on $\partial {\Bbb H}_g$ descends to a homeomorphism 
on a sphere $\partial {\Bbb H}_g/\mathcal{S}$ 
which preserves the $2g+2$ marked points of $\partial {\Bbb H}_g/\mathcal{S}$. 
Thus we have a map 
$$q: \mathcal{H}(\varSigma_g) \rightarrow \mathrm{Mod}(\varSigma_{0,2g+2})$$
by using a representative 
of each mapping class of $\mathcal{H}(\varSigma_g)$ 
which commutes with $\mathcal{S}$. 
Let $\iota \in \mathcal{H}(\varSigma_g)$ 
denote a mapping class of $\mathcal{S}:  \partial {\Bbb H}_g \rightarrow \partial {\Bbb H}_g$ 
which is of order $2$.

\begin{thm}[Birman-Hilden] 
\label{thm_BH}
For $g \ge 2$, 
the map $q: \mathcal{H}(\varSigma_g) \rightarrow \mathrm{Mod}(\varSigma_{0,2g+2})$ 
is well-defined, and it is a surjective homomorphism with the kernel $\langle \iota \rangle$. 
In particular, 
$$\mathcal{H}(\varSigma_g) / \langle \iota \rangle \simeq \mathrm{Mod}(\varSigma_{0,2g+2}) 
\simeq SB_{2g+2}/ \langle \Delta^2 \rangle.$$ 
\end{thm}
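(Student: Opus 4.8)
The plan is to factor $q$ through the \emph{symmetric mapping class group} $\mathrm{SMod}(\varSigma_g)$, consisting of isotopy classes of $\mathcal{S}$-equivariant homeomorphisms taken up to $\mathcal{S}$-equivariant isotopy, and to study the two natural maps out of it: the forgetful map $\mathrm{SMod}(\varSigma_g) \to \mathrm{Mod}(\varSigma_g)$ and the descent map $\mathrm{SMod}(\varSigma_g) \to \mathrm{Mod}(\varSigma_{0,2g+2})$ induced by the branched quotient $\varSigma_g \to \varSigma_g/\mathcal{S} \simeq \varSigma_{0,2g+2}$. The descent map is manifestly well defined and a homomorphism, since an $\mathcal{S}$-equivariant isotopy projects to an isotopy of the quotient sphere. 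Its kernel is exactly the deck group: if an equivariant $f$ projects to $\bar f \simeq \mathrm{id}$ via an isotopy $H_t$ downstairs, then $H_t$ lifts to an isotopy $\widetilde{H}_t$ with $\widetilde{H}_0 = f$, and this lift is automatically equivariant because $\widetilde{H}_t \circ \mathcal{S}$ and $\mathcal{S}\circ \widetilde{H}_t$ both cover $H_t$ and agree at $t=0$; hence $\widetilde{H}_1$ is a lift of $\mathrm{id}$, i.e. one of the two deck transformations $\mathrm{id}$ or $\mathcal{S}$. Surjectivity follows by lifting a generating set: $\mathrm{Mod}(\varSigma_{0,2g+2})$ is generated by the half-twists $h_i$, and each $h_i$ lifts through the hyperelliptic cover to a Dehn twist about the preimage of its defining arc, so every class on the sphere admits an equivariant lift.

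Next I would identify the image of the forgetful map with $\mathcal{H}(\varSigma_g)$ and establish its injectivity. The image equals $\mathcal{H}(\varSigma_g)$ essentially by definition, since elements of $\mathcal{H}(\varSigma_g)$ are precisely classes possessing a representative commuting with $\mathcal{S}$ on the nose. Injectivity is the assertion that two $\mathcal{S}$-equivariant homeomorphisms which are isotopic in $\mathrm{Mod}(\varSigma_g)$ are in fact $\mathcal{S}$-equivariantly isotopic — the Birman–Hilden property for the hyperelliptic cover, valid for $g \ge 2$. Granting this, the forgetful map is an isomorphism onto $\mathcal{H}(\varSigma_g)$, and transporting the descent map across this isomorphism yields exactly $q$: it is well defined (independence of the chosen equivariant representative is precisely equivariant-isotopy invariance), a homomorphism, surjective, with kernel $\langle \iota \rangle$. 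Dividing out gives $\mathcal{H}(\varSigma_g)/\langle \iota \rangle \simeq \mathrm{Mod}(\varSigma_{0,2g+2})$, and the last isomorphism with $SB_{2g+2}/\langle \Delta^2 \rangle$ is the presentation already recorded in Section~\ref{subsection_Sbraid}.

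The hard part is the Birman–Hilden property itself: upgrading ordinary isotopy of equivariant maps to equivariant isotopy. The standard route is to descend to the unbranched double cover $\varSigma_g \setminus \mathrm{Fix}(\mathcal{S}) \to \varSigma_{0,2g+2}$, fix a hyperbolic metric on the base (available once the punctured sphere is hyperbolic, which forces the condition $g \ge 2$), lift it to an $\mathcal{S}$-invariant metric upstairs, and use uniqueness of geodesic representatives together with the contractibility of the relevant space of maps to straighten an arbitrary isotopy; the finite order of $\mathcal{S}$ then permits symmetrizing it to an equivariant one. I expect the genuine obstruction to be this equivariant isotopy lemma — for $g \ge 2$ exactly the theorem of Birman–Hilden \cite{Birman-Hilden}, which we may simply invoke — while the most delicate routine bookkeeping will be the branch-point accounting, in particular the arcs-versus-curves correspondence and the fact that half-twists downstairs lift to Dehn twists upstairs.
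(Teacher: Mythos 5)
Your proposal is correct and follows essentially the same route as the paper: the paper treats Theorem~\ref{thm_BH} as Birman--Hilden's cited result, and its appendix isolates exactly the input you identify as the hard part --- Proposition~\ref{prop:sym-isotopy-BH} (Theorem~7 of \cite{Birman-Hilden}), the equivariant-isotopy property --- which identifies $\pi_0(\mathrm{SHomeo}_+(\varSigma_g))$ with $\mathcal{H}(\varSigma_g)$ and makes $q$ well defined. The remaining covering-space bookkeeping in your write-up (the descent map, kernel equal to the deck group $\langle \iota \rangle$, surjectivity via lifting half-twists to Dehn twists) is the same standard argument implicit in the paper's treatment.
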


Thurston's classification theorem of surface homeomorphisms  states that 
every mapping class $\phi \in \mathrm{Mod}(\varSigma)$ is  one of the  three types: 
periodic, reducible, pseudo-Anosov (\cite{Thurston2}). 
The following well-known lemma says that 
$q$ preserves these types.

\begin{lem}
\label{lem_NTtype}
If $ \phi \in \mathcal{H}(\varSigma_g)$ is pseudo-Anosov 
(resp. periodic, reducible), then 
so is $q(\phi) \in \mathrm{Mod}(\varSigma_{0,2g+2})$, 
i.e, $q(\phi)$ is pseudo-Anosov 
(resp. periodic, reducible). 
When $\phi  \in \mathcal{H}(\varSigma_g)$ is pseudo-Anosov, 
the equality 
$\lambda(\phi ) = \lambda(q(\phi))$ holds. 
\end{lem}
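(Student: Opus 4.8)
The plan is to exploit that $q$ is the quotient map of the double branched cover $p: \varSigma_g \to \varSigma_{0,2g+2}$ whose deck transformation is the hyperelliptic involution $\mathcal{S}$, together with the canonicity (uniqueness) of the Nielsen--Thurston data. Throughout I use Theorem~\ref{thm_BH}, which gives that $q$ is a surjective homomorphism with $\ker q = \langle \iota \rangle$, a group of order two. I would in fact prove that each of the three types is preserved in \emph{both} directions, and then read off the reducible case by elimination from the trichotomy that every mapping class is exactly one of: pseudo-Anosov, finite order, or infinite-order reducible.

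The periodic case is purely algebraic. If $\phi$ is periodic then $\phi^k = 1$ for some $k$, so $q(\phi)^k = q(\phi^k) = 1$ and $q(\phi)$ is periodic. Conversely, if $q(\phi)^k = 1$ then $\phi^k \in \ker q = \langle \iota \rangle$, whence $\phi^{2k} = 1$ because $\iota^2 = 1$; thus $\phi$ is periodic.

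The heart is the pseudo-Anosov case, and it simultaneously yields the dilatation equality. Suppose $\phi$ is pseudo-Anosov, with pseudo-Anosov representative $\Phi$, invariant measured foliations $(\mathcal{F}^u, \mu^u)$, $(\mathcal{F}^s, \mu^s)$, and dilatation $\lambda = \lambda(\phi)$. Since $\phi$ commutes with $\iota = [\mathcal{S}]$, the involution $\mathcal{S}$ carries the canonical foliations of $\Phi$ to those of $\mathcal{S}\Phi\mathcal{S}^{-1} \simeq \Phi$; by uniqueness of the pseudo-Anosov foliations (Thurston, \cite{Thurston2}) these are the same pair, and $\mathcal{S}$ preserves each of $\mathcal{F}^u$ and $\mathcal{F}^s$ setwise (it cannot interchange them, as that would conjugate $\Phi$ to $\Phi^{-1}$). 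Using the singular flat structure determined by $(\mathcal{F}^u, \mathcal{F}^s)$ one realizes $\mathcal{S}$ as a genuine involution commuting with $\Phi$. Then $\mathcal{F}^u$ and $\mathcal{F}^s$ descend under $p$ to transverse measured foliations on $\varSigma_{0,2g+2}$, with one-pronged singularities precisely at the $2g+2$ marked points, and the induced homeomorphism $\bar\Phi$ representing $q(\phi)$ preserves them with the same stretch factor, so $q(\phi)$ is pseudo-Anosov. Because $p$ is a local homeomorphism away from the branch points, the local expansion of the transverse measure --- equivalently the topological entropy, which is invariant under the finite-to-one semiconjugacy $p \circ \Phi = \bar\Phi \circ p$ --- agrees upstairs and downstairs, giving $\lambda(\phi) = \lambda(q(\phi))$. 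For the converse I lift: given a pseudo-Anosov representative $\bar\Phi$ of $q(\phi)$, pull its foliations back along $p$; both lifts $\tilde\Phi$ and $\mathcal{S}\tilde\Phi$ of $\bar\Phi$ preserve the pulled-back foliations with dilatation $\lambda(\bar\Phi)$, so both are pseudo-Anosov, and one of them represents $\phi$.

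Finally, the reducible case follows by elimination: a mapping class is reducible of infinite order exactly when it is neither finite order nor pseudo-Anosov, so the two equivalences already established transport this property across $q$ in both directions. \textbf{Main obstacle.} The delicate point is the pseudo-Anosov step: one must promote the commuting relation $[\mathcal{S}][\Phi] = [\Phi][\mathcal{S}]$ in the mapping class group to an honest pair of commuting homeomorphisms, and then verify that the invariant foliations descend to \emph{bona fide} singular measured foliations on the punctured sphere with the correct prong data at the branch points, so that the dilatation is genuinely preserved. Uniqueness of the pseudo-Anosov representative is exactly what forces the finite symmetry $\mathcal{S}$ to respect the foliations and makes this descent possible.
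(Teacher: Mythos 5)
Your treatment of the pseudo-Anosov case and of the dilatation equality is correct, but it runs in the opposite direction from the paper's argument. The paper never descends pseudo-Anosov data along the branched cover at all: it proves that $\phi$ pseudo-Anosov implies $q(\phi)$ pseudo-Anosov \emph{by contradiction}, using the Nielsen--Thurston trichotomy downstairs --- if $q(\phi)$ were periodic (resp.\ reducible), a periodic (resp.\ reducible) representative of $q(\phi)$ would lift through the double branched cover to a periodic (resp.\ reducible) representative of $\phi$ --- and then obtains $\lambda(\phi)=\lambda(q(\phi))$ exactly as in your final step, by lifting a pseudo-Anosov representative of $q(\phi)$ and pulling back its foliations. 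All the work is thus placed in the easy lifting direction, where equivariance is automatic. You instead prove the downward implication directly, by symmetrizing: uniqueness of the invariant foliations forces $\iota$ to fix them, the flat structure lets you realize $\mathcal{S}$ as an honest involution commuting with $\Phi$, and the pseudo-Anosov data then descends. This is a standard and valid route, but it is the expensive one: it needs the realization of the centralizer of a pseudo-Anosov class in the affine group of its flat surface, injectivity of that affine group in the mapping class group (to get an actual involution), and the fact that the realized involution, being isotopic to $\mathcal{S}$, produces the class $q(\phi)$ on the standard quotient --- none of which the contradiction-plus-lifting argument requires. What your route buys is an explicit two-way equivalence for each type. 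One detail is inaccurate but harmless: the descended foliations need not be one-pronged at all $2g+2$ marked points (a $2k$-pronged singularity of $\mathcal{F}^u$ at a Weierstrass point descends to a $k$-pronged point); your argument only needs admissibility of the descended data, not this prong count.

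There is, however, a genuine gap in your reducible case. ``Periodic'' and ``reducible'' are not mutually exclusive: $\iota$ itself, and more generally any finite-order class with an invariant essential multicurve (for instance the order-two class on $\varSigma_2$ whose quotient is a torus with two cone points), is both. Your elimination via the strict trichotomy (pseudo-Anosov / finite order / infinite-order reducible) therefore proves only that $\phi$ reducible of \emph{infinite order} has $q(\phi)$ reducible. When $\phi$ is reducible of finite order, you conclude only that $q(\phi)$ is periodic --- and irreducible periodic classes of $\mathrm{Mod}(\varSigma_{0,2g+2})$ do exist (e.g.\ a rotation cyclically permuting all $2g+2$ punctures), so reducibility of $q(\phi)$ is not formal and needs its own argument, say by descending a $\langle\phi,\iota\rangle$-invariant multicurve after Nielsen realization, or by checking that every irreducible periodic class downstairs lifts only to irreducible classes. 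This corner is exactly the part of Lemma~\ref{lem_NTtype} that the paper dismisses as ``not hard to see'' and never uses later, but as written your proof does not establish the lemma's reducible assertion in full.
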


\begin{proof}
It is not hard to see that if $\phi$ is periodic (resp. reducible), 
then $q(\phi)$ is periodic (resp. reducible). 
Suppose that $\phi \in \mathcal{H}(\varSigma_g)$ is pseudo-Anosov. 
Then we see that $q(\phi)$ is pseudo-Anosov. 
If not, then it is periodic or reducible. 
Assume that $q(\phi)$ is periodic. 
(The proof in the reducible case is similar.) 
We take a periodic homeomorphism 
$f: \varSigma_{0, 2g+2} \rightarrow \varSigma_{0, 2g+2}$ 
which represents $q(\phi)$. 
Consider a lift $\tilde{f} : \varSigma_g \rightarrow \varSigma_g$ of $f$. 
Then $\tilde{f}$ is a periodic homeomorphism which represents $\phi$. 
Thus $\phi= [\tilde{f}]$ is a periodic mapping class, which contradicts the assumption that 
$\phi$ is  pseudo-Anosov.

We consider a pseudo-Anosov homeomorphism $\Phi: \varSigma_{0, 2g+2} \rightarrow \varSigma_{0, 2g+2}$  
which represents the pseudo-Anosov mapping class $q(\phi)$. 
 Take a lift $\tilde{\Phi}$ of $\Phi$ which represents $\phi \in \mathcal{H}(\varSigma_g)$. 
 Then $\tilde{\Phi}$ is a pseudo-Anosov homeomorphism whose stable/unstable foliations 
 are lifts of the stable/unstable foliations of $\Phi$. 
 In particular, we have $\lambda(\tilde{\Phi})= \lambda(\Phi)$, 
 since $\Phi$ and $\tilde{\Phi}$ have the same dynamics locally. 
\end{proof}

\noindent
By Theorem~\ref{thm_BH} and Lemma~\ref{lem_NTtype}, 
we  have the following. 

\begin{cor}
We have 
$\delta(\mathcal{H}(\varSigma_g)) = \delta_{0, 2g+2}$ for $g \ge 2$. 
\end{cor}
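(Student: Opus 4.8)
The plan is to establish the two inequalities $\delta(\mathcal{H}(\varSigma_g)) \ge \delta_{0,2g+2}$ and $\delta(\mathcal{H}(\varSigma_g)) \le \delta_{0,2g+2}$ separately, using the Birman--Hilden homomorphism $q$ of Theorem~\ref{thm_BH} together with Lemma~\ref{lem_NTtype}. Before doing so I would record that both minima are attained: the set $\mathrm{dil}(\varSigma_{0,2g+2})$ is closed and discrete, so $\delta_{0,2g+2}$ is realized by some pseudo-Anosov element of $\mathrm{Mod}(\varSigma_{0,2g+2})$, and the argument for the second inequality will in particular exhibit pseudo-Anosov elements of $\mathcal{H}(\varSigma_g)$, so that $\delta(\mathcal{H}(\varSigma_g))$ is itself well-defined.

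For the inequality $\delta(\mathcal{H}(\varSigma_g)) \ge \delta_{0,2g+2}$ I would take an arbitrary pseudo-Anosov element $\phi \in \mathcal{H}(\varSigma_g)$. By Lemma~\ref{lem_NTtype} its image $q(\phi) \in \mathrm{Mod}(\varSigma_{0,2g+2})$ is again pseudo-Anosov and satisfies $\lambda(\phi) = \lambda(q(\phi))$. Since $q(\phi)$ is a pseudo-Anosov element of $\mathrm{Mod}(\varSigma_{0,2g+2})$, we have $\lambda(q(\phi)) \ge \delta_{0,2g+2}$, hence $\lambda(\phi) \ge \delta_{0,2g+2}$. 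Taking the minimum over all pseudo-Anosov $\phi \in \mathcal{H}(\varSigma_g)$ yields the bound.

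For the reverse inequality $\delta(\mathcal{H}(\varSigma_g)) \le \delta_{0,2g+2}$ I would start from a pseudo-Anosov $\psi \in \mathrm{Mod}(\varSigma_{0,2g+2})$ with $\lambda(\psi) = \delta_{0,2g+2}$, represented by a pseudo-Anosov homeomorphism $\Phi: \varSigma_{0,2g+2} \rightarrow \varSigma_{0,2g+2}$. The key point is the lifting argument already used in the proof of Lemma~\ref{lem_NTtype}: through the branched double cover $\varSigma_g \rightarrow \varSigma_{0,2g+2}$ with deck transformation $\mathcal{S}$, the homeomorphism $\Phi$ lifts to a homeomorphism $\tilde{\Phi}: \varSigma_g \rightarrow \varSigma_g$ commuting with $\mathcal{S}$. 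This lift is pseudo-Anosov, with stable and unstable foliations the lifts of those of $\Phi$, and it has the same dilatation since its local dynamics agree with those of $\Phi$. Consequently $[\tilde{\Phi}]$ is a pseudo-Anosov element of $\mathcal{H}(\varSigma_g)$ with $q([\tilde{\Phi}]) = \psi$ and $\lambda([\tilde{\Phi}]) = \lambda(\psi) = \delta_{0,2g+2}$, which forces $\delta(\mathcal{H}(\varSigma_g)) \le \delta_{0,2g+2}$. Combining the two inequalities gives the equality.

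I expect the only delicate point to be this second inequality, where one needs a pseudo-Anosov \emph{preimage} upstairs rather than merely the forward preservation of type recorded in Lemma~\ref{lem_NTtype}. This is supplied by the surjectivity of $q$ in Theorem~\ref{thm_BH} together with the lifting of the chosen pseudo-Anosov representative, so no work beyond Theorem~\ref{thm_BH} and Lemma~\ref{lem_NTtype} should be required.
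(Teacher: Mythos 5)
Your proof is correct and follows exactly the route the paper intends: the paper derives this corollary directly from Theorem~\ref{thm_BH} and Lemma~\ref{lem_NTtype}, with the lower bound coming from the type- and dilatation-preservation of $q$ and the upper bound from surjectivity of $q$ plus the lifting of a pseudo-Anosov representative through the branched double cover, which is precisely the argument in the paper's proof of Lemma~\ref{lem_NTtype}. Your write-up simply makes explicit what the paper leaves implicit.
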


Let $\mathrm{Mod}({\Bbb H}_g)$ be the group of 
 isotopy classes of orientation preserving homeomorphisms on ${\Bbb H}_g$. 
 We call $ \mathrm{Mod}({\Bbb H}_g)$ the {\it handlebody group}. 
We denote by $\mathrm{SHomeo}_+({\Bbb H}_g)$, 
the group of orientation preserving homeomorphisms on ${\Bbb H}_g$ 
which commute with $\mathcal{S}: {\Bbb H}_g \rightarrow {\Bbb H}_g$. 
Let $\mathcal{H}({\Bbb H}_g)$ be the subgroup of $\mathrm{Mod}({\Bbb H}_g)$ 
consisting of isotopy classes of elements in $\mathrm{SHomeo}_+({\Bbb H}_g)$. 
We call $\mathcal{H}({\Bbb H}_g)$ the {\it hyperelliptic handlebody group}.  
Abusing the notation, we also denote by $\iota \in \mathcal{H}({\Bbb H}_g)$, 
the mapping class of $\mathcal{S}: {\Bbb H}_g \rightarrow {\Bbb H}_g$. 
One can define a homomorphism 
$$ \mathrm{Mod}({\Bbb H}_g) \rightarrow \mathrm{Mod}(\varSigma_g) $$ 
which sends a mapping class $[\Psi]$ of an orientation preserving homeomorphism 
$\Psi: {\Bbb H}_g \rightarrow {\Bbb H}_g$ 
to the mapping class 
$[\Psi|_{\partial {\Bbb H}_g}]$ 
of $\Psi|_{\partial {\Bbb H}_g}: \partial {\Bbb H}_g \rightarrow \partial {\Bbb H}_g$. 
This homomorphism is injective (\cite[Theorem~3.7]{FM}),  
and not surjective (\cite[Section~3.12]{Suzuki}). 
We also call the  homomorphic image of $\mathrm{Mod}({\Bbb H}_g)$ in $\mathrm{Mod}(\varSigma_g)$ 
the  handlebody group, and 
also call the homomorphic image of $\mathcal{H}({\Bbb H}_g) $ in $\mathrm{Mod}(\varSigma_g)$, 
the  hyperelliptic handlebody group.  
As subgroups of $\mathrm{Mod}(\varSigma_g)$, 
we have 
$$\mathcal{H}({\Bbb H}_g) = \mathrm{Mod}({\Bbb H}_g) \cap \mathcal{H} (\varSigma_g).$$ 
We have 
$\mathrm{Mod}({\Bbb H}_2)= \mathcal{H}({\Bbb H}_2)$ 
since $\mathrm{Mod}(\varSigma_2) = \mathcal{H}(\varSigma_2)$ holds. 
If $g \ge 2$, then $\mathrm{Mod}({\Bbb H}_g)$ is of infinite index in $\mathrm{Mod}(\varSigma_g)$; 
If $g \ge 3$, then $\mathcal{H} ({\Bbb H}_g)$ is of infinite index in $\mathrm{Mod}({\Bbb H}_g)$, 
see Remark~\ref{rem_index} in Appendix~\ref{section_appendix}.

In the end of this section, 
we give a description of $\mathcal{H}({\Bbb H}_g)$ 
via $SW_{2g+2}$. 
Any element of $\mathrm{SHomeo}_+({\Bbb H}_g)$ 
fixes the fixed points set of $\mathcal{S}: {\Bbb H}_g \rightarrow {\Bbb H}_g$ 
as a set, 
and hence such an element descends to a homeomorphism on ${\Bbb H}_g/\mathcal{S} \simeq D^3$ 
which preserves ${\bf A}$ as a set. 
Thus a map 
$$Q: \mathcal{H}({\Bbb H}_g) \rightarrow SH_{2g+2}$$ 
is obtained.  
When we think $\mathcal{H}({\Bbb H}_g)$ as the subgroup of $\mathrm{Mod}(\varSigma_g)$ 
(resp. $SH_{2g+2}$ as the subgroup of $\mathrm{Mod}(\varSigma_{0, 2g+2})$),  
we have the restriction of the homomorphism $q$ in Theorem~\ref{thm_BH}
\begin{equation}
\label{equation_BH}
Q = q|_{\mathcal{H}({\Bbb H}_g)}: \mathcal{H}({\Bbb H}_g) \rightarrow SH_{2g+2}. 
\end{equation}
The following theorem, which is a version of Birman-Hilden's theorem~\ref{thm_BH},   
is useful.

\begin{thm}
\label{thm_HiKi}
For $g \ge 2$, 
the map $Q: \mathcal{H}({\Bbb H}_g) \rightarrow SH_{2g+2}$ 
is well-defined, and it is a surjective homomorphism with the kernel $\langle \iota  \rangle$. 
In particular, 
$$\mathcal{H}({\Bbb H}_g) / \langle \iota  \rangle \simeq 
SH_{2g+2}.$$  
\end{thm}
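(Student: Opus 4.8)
The plan is to deduce the statement from the Birman--Hilden theorem (Theorem~\ref{thm_BH}) together with the branched double covering $p\colon{\Bbb H}_g\to{\Bbb H}_g/\mathcal{S}\simeq D^3$, branched along the wickets ${\bf A}$, whose deck group is $\langle\mathcal{S}\rangle\simeq{\Bbb Z}/2{\Bbb Z}$. First I would observe that a symmetric handlebody homeomorphism restricts on $\partial{\Bbb H}_g$ to a symmetric surface homeomorphism, so that under the injection $\mathrm{Mod}({\Bbb H}_g)\hookrightarrow\mathrm{Mod}(\varSigma_g)$ one has $\mathcal{H}({\Bbb H}_g)\subseteq\mathcal{H}(\varSigma_g)$ and $Q=q|_{\mathcal{H}({\Bbb H}_g)}$, as recorded in (\ref{equation_BH}). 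Since $q$ is a well-defined homomorphism by Theorem~\ref{thm_BH}, so is $Q$ as a map into $\mathrm{Mod}(\varSigma_{0,2g+2})$, and the only new point for well-definedness is that the image lands in $SH_{2g+2}$. This is immediate: a representative $\Psi\in\mathrm{SHomeo}_+({\Bbb H}_g)$ of $\phi$ descends through $p$ to $\bar\Psi\in\mathrm{Homeo}_+(D^3,{\bf A})$, and $Q(\phi)$ is the class of $\bar\Psi$ in $\mathrm{Mod}(D^3,{\bf A})=SH_{2g+2}$, which is independent of the chosen symmetric representative by the injectivity of $\mathrm{Mod}(D^3,{\bf A})\to\mathrm{Mod}(\varSigma_{0,2g+2})$ (Proposition~\ref{prop:uniqueness}).

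For surjectivity I would argue by lifting. Given $\psi\in SH_{2g+2}$, choose $\bar\Psi\in\mathrm{Homeo}_+(D^3,{\bf A})$ representing it. The covering $p$ is classified by the monodromy $\pi_1(D^3\setminus{\bf A})\to{\Bbb Z}/2{\Bbb Z}$ sending each meridian of a component of ${\bf A}$ to the generator. Since $\bar\Psi$ preserves ${\bf A}$ and hence permutes these meridians, it preserves the monodromy, so it lifts to a homeomorphism $\Psi\colon{\Bbb H}_g\to{\Bbb H}_g$. Because $\langle\mathcal{S}\rangle$ has order two, the deck transformation $\Psi\mathcal{S}\Psi^{-1}$ is nontrivial and hence equals $\mathcal{S}$, so $\Psi$ commutes with $\mathcal{S}$; moreover the lift of an orientation-preserving homeomorphism through $p$ is again orientation preserving. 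Thus $\Psi\in\mathrm{SHomeo}_+({\Bbb H}_g)$, and since $\Psi$ descends to $\bar\Psi$ we get $Q([\Psi])=\psi$.

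For the kernel, note that $\mathcal{S}$ is the nontrivial deck transformation and so descends through $p$ to the identity of $(D^3,{\bf A})$; hence $Q(\iota)=1$ and $\langle\iota\rangle\subseteq\ker Q$. Conversely, if $Q(\phi)=1$ then, viewing $SH_{2g+2}$ as a subgroup of $\mathrm{Mod}(\varSigma_{0,2g+2})$, we obtain $q(\phi)=1$, whence $\phi\in\ker q=\langle\iota\rangle$ by Theorem~\ref{thm_BH}; as $\iota\in\mathcal{H}({\Bbb H}_g)$ this gives $\ker Q=\langle\iota\rangle$. Combining the three steps yields the isomorphism $\mathcal{H}({\Bbb H}_g)/\langle\iota\rangle\simeq SH_{2g+2}$.

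The step I expect to be the main obstacle is surjectivity, i.e.\ the lifting argument: one must verify carefully that an arbitrary homeomorphism of the pair $(D^3,{\bf A})$ respects the branched-covering data and lifts to a homeomorphism of ${\Bbb H}_g$ commuting with $\mathcal{S}$ and preserving orientation. The remaining assertions are essentially formal consequences of Theorem~\ref{thm_BH} and the injectivity of $\mathrm{Mod}(D^3,{\bf A})\to\mathrm{Mod}(\varSigma_{0,2g+2})$.
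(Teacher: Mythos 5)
Your proof is correct and follows essentially the same route as the paper: $Q$ is obtained as the restriction $q|_{\mathcal{H}({\Bbb H}_g)}$ of the Birman--Hilden homomorphism from Theorem~\ref{thm_BH}, the Hilden group is identified with $\mathrm{Mod}(D^3,{\bf A})$ via Proposition~\ref{prop:uniqueness}, and the branched double cover ${\Bbb H}_g \rightarrow D^3$ is used to descend and lift homeomorphisms, with $\iota \in \mathcal{H}({\Bbb H}_g)$ giving the kernel. The only difference is packaging: the paper routes well-definedness through Proposition~\ref{prop:sym-isotopy} (the handlebody analogue of the symmetric isotopy theorem, itself deduced from Propositions~\ref{prop:sym-isotopy-BH} and \ref{prop:uniqueness}) and leaves the lifting step behind surjectivity implicit, whereas you argue well-definedness directly and spell out the lifting criterion explicitly.
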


\noindent
For a proof of Theorem~\ref{thm_HiKi}, see Appendix~\ref{section_appendix}. 
By Theorems~\ref{thm_HiWi} and \ref{thm_HiKi}, 
we have 
$$\mathcal{H}({\Bbb H}_g) / \langle \iota \rangle \simeq SW_{2g+2}/  \langle \Delta^2 \rangle 
\simeq SH_{2g+2} .$$

\noindent
Lemma~\ref{lem_NTtype} and Theorem~\ref{thm_HiKi} imply the following. 

\begin{lem}
\label{lem_NTtype2}
We have $\delta(\mathcal{H}({\Bbb H}_g)) = \delta(SH_{2g+2})$ for $g \ge 2$. 
\end{lem}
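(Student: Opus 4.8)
The plan is to reduce the claimed equality of minima to an equality of dilatation sets, and to establish the latter directly from the surjective homomorphism $Q$ of Theorem~\ref{thm_HiKi} together with the Nielsen--Thurston type preservation of Lemma~\ref{lem_NTtype}. Write
$$
\mathrm{dil}(G) = \{\lambda(\phi) \mid \phi \in G \text{ is pseudo-Anosov}\}
$$
for $G = \mathcal{H}({\Bbb H}_g)$ and $G = SH_{2g+2}$. Since by definition $\delta(G)$ is the minimum of $\mathrm{dil}(G)$, it suffices to prove $\mathrm{dil}(\mathcal{H}({\Bbb H}_g)) = \mathrm{dil}(SH_{2g+2})$.

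For the inclusion $\subseteq$, recall from (\ref{equation_BH}) that $Q = q|_{\mathcal{H}({\Bbb H}_g)}$ and that $\mathcal{H}({\Bbb H}_g) = \mathrm{Mod}({\Bbb H}_g) \cap \mathcal{H}(\varSigma_g)$ is contained in $\mathcal{H}(\varSigma_g)$. Hence for any pseudo-Anosov $\phi \in \mathcal{H}({\Bbb H}_g)$, Lemma~\ref{lem_NTtype} applies and gives that $Q(\phi) = q(\phi)$ is pseudo-Anosov with $\lambda(\phi) = \lambda(Q(\phi))$; thus $\lambda(\phi) \in \mathrm{dil}(SH_{2g+2})$. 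For the reverse inclusion $\supseteq$, let $\psi \in SH_{2g+2}$ be pseudo-Anosov. By surjectivity of $Q$ (Theorem~\ref{thm_HiKi}) choose $\phi \in \mathcal{H}({\Bbb H}_g)$ with $Q(\phi) = \psi$; the two preimages of $\psi$ are $\phi$ and $\phi\iota$, since $\ker Q = \langle \iota \rangle$. I then claim $\phi$ is pseudo-Anosov: if instead $\phi$ were periodic or reducible, Lemma~\ref{lem_NTtype} would force $q(\phi) = \psi$ to be periodic or reducible, contradicting the choice of $\psi$. Hence $\phi$ is pseudo-Anosov, and the last assertion of Lemma~\ref{lem_NTtype} gives $\lambda(\phi) = \lambda(\psi)$, so $\lambda(\psi) \in \mathrm{dil}(\mathcal{H}({\Bbb H}_g))$.

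Combining the two inclusions gives $\mathrm{dil}(\mathcal{H}({\Bbb H}_g)) = \mathrm{dil}(SH_{2g+2})$, and passing to minima yields $\delta(\mathcal{H}({\Bbb H}_g)) = \delta(SH_{2g+2})$ for $g \ge 2$, as required. The only delicate point is the lifting step in the $\supseteq$ inclusion: one must rule out that a pseudo-Anosov class in $SH_{2g+2}$ arises as $Q$ of a periodic or reducible class upstairs, which is exactly what the type-preservation of Lemma~\ref{lem_NTtype} supplies (via lifting a periodic or reducible representative through the branched cover and its deck involution $\iota$). Everything else is formal. For definiteness one may also note that for $g \ge 2$ both groups do contain pseudo-Anosov elements, for instance the ones produced in Proposition~\ref{prop_main}, so that $\delta$ is genuinely defined and its common value exceeds $1$.
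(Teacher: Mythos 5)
Your proposal is correct and follows essentially the same route as the paper: the paper derives Lemma~\ref{lem_NTtype2} directly from Lemma~\ref{lem_NTtype} (type preservation and equality of dilatations under $q$) together with Theorem~\ref{thm_HiKi} (surjectivity of $Q$ with kernel $\langle \iota \rangle$), which is precisely the two-inclusion argument on dilatation sets that you spell out. Your explicit handling of the lifting step via the Nielsen--Thurston trichotomy is just an expanded version of what the paper leaves implicit.
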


\begin{center}
\begin{figure}
\includegraphics[width=5.2in]{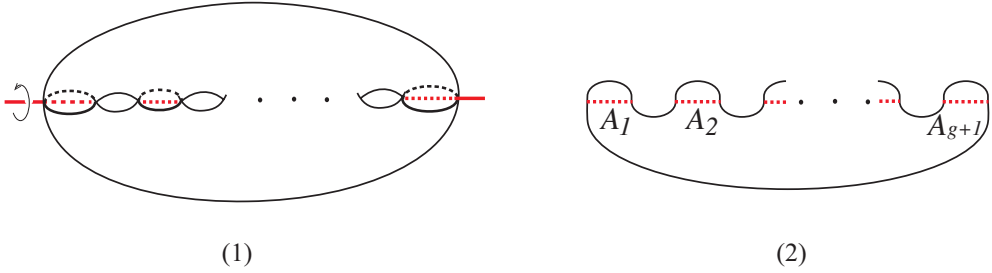}
\caption{(1) Hyperelliptic involution 
$\mathcal{S}: {\Bbb H}_g \rightarrow {\Bbb H}_g$ (on a handlebody ${\Bbb H}_g$) 
which is a rotation by $180$ degrees about the indicated axis. 
The fix points set of $\mathcal{S}$ is illustrated by dotted segments. 
The restriction 
$\mathcal{S}= \mathcal{S}|_{\partial {\Bbb H}_g}: \partial {\Bbb H}_g \rightarrow \partial {\Bbb H}_g$  
defines an involution on $\partial {\Bbb H}_g \simeq \varSigma_g$. 
(2) ${\Bbb H}_g/ \mathcal{S} \simeq D^3$ with ${\bf A}= A_1 \cup A_2 \cup \cdots \cup A_{g+1}$.}
\label{fig_handle}
\end{figure}
\end{center}

\subsection{Disk twists}
\label{subsection_DiskTwists}

We will discuss a method of constructing links in $S^3$ 
whose complements are the same. 
Let $L$ be a link in $S^3$. 
We denote a  tubular neighborhood  of $L$ by $\mathcal{N}(L)$, 
and  the exterior of $L$, that is $S^3 \setminus \mathrm{int}(\mathcal{N}(L))$ by $\mathcal{E}(L)$. 
Suppose that $L$ contains an unknot  $K \subset L$. 
Then $\mathcal{E}(K)$ (resp. $\partial \mathcal{E}(K)$)
is homeomorphic to a solid torus (resp. torus). 
We denote the link $ L\setminus K$ by $L_K$. 
We take a disk $D$ bounded by the longitude of $ \mathcal{N}(K)$. 
By using $D$, 
we define two homeomorphisms 
$$T= T_D:  \mathcal{E}(K) \rightarrow   \mathcal{E}(K)$$
called the {\it (left-handed) disk twist about} $D$ and 
$$H= H_D:   \mathcal{E}(L) 
(=  \mathcal{E}(K \cup L_K)) \rightarrow \mathcal{E}(K \cup T_D(L_K))$$ 
as follows. 
We cut $ \mathcal{E}(K)$ along $D$. 
 We have  resulting two sides obtained from $D$. 
Then we reglue the two sides by rotating either of the sides  $360$ degrees 
so that the mapping class of the restriction 
$T|_{\partial  \mathcal{E}(K)}: \partial \mathcal{E}(K)\rightarrow 
\partial \mathcal{E}(K)$ defines the left-handed Dehn twist about $\partial D$, 
see Figure~\ref{fig_left-handed}(1). 
Such an operation defines the former homeomorphism 
$T_D:  \mathcal{E}(K) \rightarrow   \mathcal{E}(K)$. 
If $m$ segments of $L_K$ pass through $D$, then 
$T(L_K)$ is obtained from $L_K$ by adding a full twist braid $\Delta_m^2$ near $D$. 
In the case  $m=2$, see Figure~\ref{fig_left-handed}(2). 
Notice that 
$T_D:  \mathcal{E}(K) \rightarrow   \mathcal{E}(K)$ determines the latter  homeomorphism 
$$H= H_D:  \mathcal{E}(L) 
(=  \mathcal{E}(K \cup L_K)) \rightarrow \mathcal{E}(K \cup T(L_K)).$$

For any integer $\ell \ne 0$, we have a homeomorphism of 
the $\ell$th power $T^{\ell} = T_D^{\ell}:  \mathcal{E}(K) \rightarrow   \mathcal{E}(K)$ 
so that 
$T^{\ell}|_{\partial  \mathcal{E}(K)}: \partial  \mathcal{E}(K) \rightarrow \partial  \mathcal{E}(K)$
is the $\ell$th power of the left-handed Dehn twist about $\partial D$. 
Observe that  $T^{\ell} = T_D^{\ell}$  converts $L = K \cup L_K$ into a link 
 $K \cup T^{\ell}(L_K)$ in $S^3$ 
 such that $S^3 \setminus L$ is homeomorphic to $S^3 \setminus (K \cup T^{\ell}(L_K))$. 
 We denote by $H_D^{\ell}$, a homeomorphism: $\mathcal{E}(L)(= \mathcal{E}(K \cup L_K)) \rightarrow \mathcal{E}(K \cup T^{\ell}(L_K))$.

\begin{center}
\begin{figure}
\includegraphics[width=5in]{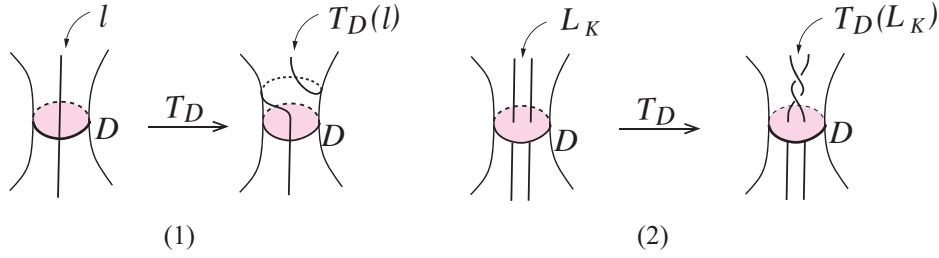}
\caption{(1) Action of $T_D$ on $\ell$, 
where $\ell$ is an arc on $\partial \mathcal{E}(K)$ 
which passes through $\partial D$. 
(2) Local picture of $L_K$ and its image $T_D(L_K)$. }
\label{fig_left-handed}
\end{figure}
\end{center}

The following remark is used 
in the proof of Proposition~\ref{prop_main}. 

\begin{rem}
\label{rem_LocalMove} 
Let $L$ be a link in $S^3$. 
Suppose that $L$ contains 
two unknotted components $K$ and $K'$ such that 
$K \cup K'$ is the Hopf link. 
Let $D$ be a disk bounded by the longitude of $\mathcal{N}(K)$. 
We assume that parallel $m \ge 1$ segments of $L_K \setminus K'$ 
pass through $D$, 
see Figure~\ref{fig_LocalMove}(1) in the case $m=2$. 
($L_K \setminus K'$ may intersect with the disk bounded by the longitude of $\mathcal{N}(K')$.) 
Pushing $D$  along the meridian of $\mathcal{N}(K)$,  
one can put the resulting disk $D$ as in Figure~\ref{fig_LocalMove}(2). 
The small circles in Figure~\ref{fig_LocalMove}(2) indicate the intersection 
between $L_K$ and $D$. 
Now we consider the disk twist $T$ about $D$, that is, 
we cut $\mathcal{E}(K)$ along $D$ and we reglue the two sides obtained from $D$ 
by rotating one of the sides by 360 degrees. 
In this case, one can choose the intersection point $D \cap K'$ as an origin of the rotation of $D$.  
As a result, we get a local diagram of the link $T(L_K)$ shown in Figure~\ref{fig_LocalMove}(3) 
so that  $T= T_D$ fixes $K'$. 
(See $K'$ and $T_D(K')$ in Figure~\ref{fig_LocalMove}(2)(3).) 
\end{rem}

\begin{center}
\begin{figure}
\includegraphics[width=5.3in]{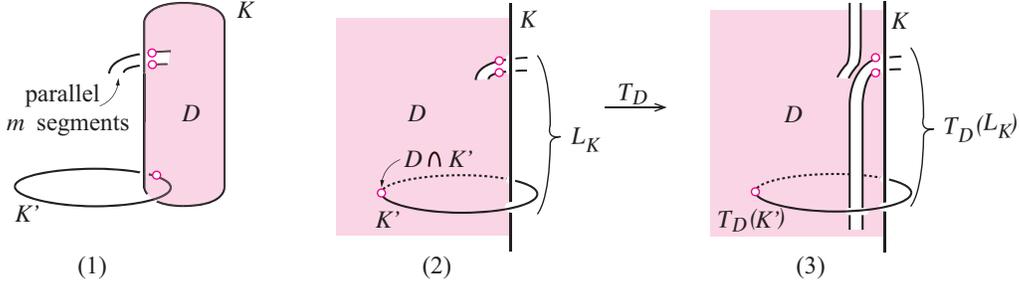} 
\caption{Small circles in (1), (2) (resp. (3)) 
indicate the intersection between $L_K$ and $D$ (resp. between $T_D(L_K)$ and $D$).}
\label{fig_LocalMove}
\end{figure}
\end{center}

\section{Proof of Proposition~\ref{prop_main}} 
\label{section_proofs}

 We introduce a sequence of braids $w_{2n} \in SW_{2n}$. 
Let 
$$w_6 
=\sigma_{2}^{-1} \sigma_1^{-1} \sigma_3 \sigma_2 \sigma_4 \sigma_3^2 \sigma_4 \sigma_3  
= \sigma_{2}^{-1} \sigma_1^{-1} \sigma_3 \sigma_2 \sigma_4 \sigma_3 \sigma_4 \sigma_3 \sigma_4  
\in SB_{(5)},$$
see Figure~\ref{fig_wicket6}(1). 
(For the definition of the subgroup $SB_{(m-1)}$ of $SB_m$, see Section~\ref{subsection_Sbraid}.) 
Since $^{w_6}{\bf A}$ is isotopic to ${\bf A}$ relative to $\partial {\bf A}$, 
we have $w_6  \in SW_6$. 
In order to define a sequence of braids $w_8, w_{10}, \cdots$, 
we introduce $x_{4n+8}, y_{4n+8} \in SB_{(4n+7)}$ 
for each  $n \ge 0$ as follows. 
\begin{eqnarray*}
x_{4n+8} &=& 
\sigma_5 \sigma_2^{-1} \sigma_1^{-1} 
(\sigma_3 \sigma_4 \cdots \sigma_{4n+5})
(\sigma_2 \sigma_3 \cdots \sigma_{4n+4}) \sigma_{4n+6} (\sigma_{4n+5})^2 \sigma_{4n+6}, 
\\
 y_{4n+8} &=& 
 (\sigma_1 \sigma_2 \cdots \sigma_{4n+5})^4 
 \sigma_{4n+6}  \sigma_{4n+5} \sigma_{4n+4} (\sigma_{4n+3})^2  \sigma_{4n+4}  \sigma_{4n+5} \sigma_{4n+6}, 
\end{eqnarray*}
see Figure~\ref{fig_z_braid}(1)(2). 
It is straightforward to check that 
$^{b}{\bf A}$ is isotopic to ${\bf A}$ relative to $\partial {\bf A}$ 
when $b= x_{4n+8}$, $y_{4n+8}$. 
Thus $x_{4n+8}$, $y_{4n+8} \in SW_{4n+8}$. 
We let 
$$w_{4n+8} = x_{4n+8} (y_{4n+8})^n \in SB_{(4n+7)} \cap SW_{4n+8},$$
where $(y_8)^0 = 1 \in SB_{(7)}$.   
For example, in the case of $n=0$, 
$$w_8= x_8 (y_8)^0 = \sigma_5 \sigma_2^{-1} \sigma_1^{-1} 
\sigma_3 \sigma_4 \sigma_{5}
\sigma_2 \sigma_3  \sigma_{4} \sigma_{6} \sigma_{5}^2 \sigma_{6},$$ 
see Figure~\ref{fig_wicket6}(2). 
Notice that the last two strings 
($(4n+7)$th and $(4n+8)$th strings) of both $x_{4n+8}$ and $y_{4n+8}$ 
define the identity $1 \in SB_2$, 
see Figure~\ref{fig_z_braid}(1)(2). 
Thus we obtain the $(4n+6)$-spherical braid 
by removing the last two strings from $w_{4n+8}$. 
In the case $n \ge 1$, we denote by $w_{4n+6}$, the resulting $(4n+6)$-braid. 
Said differently if we let $x_{4n+6}$ (resp. $y_{4n+6}$) be the $(4n+6)$-spherical braid 
obtained from $x_{4n+8}$ (resp. $y_{4n+8}$) 
by removing the last two strings from $x_{4n+8}$ (resp. $y_{4n+8}$), 
then $w_{4n+6}$ is given by 
$$w_{4n+6}= x_{4n+6}(y_{4n+6})^n,$$ 
see Figure~\ref{fig_z_braid}(3)(4).  
Clearly $w_{4n+6} \in SW_{4n+6}$, 
since $w_{4n+8} \in SW_{4n+8}$. 

\begin{rem}
The braid $w_6 \in SW_6$ is not the same as 
the braid  which is obtained from $w_8$ as above. 
The latter braid is not used in the rest of the paper.  
\end{rem}

\begin{center}
\begin{figure}
\includegraphics[width=3in]{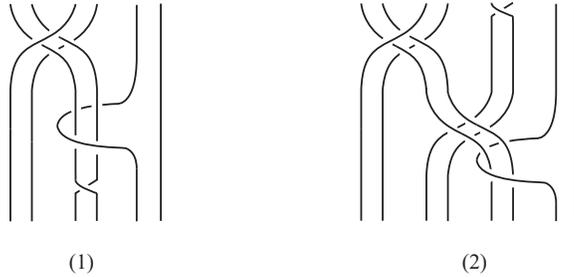}
\caption{(1) $w_6 \in SB_{(5)} \cap SW_6$. 
(2) $w_8 \in SB_{(7)} \cap SW_8$.}
\label{fig_wicket6}
\end{figure}
\end{center}

\begin{center}
\begin{figure}
\includegraphics[width=5.5in]{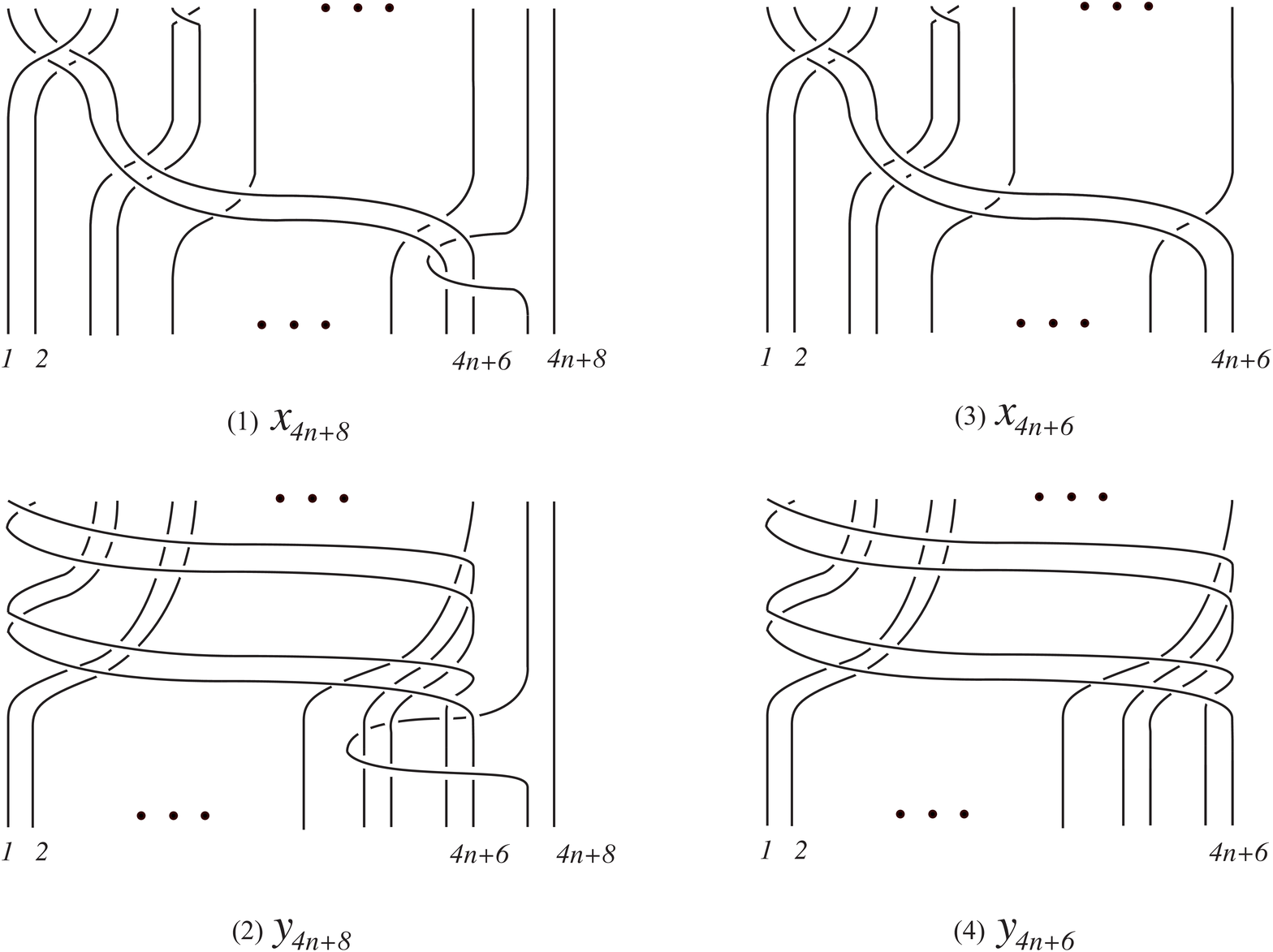}
\caption{(1) $x_{4n+8}$ and (2) $y_{4n+8} \in SB_{(4n+7)} \cap SW_{4n+8}$. 
(3) $x_{4n+6}$ and (4) $y_{4n+6} \in SW_{4n+6}$. 
(In (1)--(4), dots indicate parallel strings.)} 
\label{fig_z_braid}
\end{figure}
\end{center}

In the proof of the next lemma, 
we use some basic facts on {\it train tracks} of pseudo-Anosov homeomorphisms. 
See \cite{BH,PaPe} for more details. 
For a quick review about train tracks, 
see \cite[Section~2.1]{Kin} 
which contains terms and basic facts needed in this paper.

\begin{lem}
\label{lem_6braid}
The braid $\underline{w_6} \in B_5 $  is pseudo-Anosov, 
and $\lambda(\underline{w_6}) $ equals  $\kappa$, 
where $\kappa$ is the constant given in Proposition~\ref{prop_main}. 
\end{lem}

\begin{proof}
We choose a train track $\tau \subset D_5$ 
with non-loop edges $p_1, \cdots, p_6$ 
as in Figure~\ref{fig_train_suspend}(1), 
where $c_1, \cdots, c_5$ are punctures of $D_5$. 
Each component of $D_5 \setminus \tau$ 
is either a $1$-gon with one puncture, a $3$-gon (without punctures), or 
a $1$-gon containing the boundary of the disk 
(see the illustration of $\tau \subset D_5 \times \{0\}$ on the bottom of Figure~\ref{fig_train_suspend}(3)). 
We consider the braid $\underline{w_6}$ with base points $c_1, \cdots, c_5$. 
We push $\tau $ on $ D_5$ 
along the suspension flow on $S^3 \setminus \mathrm{br}(\underline{w_6})$, 
then we get the train track $\tau'$ on $D_5 \times \{1\} $ 
illustrated in Figure~\ref{fig_train_suspend}(2). 
This implies that 
there exists a representative $f \in \Gamma(\underline{w_6})$ such that $\tau'= f(\tau)$. 
Here, the edge $(p_i)$ of $\tau'$ in Figure~\ref{fig_train_suspend}(2) 
denotes the image of $p_i$ under $f$.

We see that  $f(\tau)$ is carried by $\tau$, 
and hence $\tau$ is an invariant train track\footnote{One can use the software Trains \cite{Hall} 
to find  invariant train tracks for pseudo-Anosov braids.} 
for $\Gamma(\underline{w_6})$.
Let $N(\tau)$ be a fibered (tie) neighborhood of $\tau$ 
whose fibers (ties) are segment given by 
a retraction $\mathfrak{R}: \mathcal{N}(\tau) \to \tau$. 
Then we get a train track representative $\mathfrak{p}= \mathfrak{R} \circ f|_{\tau}: \tau \rightarrow \tau$ 
for $\Gamma(\underline{w_6})$. 
It turns out that 
$p_1, \cdots, p_6$ are \footnote{We recall the terminology in Bestvina-Handel~\cite{BH}. 
An edge $e$ of $\tau$ for a train track representative $\mathfrak{p}: \tau \rightarrow \tau$ is called 
{\it infinitesimal} if $e$ is eventually periodic under $\mathfrak{p}$. 
Otherwise $e$ is called {\it real}.}real edges for $\mathfrak{p}$, 
and other loop edges of $\tau$ are periodic under $\mathfrak{p}$, and 
hence they are infinitesimal edges. 
The incident matrix $M_{\mathfrak{p}}$ with respect to real edges 
is given by 
$$M_{\mathfrak{p}} = \left[\begin{array}{cccccc}2 & 0 & 0 & 0 & 0 & 1 \\
2 & 0 & 0 & 2 & 1 & 0 \\
1 & 0 & 1 & 1 & 1 & 0 \\
0 & 0 & 2 & 1 & 2 & 0 \\
1 & 0 & 0 & 0 & 0 & 0 \\
0 & 1 & 0 & 0 & 0 & 0\end{array}\right].$$
For example, 
we get 
$\ ^{t}\!\left[\begin{array}{cccccc}0 & 0 & 1 & 2 & 0 & 0\end{array}\right]$ 
for the  $3$rd column of $M_{\mathfrak{p}} $, 
since $f(p_3)$ passes through $p_3$ once and $p_4$ twice in either direction. 
(See the edge path $(p_3)$ in Figure~\ref{fig_train_suspend}(2).) 
Since the $5$th power $M_{\mathfrak{p}}^5$ is positive,  $M_{\mathfrak{p}}$ is Perron-Frobenius and we conclude that 
$\underline{w_6}$ is  pseudo-Anosov. 
The characteristic polynomial of $M_{\mathfrak{p}}$ equals 
$$ (t-1)^2 (t^4-2t^3 -2t^2-2t+1),$$ 
and the largest root $\kappa$ of  the second factor  gives us 
$ \lambda(\underline{w_6})$. 
\end{proof}

\noindent
The type of singularities of the (un)stable foliation for the pseudo-Anosov homeomorphism 
$\Phi = \Phi_{w_6}: \varSigma_{0,6} \rightarrow \varSigma_{0,6}$ 
can be read from the topological types of components of $\varSigma_{0,6} \setminus \tau$. 
See \cite[Section~3.4]{BH} which describes a construction of invariant measured foliations. 
Notice that two component of $\varSigma_{0,6} \setminus \tau$ are non punctured $3$-gons. 
The other components are  once punctured $1$-gons. 
Thus exactly two points in the interior of $\varSigma_{0,6}$ have $3$ prongs and 
each puncture of $\varSigma_{0,6}$ has a $1$ prong.

\begin{center}
\begin{figure}
\includegraphics[width=4.8in]{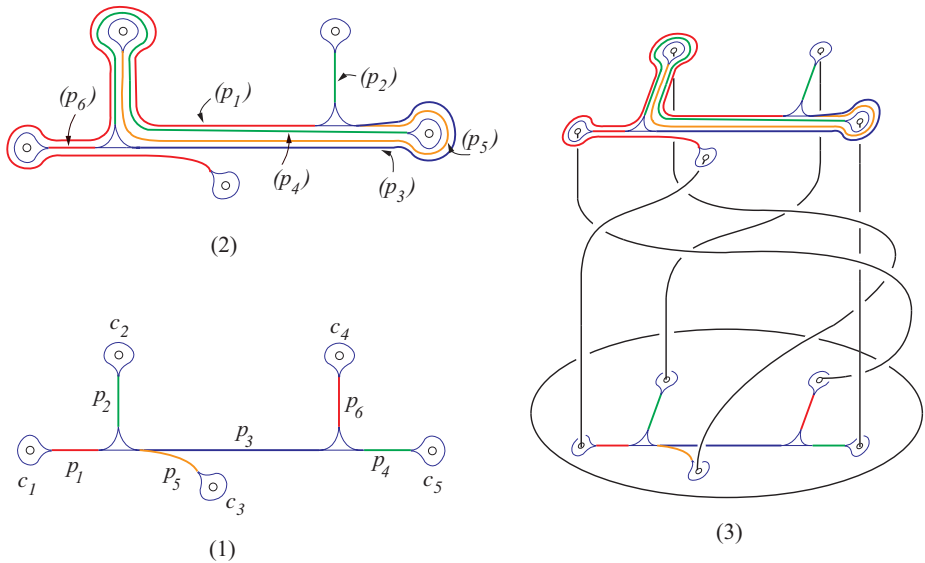}
\caption{(1) 
$\tau \subset D_5$. 
(2) $\tau'  \subset D_5$. 
(3) We get 
$\tau' \subset D_5 \times \{1\}$ by pushing $\tau \subset D_5 \times \{0\}$ 
along the suspension flow on $S^3 \setminus \mathrm{br}(\underline{w_6})$.}
\label{fig_train_suspend}
\end{figure}
\end{center}

Observe that 
$\mathrm{br}(\underline{w_6})$ is the link with $3$ components. 
The following lemma says that complements of both links 
$\mathrm{br}(\underline{w_6})$ and $L_0$ (Figure~\ref{fig_L10n95}) are the same. 
\begin{lem}
\label{lem_L10n95}
${\Bbb T}_{\underline{w_6}} $ 
 is homeomorphic to $S^3 \setminus L_0$. 
 In  particular $S^3 \setminus L_0$ is a hyperbolic fibered $3$-manifold. 
\end{lem}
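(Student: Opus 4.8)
The plan is to reduce the homeomorphism to an explicit identification of links in $S^3$. By the discussion in Section~\ref{subsection_relation} we already have ${\Bbb T}_{\underline{w_6}}\simeq S^3\setminus\mathrm{br}(\underline{w_6})$, where $\mathrm{br}(\underline{w_6})$ is the union of the closed braid $\mathrm{cl}(\underline{w_6})$ and its braid axis. Since isotopic links have homeomorphic complements, it suffices to prove that $\mathrm{br}(\underline{w_6})$ is isotopic to $L_0$ in $S^3$.

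First I would draw the diagram of $\mathrm{br}(\underline{w_6})$ directly from the word $\underline{w_6}=\sigma_2^{-1}\sigma_1^{-1}\sigma_3\sigma_2\sigma_4\sigma_3^2\sigma_4\sigma_3$. As a consistency check on the component count, the permutation induced by $\underline{w_6}$ is the product of transpositions $(23)(12)(34)(23)(45)(34)(34)(45)(34)$, which collapses to a single $4$-cycle on $\{1,2,3,4\}$ together with the fixed point $5$; hence $\mathrm{cl}(\underline{w_6})$ has two components, and adjoining the axis yields the three-component link expected for $L_0$. I would then simplify this diagram by planar isotopies, Reidemeister moves, and conjugations of the braid word (conjugation of $\underline{w_6}$ alters the closed braid by an ambient isotopy fixing the axis, hence preserves $\mathrm{br}(\underline{w_6})$ up to isotopy), with the goal of reaching the standard $10$-crossing diagram of $L10n95$.

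The decisive step, and the main obstacle, is the unambiguous matching of the simplified diagram with $L_0=L10n95$. I would do this by comparing the reduced diagram with the tabulated diagram in the Knot Atlas \cite{KnotAtlas}, and, as an independent confirmation that leaves no room for a misdrawn crossing, by checking that computable link invariants agree, for instance the multivariable Alexander polynomial, or the hyperbolic volume together with the fundamental group read off from a triangulation of $S^3\setminus\mathrm{br}(\underline{w_6})$. Here one must keep in mind that tabulated links are specified only up to mirror image and relabeling of components; this causes no difficulty, since the conclusion we need is a homeomorphism of complements, which is insensitive to both.

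Finally, the ``in particular'' assertion is immediate from the earlier lemmas: $\underline{w_6}$ is pseudo-Anosov by Lemma~\ref{lem_6braid}, so ${\Bbb T}_{\underline{w_6}}$ is a hyperbolic fibered $3$-manifold by Lemma~\ref{lem_bar}, and transporting this property across the homeomorphism ${\Bbb T}_{\underline{w_6}}\simeq S^3\setminus L_0$ shows that $S^3\setminus L_0$ is a hyperbolic fibered $3$-manifold as well.
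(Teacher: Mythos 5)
Your reduction is to a statement that is strictly stronger than the lemma, and in fact false: $\mathrm{br}(\underline{w_6})$ is \emph{not} isotopic to $L_0$. The lemma only asserts a homeomorphism of complements, and for links (unlike knots) this is genuinely weaker than isotopy. To see that no isotopy exists, compute pairwise linking numbers of $\mathrm{br}(\underline{w_6})$. Its three components are the braid axis, the closure of the fifth strand, and the closure of the remaining four strands (your permutation check is correct). The axis links the $4$-strand component $4$ times and the fifth strand once; the fifth strand is involved in exactly four crossings of the braid word, all positive (the two $\sigma_4$'s and the two crossings of $\sigma_3^2$), so it links the $4$-strand component $2$ times. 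Thus the multiset of absolute linking numbers is $\{1,2,4\}$. Any $3$-component link realizing these numbers needs at least $2(1+2+4)=14$ crossings, so $\mathrm{br}(\underline{w_6})$ cannot be isotopic to the $10$-crossing link $L_0=L10n95$, whose linking numbers are $\{1,2,2\}$. Consequently your ``decisive step'' can never be completed: no sequence of Reidemeister moves will match the diagrams, and the multivariable Alexander polynomial comparison you propose would expose the discrepancy (linking numbers are visible in it) rather than confirm agreement. Only complement invariants such as the hyperbolic volume would agree --- but agreement of those certifies exactly the homeomorphism of complements, which is what has to be proved by other means, not a consequence of a (nonexistent) isotopy.

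The mechanism that bridges this gap is the disk twist of Section~\ref{subsection_DiskTwists}, and this is how the paper argues: $L_0$ contains unknotted components $K$ and $K^0$ forming a Hopf link; one takes the disk $D$ bounded by the longitude of $\mathcal{N}(K)$, through which $L_0\setminus K$ passes in three points (one from $K^0$, two parallel ones from the third component), and applies the twist $T_D$ as in Remark~\ref{rem_LocalMove}. The twist changes the link --- it is precisely what turns the linking number $2$ into $4$ --- but it induces a homeomorphism of exteriors, and the resulting link $K\cup T_D(L_0\setminus K)$ is recognized, by drawing the picture (Figure~\ref{fig_L10n95_2}), to be $\mathrm{br}(\underline{w_6})$. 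This yields $S^3\setminus L_0\simeq S^3\setminus\mathrm{br}(\underline{w_6})={\Bbb T}_{\underline{w_6}}$ without any claim of isotopy. Your final paragraph is fine: given the homeomorphism, hyperbolicity and fiberedness of $S^3\setminus L_0$ do follow from Lemma~\ref{lem_6braid} and Lemma~\ref{lem_bar} exactly as you say; but the core of the lemma requires the disk-twist argument, not a link isotopy.
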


\begin{proof}
We use another diagram of $L_0$ illustrated in Figure~\ref{fig_L10n95_2}(1). 
The link $L_0$ contains two unknots 
$K$ and $K^0$ so that $K \cup K^0$ is the Hopf link. 
We take the disk $D$ bounded by the longitude of $\mathcal{N}(K)$. 
We may assume that $L_0 \setminus K (= (L_0)_K)$ intersect with $D$ at the three points indicated by 
small circles in  the same figure. 
We apply the argument (in the case $m=2$) 
of Remark~\ref{rem_LocalMove}, 
and consider the disk twist about $D$, 
see Figure~\ref{fig_L10n95_2}(2). 
It turns out that  
$K \cup T_D (L_0 \setminus K)$ is of the form $\mathrm{br}(\underline{w_6})$, 
see Figure~\ref{fig_L10n95_2}(3).  
Thus $S^3 \setminus L_0$ is homeomorphic to 
$S^3 \setminus \mathrm{br}(\underline{w_6}) (= S^3 \setminus (K \cup T_D (L_0 \setminus K)))$. 
Since ${\Bbb T}_{\underline{w_6}}$ is homeomorphic to $ S^3 \setminus \mathrm{br}(\underline{w_6})$ 
and $\underline{w_6}$ is pseudo-Anosov by Lemma~\ref{lem_6braid}, 
we complete the proof. 
\end{proof}

\begin{center}
\begin{figure}
\includegraphics[width=5.5in]{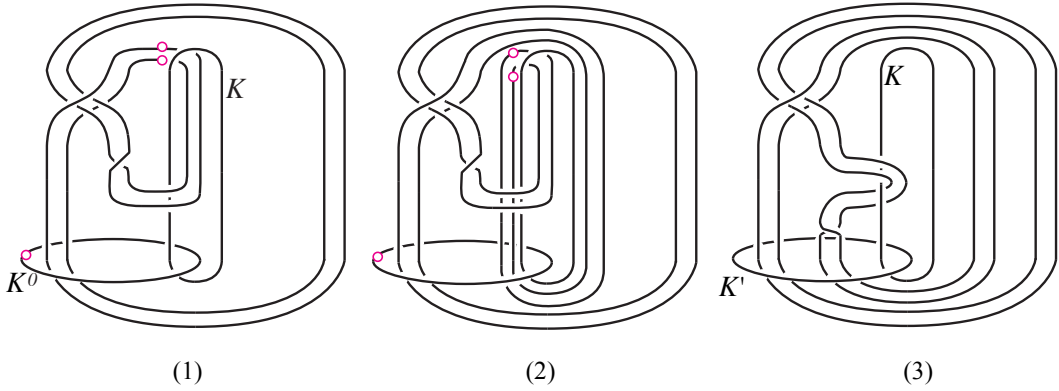}
\caption{(1) A diagram of $L_0$. 
($K \cup K^0$ is the Hopf link.)
(2) $K \cup T_D (L_0 \setminus K)$. 
(3) $\mathrm{br}(\underline{w_6})$.} 
\label{fig_L10n95_2}
\end{figure}
\end{center}

Let $F_{g,n}$ be a compact, connected, orientable surface of genus $g$ with $n$ boundary components. 
Let $M_0$ be the exterior of the link $L_0$. 
By Lemma~\ref{lem_L10n95}, we let 
$a_6 \in H_2(M_0, \partial M_0; {\Bbb Z})$ be the homology class of the $F_{0,6}$-fiber 
of the fibration on $M_0$ whose monodromy is described by $w_6 \in SB_6$.

\begin{lem} 
\label{lem_B_Pro}
${\Bbb T}_{\underline{w_{4n+8}}}$ 
 is homeomorphic to ${\Bbb T}_{\underline{w_6}}$ 
for each $n \ge 0$. 
In particular  $\underline{w_{4n+8}} \in B_{4n+7}$ is pseudo-Anosov 
for each $n \ge 0$. 
\end{lem}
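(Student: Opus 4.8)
The plan is to establish the homeomorphism ${\Bbb T}_{\underline{w_{4n+8}}} \simeq {\Bbb T}_{\underline{w_6}}$ directly by the disk twist technique of Section~\ref{subsection_DiskTwists}, and then to read off the pseudo-Anosov conclusion from the hyperbolicity of $S^3 \setminus L_0$ recorded in Lemma~\ref{lem_L10n95}. Since ${\Bbb T}_{\underline{w_{4n+8}}} \simeq S^3 \setminus \mathrm{br}(\underline{w_{4n+8}})$, it suffices to exhibit the braided link $\mathrm{br}(\underline{w_{4n+8}})$ as the image of $L_0 = \mathrm{br}(\underline{w_6})$ under a sequence of disk twists: the defining property $S^3 \setminus L \simeq S^3 \setminus (K \cup T_D^{\ell}(L_K))$ of a disk twist then yields the homeomorphism at no further cost.

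First I would fix the twisting data. Starting from the diagram of $L_0$ used in the proof of Lemma~\ref{lem_L10n95}, in which $K \cup K^0$ appears as a Hopf sublink, I would single out an unknotted component together with a disk $D$ bounded by the longitude of its tubular neighborhood, positioned so that the relevant parallel strands of $\mathrm{br}(\underline{w_{4n+8}})$ run through $D$ as in Remark~\ref{rem_LocalMove}. The bookkeeping of Remark~\ref{rem_LocalMove} --- which guarantees that a chosen Hopf-partner component is \emph{fixed} by the twist --- is what keeps the remaining part of the link under control while the twist acts.

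I would then argue by induction on $n$. The base case asks that ${\Bbb T}_{\underline{w_8}} \simeq {\Bbb T}_{\underline{w_6}}$, which I would verify by a single disk twist carrying $\mathrm{br}(\underline{w_6})$ to $\mathrm{br}(\underline{w_8})$, in the same diagrammatic spirit as Lemma~\ref{lem_L10n95}. For the inductive step I would show that one further disk twist about $D$ converts $\mathrm{br}(\underline{w_{4n+8}})$ into $\mathrm{br}(\underline{w_{4(n+1)+8}})$; here the point is that a single twist simultaneously accounts for the appearance of one more factor $y_{4n+8}$ and for the enlargement of the $x$-part as the strand number increases by four. Invoking the disk twist property at each step gives ${\Bbb T}_{\underline{w_{4n+8}}} \simeq {\Bbb T}_{\underline{w_6}} \simeq S^3 \setminus L_0$ for every $n \ge 0$.

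Finally, since $S^3 \setminus L_0$ is hyperbolic by Lemma~\ref{lem_L10n95} and ${\Bbb T}_{\underline{w_{4n+8}}}$ is by definition the mapping torus of the monodromy $c(\Gamma(\underline{w_{4n+8}}))$, Thurston's hyperbolization theorem forces this monodromy, i.e.\ $\underline{w_{4n+8}} \in B_{4n+7}$, to be pseudo-Anosov; by Lemma~\ref{lem_bar} the same then holds for $w_{4n+8} \in SB_{(4n+7)}$. I expect the main obstacle to lie entirely in the diagrammatic verification of the inductive step: matching the algebra of the word $x_{4n+8}(y_{4n+8})^n$ against the geometry of the twists. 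The delicate feature is that the strand count grows with $n$, so the disk twist must be read not as inserting a fixed full twist but as acting on a bundle of parallel strands whose cardinality grows; keeping track of how the twisted strands re-close around the braid axis, and confirming that the outcome is precisely $x_{4(n+1)+8}(y_{4(n+1)+8})^{n+1}$, is the one computation that genuinely has to be carried out by hand.
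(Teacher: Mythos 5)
Your overall frame is the same as the paper's --- realize $\mathrm{br}(\underline{w_{4n+8}})$ from $\mathrm{br}(\underline{w_6})$ by disk twists, conclude ${\Bbb T}_{\underline{w_{4n+8}}}\simeq S^3\setminus L_0$, and get pseudo-Anosov from hyperbolicity via Lemma~\ref{lem_bar} --- and your base case is exactly the paper's $n=0$ case (one twist about the disk bounded by the closure of the last string of $\underline{w_6}$). But your inductive step is not merely the ``computation to be carried out by hand'' that you defer; it is false. Here is a concrete obstruction. A disk twist about a component $X$ of a $3$-component link $X\cup Y\cup Z$ preserves $\mathrm{lk}(X,Y)$ and $\mathrm{lk}(X,Z)$ and changes only $\mathrm{lk}(Y,Z)$, by $\pm\,\mathrm{lk}(X,Y)\,\mathrm{lk}(X,Z)$; so a single twist alters exactly one of the three pairwise linking numbers. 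Now, $\mathrm{br}(\underline{w_{4n+8}})$ is a $3$-component link: the closure of $\underline{w_{4n+8}}$ consists of the last string (which picks up $4$ positive crossings in $\underline{x_{4n+8}}$ and $8$ in each factor $\underline{y_{4n+8}}$) together with a $(4n+6)$-string component, plus the axis. Its pairwise linking numbers are therefore $1$ (axis, last string), $4n+6$ (axis, big component), and $\tfrac{4+8n}{2}=4n+2$ (last string, big component). Passing to $\mathrm{br}(\underline{w_{4n+12}})$ replaces the multiset $\{1,\,4n+2,\,4n+6\}$ by $\{1,\,4n+6,\,4n+10\}$: \emph{two} of the three entries change, each by $4$. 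Checking the six possible single-twist moves (three components, two handednesses): twisting about the axis turns $4n+2$ into $|4n+2\pm(4n+6)|$, twisting about the last-string component turns $4n+6$ into $|4n+6\pm(4n+2)|$, and twisting about the big component changes $1$ by $\pm(4n+2)(4n+6)$; none of these yields the target multiset. So no single disk twist, about any component, converts $\mathrm{br}(\underline{w_{4n+8}})$ into $\mathrm{br}(\underline{w_{4(n+1)+8}})$. The same invariant kills the stronger reading of your plan (iterating one fixed disk $D$): the links $K\cup T_D^{\,n+1}(\mathrm{br}(\underline{w_6})\setminus K)$ have linking numbers $\{1,\,2,\,2n+6\}$, which do not match $\mathrm{br}(\underline{w_{4n+8}})$ for $n\ge 1$.

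The paper's actual proof is direct (not inductive) and uses, for each $n$, twists about \emph{two distinct disks}. First it applies the $n$-th power of the disk twist about the disk $D'$ bounded by the braid axis $K'$ of $\mathrm{br}(\underline{w_6})$; this keeps the $5$-braid picture and produces $\mathrm{br}(\underline{w_6}\Delta^{2n})$, i.e.\ it inserts a full twist $\Delta^{2n}$ into the closed braid. Second, it applies \emph{one} disk twist about the disk $D''$ bounded by the unknotted closure $K''$ of the last string of $\underline{w_6}\Delta^{2n}$, a disk meeting the remaining strands in $2+4n$ points; it is this twist that trades the accumulated full twists on few strands for the long braid on $4n+7$ strands, giving $K''\cup T_{D''}\bigl(\mathrm{br}(\underline{w_6}\Delta^{2n})\setminus K''\bigr)=\mathrm{br}(\underline{w_{4n+8}})$. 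The composition $H_{D''}\circ H_{D'}^{n}$ is then the desired homeomorphism, and the pseudo-Anosov conclusion follows as you describe. Any repair of your induction must likewise use at least two twists about two different components per step; the linking-number count above shows it cannot be compressed to one.
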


\begin{proof}
We prove that 
$S^3 \setminus \mathrm{br}(\underline{w_{4n+8}})$ is homeomorphic to 
$S^3 \setminus \mathrm{br}(\underline{w_6})$. 
To do this, we use Remark~\ref{rem_LocalMove}  twice. 
The braided link $\mathrm{br}(\underline{w_6})$ contains two unknotted components, 
the braid axis $K'$ and the closure of the last string of $\underline{w_6}$, say $K$ 
so that $K \cup K'$ is the Hopf link. 
Let $D'$ be the disk bounded by the longitude of $\mathcal{N}(K')$. 
Consider the $n$th power of the disk twist $T_{D'}^n$ for $n \ge 0$.  
Following Remark~\ref{rem_LocalMove}, 
we take the point $D' \cap K$ as an origin of the  rotation of $D'$ for the disk twists $T_{D'}^n$. 
Then we have the diagram of 
$K' \cup T_{D'}^n(\mathrm{br}(\underline{w_6}) \setminus K') = K' \cup T_{D'}^n(\mathrm{cl}(\underline{w_6}))$ 
shown in Figure~\ref{fig_twisting}(2). 
Note that $  T_{D'}^n(\mathrm{cl}(\underline{w_6}))$ is isotopic to $\mathrm{cl}(\underline{w_6} \Delta^{2n})$, 
that is the closure of $\underline{w_6} \Delta^{2n} = 
 \underline{w_6} (\sigma_1 \sigma_2 \sigma_3 \sigma_4 )^{5n}$. 
 Thus 
 $$K' \cup T_{D'}^n(\mathrm{br}(\underline{w_6} )\setminus K') = \mathrm{br} (\underline{w_6} \Delta^{2n}),$$ 
 and $H_{D'}^n$ is a homeomorphism from $ \mathcal{E}( \mathrm{br}(\underline{w_6}))$ 
 to $\mathcal{E}(K' \cup T_{D'}^n(\mathrm{br}(\underline{w_6} )\setminus K')) \simeq 
 \mathcal{E}(\mathrm{br} (\underline{w_6} \Delta^{2n}))$. 
(See Section~\ref{subsection_DiskTwists} for  $H_{D'}^n$.) 
The closure of the last string of $\underline{w_6} \Delta^{2n} \in B_5$ 
is an unknot, say $K''$ 
which bounds a disk $D''$. 
(In the case $n=0$, we have $K= K''$, $D= D''$ and $H_{D'}^n$ equals the identity map.)
We apply the argument of Remark~\ref{rem_LocalMove}, 
and consider the disk twist $T_{D''}$ 
taking the point of $D'' \cap K'$ as an origin of the rotation of the disk $D''$ for $T_{D''}$. 
It turns out that 
$$K'' \cup T_{D''} (\mathrm{br} (\underline{w_6} \Delta^{2n}) \setminus K'') = 
\mathrm{br}(\underline{x_{4n+8}} \  \underline{(y_{4n+8})^n}) (= \mathrm{br}(\underline{w_{4n+8}})).$$ 
To see the equality, we first note that 
$\mathrm{br}(\underline{w_6} \Delta^{2n}) \setminus (K' \cup K'')$ 
  intersects with $D''$ at $2+ 4n$ points, see Figure~\ref{fig_twisting}(2). 
  We arrange, by an isotopy, $2+4n$ intersection points in a line 
 which is parallel to $K''$. 
  Then view the image $T_{D''} (\mathrm{br} (\underline{w_6} \Delta^{2n}) \setminus K'')$ 
  following the local move under the disk twist $T_{D''}$, see Figure~\ref{fig_LocalMove}(2)(3). 
  (Here $m$ in Figure~\ref{fig_LocalMove}(1) is equal to $2+4n$.) 
Figure~\ref{fig_abstractTwisting} explains this procedure in the case $n=2$.

The above equality implies that 
$H_{D''}$ is a homeomorphism 
from $ \mathcal{E}(\mathrm{br} (\underline{w_6} \Delta^{2n}))$ to 
$\mathcal{E}(K'' \cup T_{D''} (\mathrm{br} (\underline{w_6} \Delta^{2n}) \setminus K'')) \simeq  \mathcal{E}(\mathrm{br}(\underline{w_{4n+8}}))$. 
The composition of the maps 
 $H_{D''} \circ H_{D'}^n$ sends $ \mathcal{E}(\mathrm{br}(\underline{w_6})) $ 
to $\mathcal{E}(\mathrm{br}(\underline{w_{4n+8}}))$. 
Thus 
$S^3 \setminus \mathrm{br}(\underline{w_6})$ is homeomorphic to 
$S^3 \setminus \mathrm{br}(\underline{w_{4n+8}})$. 
Since ${\Bbb T}_{\underline{w_6}} = S^3 \setminus \mathrm{br}(\underline{w_6})$ is hyperbolic, 
we conclude that by Lemma~\ref{lem_bar}, 
the braid  $\underline{w_{4n+8}}$ is pseudo-Anosov. 
\end{proof}

\begin{center}
\begin{figure}
\includegraphics[width=3in]{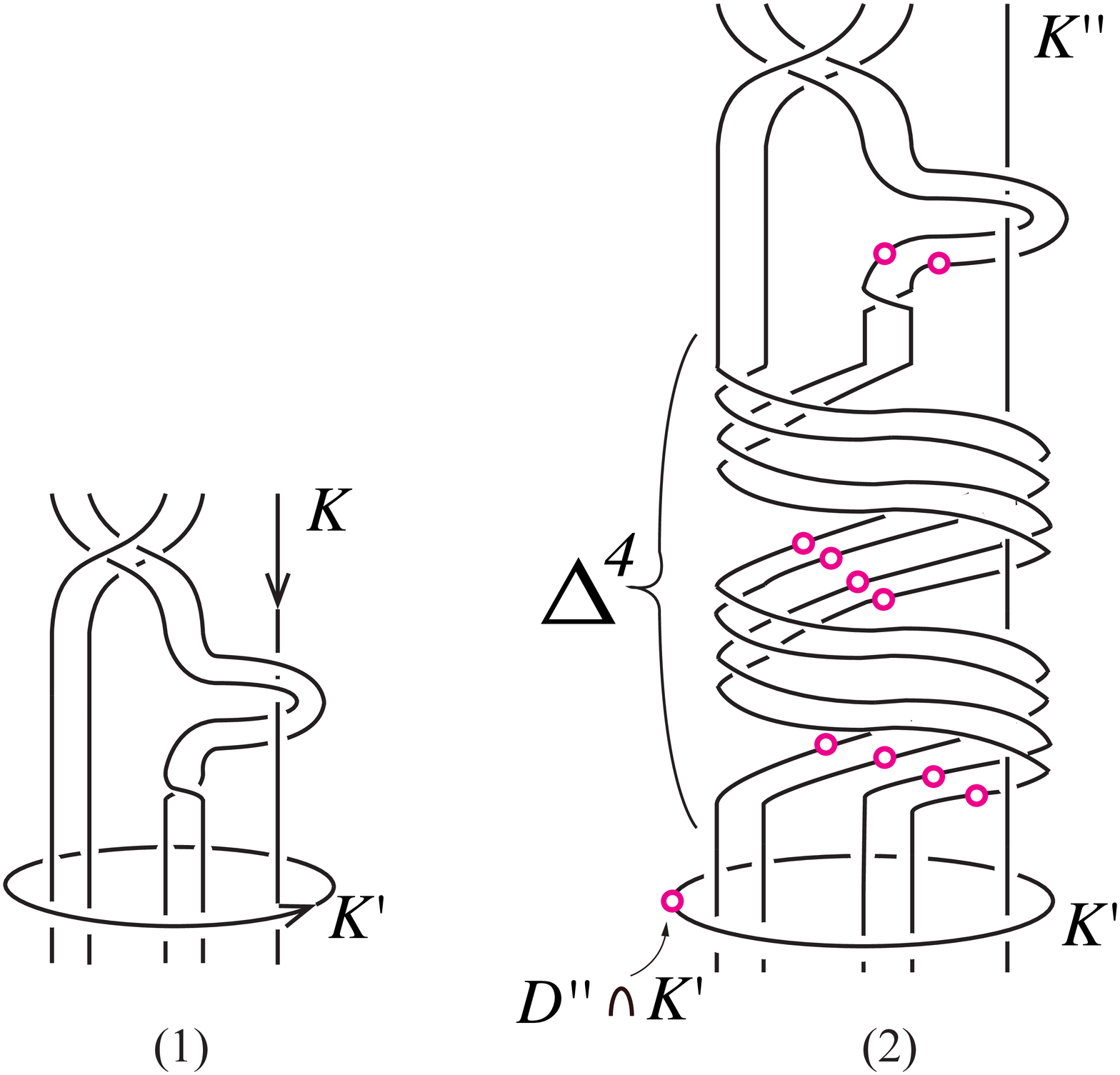}
\caption{Identifying the top and bottom of strings, we get  
(1) $\mathrm{br}(\underline{w_6})$ and 
(2) $\mathrm{br} (\underline{w_6} \Delta^{2n})$, 
where $n=2$ in this figure. 
Figure (1) indicates orientations of $K$ and $K'$.
Figure (2) explains 
 $\mathrm{br}(\underline{w_6} \Delta^{2n}) \setminus K''$ 
  intersects with $D''$ at $3+ 4n$ points (indicated by small circles). 
  Hence $\mathrm{br}(\underline{w_6} \Delta^{2n}) \setminus (K' \cup K'')$ 
  intersects with $D''$ at $2+ 4n$ points.}
\label{fig_twisting}
\end{figure}
\end{center}

\begin{center}
\begin{figure}
\includegraphics[width=4in]{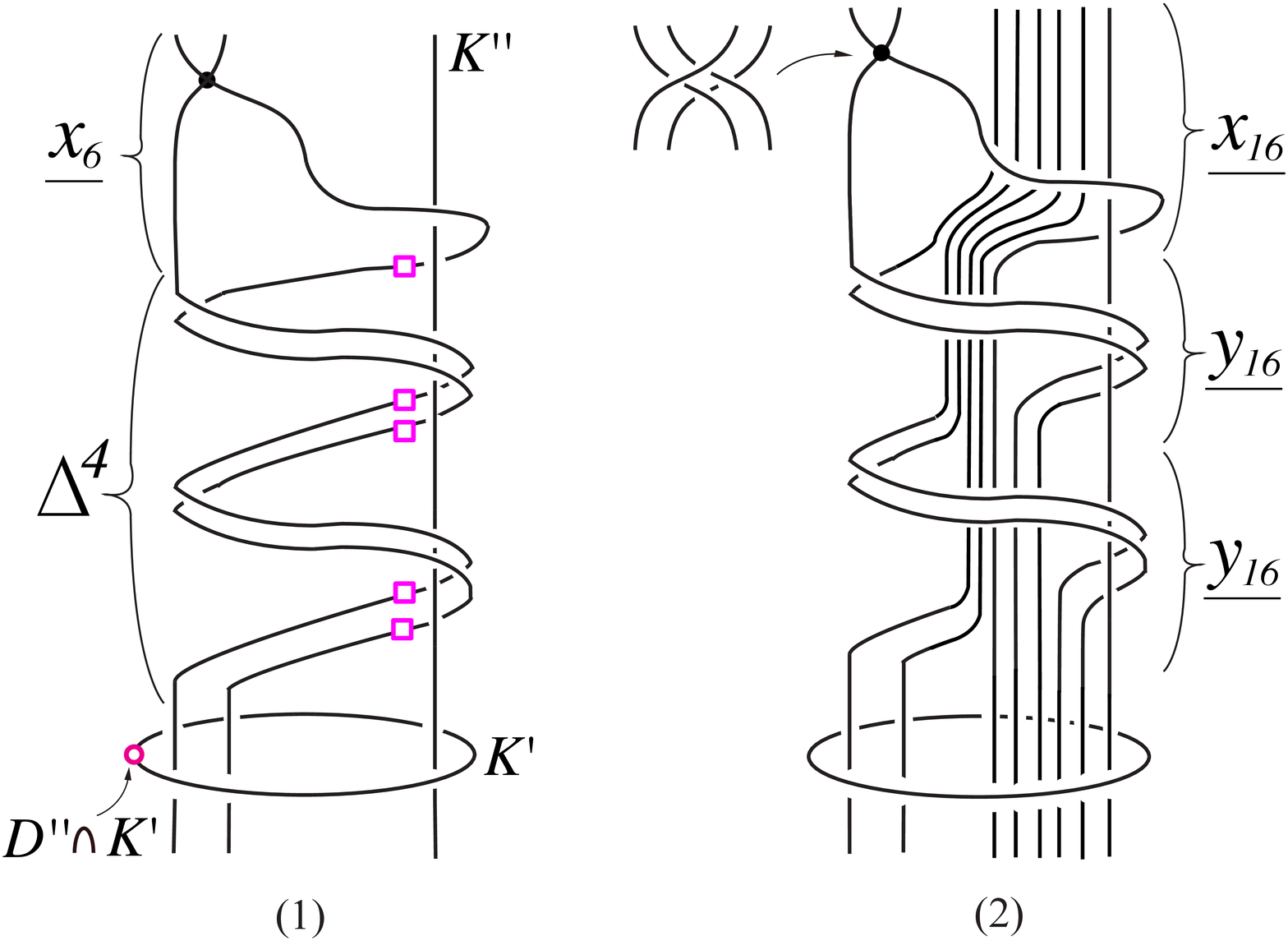}
\caption{
Illustrations of two braided links (1)  $\mathrm{br} (\underline{w_6} \Delta^{4})$ and 
(2)  
$\mathrm{br}(\underline{x_{16}} \  \underline{(y_{16})^2}) (= \mathrm{br}(\underline{w_{16}}))$. 
To get actual two braided links, 
we duplicate each of the braid strings except the last one. 
Each small rectangle $\Box$ in (1) represents some two small circles given in Figure~\ref{fig_twisting}(2). 
The `virtual'  crossings $\bullet$ in both figures (1)(2) mean  $\sigma_2^{-1}\sigma_1^{-1} \sigma_3 \sigma_2 $.} 
\label{fig_abstractTwisting}
\end{figure}
\end{center}

By Lemma~\ref{lem_B_Pro}, 
we let $a_{4n+8} \in H_2(M_0, \partial M_0; {\Bbb Z})$ be the homology class of the $F_{0, 4n+8}$-fiber 
of the fibration on $M_0$ 
whose monodromy is described by $w_{4n+8}$. 
To study properties of such a fibered class, 
we take a  $3$-punctured disk $F \simeq \varSigma_{0,4}$ 
embedded in $S^3 \setminus \mathrm{br}(\underline{w_6})$ 
so that $F$ is bounded by the unknotted component $K \subset \mathrm{br}(\underline{w_6})$, 
i.e,  
$F$ is an interior of the disk $D$ 
removed the set of $3$ points $( \mathrm{br}(\underline{w_6}) \setminus K) \cap D$. 
To choose an orientation of $F$, 
we consider an orientation of the last string of  $\underline{w_6}$ 
from the {\it top} to the {\it bottom}. 
This determines an orientation of $K$ 
(see Figure~\ref{fig_twisting}(1)), and 
we have an  orientation of $F$ induced by $K$. 
Let $\bar{F}$ be the oriented disk with $3$ holes (i.e, sphere with $4$ boundary components) embedded in 
$\mathcal{E}(\mathrm{br}(\underline{w_6}))$, 
which is obtained from $D$ removed the interiors of the $3$ disks 
whose centers are the above $3$ points. 
The fibered class $a_{4n+8}$ can be expressed by using $a_6$ and 
$[\bar{F}]  \in H_2(M_0, \partial M_0; {\Bbb Z})$  as follows.

\begin{lem}
\label{lem_FiberedClass}
We have 
$a_{4n+8}= (n+1) a_{6} + [\bar{F}] \in H_2(M_0, \partial M_0; {\Bbb Z})$ for each $n \ge 0$. 
In particular, 
the ray of $a_{4n+8}$ through the origin goes to the ray of $a_6$ 
as $n$ goes to $\infty$. 
\end{lem}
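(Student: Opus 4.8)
The plan is to work throughout in $M_0=\mathcal{E}(L_0)\simeq {\Bbb T}_{\underline{w_6}}=S^3\setminus \mathrm{br}(\underline{w_6})$ and to pin down all three classes by pairing them against $H_1(M_0)$. Since $L_0$ has three components, Lefschetz duality identifies $H_2(M_0,\partial M_0;{\Bbb Z})$ with $\mathrm{Hom}(H_1(M_0;{\Bbb Z}),{\Bbb Z})$, and $H_1(M_0;{\Bbb Z})$ is freely generated by the meridians $\mu_{K'},\mu_K,\mu_{K_1}$ of the three components of $\mathrm{br}(\underline{w_6})$, namely the braid axis $K'$, the closure $K$ of the last string, and the remaining closed-braid component $K_1$. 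Hence a class is determined by its three intersection numbers with these meridians, and it suffices to check the identity coordinatewise. First I would record the two easy classes. For a braided link the fiber runs longitudinally along the axis and meets each braid strand transversally in a meridian, so $\langle a_6,\mu_{K'}\rangle=1$ and $\langle a_6,\mu_K\rangle=\langle a_6,\mu_{K_1}\rangle=0$; that is, $a_6\leftrightarrow(1,0,0)$. Since $\bar F$ is an oriented Seifert surface for $K$ that meets $K'$ and $K_1$ only in meridional circles, the same bookkeeping gives $[\bar F]\leftrightarrow(0,1,0)$, so the right-hand side $(n+1)a_6+[\bar F]$ corresponds to $(n+1,1,0)$.

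The heart of the argument is to show $a_{4n+8}\leftrightarrow(n+1,1,0)$ as well. Here I would transport the braid-fiber class $\hat a_{4n+8}$ of $\mathrm{br}(\underline{w_{4n+8}})$ back to $M_0$ along the homeomorphism $\Psi=H_{D''}\circ H^{n}_{D'}$ produced in Lemma~\ref{lem_B_Pro}, and use naturality of the intersection pairing, $\langle a_{4n+8},\gamma\rangle_{M_0}=\langle\hat a_{4n+8},\Psi_{*}\gamma\rangle$. The remaining input is the effect of a disk twist on $H_1$: twisting along a disk $D$ bounded by an unknotted component $C$ sends a class $\gamma$ to $\gamma+(\gamma\cdot D)\,[\partial D]$, where $\partial D$ is the longitude of $C$. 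Applying this to $\mu_{K'},\mu_K,\mu_{K_1}$ through the two stages of $\Psi$ — noting that the meridian of a strand passing through a twisting disk is preserved, whereas the meridian of the component bounding the disk acquires longitudes — and then pairing against $\hat a_{4n+8}$ (which pairs to $1$ with the meridian of the new braid axis and detects each closed-braid component through its longitude), produces the three coordinates. The $n$ twists $H^{n}_{D'}$ along the axis disk $D'$ are what load the first coordinate, and $H_{D''}$ contributes the copy of $\bar F$ seen in the second.

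I expect the main obstacle to be precisely this last bookkeeping: forcing the first coordinate to be $n+1$ rather than $n$, with correct signs. This demands care in identifying which original component becomes the braid axis of $\underline{w_{4n+8}}$ and in tracking the framing (self-linking) changes induced by the Rolfsen-type twists, since those framing shifts are exactly what convert the intrinsically meridional boundary slopes of $\hat a_{4n+8}$ into longitudinal wrapping around $K'$. As an independent check that fixes the answer, I would compute Thurston norms: $\chi_-(F_{0,4n+8})=4n+6$, while $(n+1)\,\chi_-(F_{0,6})+\chi_-(\bar F)=(n+1)\cdot 4+2=4n+6$; once one knows that $a_6$ and $a_{4n+8}$ lie in a common fibered cone on which the norm is linear (which also follows from the cross-section description, Theorem~\ref{thm_Thurston} and Theorem~\ref{thm_Fried}(3)), this equality is consistent with the claimed relation and excludes the competing coefficient possibilities. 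Finally, the concluding assertion is immediate from the formula: $\tfrac{1}{n+1}a_{4n+8}=a_6+\tfrac{1}{n+1}[\bar F]\to a_6$, so the ray through $a_{4n+8}$ tends to the ray through $a_6$.
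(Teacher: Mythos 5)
Your dual framework is legitimate and genuinely different from the paper's argument: identifying $H_2(M_0,\partial M_0;{\Bbb Z})$ with $\mathrm{Hom}(H_1(M_0;{\Bbb Z}),{\Bbb Z})$, your coordinates $a_6\leftrightarrow(1,0,0)$ and $[\bar F]\leftrightarrow(0,1,0)$ are correct, and transporting the fiber class along $\Psi=H_{D''}\circ H_{D'}^n$ is the right move. The genuine gap is that the step which \emph{is} the lemma --- computing the three pairings of $a_{4n+8}$ against $\mu_{K'},\mu_K,\mu_{K_1}$ --- is announced but never performed; you yourself defer it as ``the main obstacle.'' The substitute you offer, the Thurston-norm count, cannot close this gap: it is a single scalar equation in the rank-three lattice $H_2(M_0,\partial M_0;{\Bbb Z})\cong{\Bbb Z}^3$, it presupposes that $a_{4n+8}$ already lies in the non-negative span of $a_6$ and $[\bar F]$ (which is not known at that stage), and it is blind to any contribution from the third coordinate direction, about which the shape of the norm ball is unknown. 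So ``excludes the competing coefficient possibilities'' is unjustified; as written, the proposal establishes consistency of the formula, not the formula.

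For the record, the bookkeeping can be completed, but it needs inputs nowhere in your sketch. Write $K_1$ for the $4$-string component of $\mathrm{br}(\underline{w_6})$, and $A$, $U$, $B$ for the axis, the last-string unknot, and the $(4n+6)$-string component of $\mathrm{br}(\underline{w_{4n+8}})$. First one must identify the new axis: it is $\Psi(K')$, \emph{not} $\Psi(K)=K''$. Indeed $\mathrm{lk}(K,K_1)=2$ from the braid word, so $\mathrm{lk}\bigl(K'',\Psi(K_1)\bigr)=2+4n$, whereas the axis must link $B$ exactly $4n+6$ times, and $\mathrm{lk}\bigl(\Psi(K'),\Psi(K_1)\bigr)=4+(4n+2)=4n+6$. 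Second, on $H_1$ (with orientations and the handedness of both twists fixed consistently) one has $H_{D'*}^n(\mu_{K'})=\mu_{K'}+n\lambda_{K'}=\mu_{K'}+n(\mu_{K''}+4\mu_{K_1''})$ with the other meridians fixed, while $H_{D''*}$ sends $\mu_{K'}\mapsto\mu_A$, $\mu_{K_1''}\mapsto\mu_B$, and $\mu_{K''}\mapsto\mu_U+\lambda_U=\mu_U+\mu_A+(4n+2)\mu_B$ (the framing shift). Composing, the $\mu_A$-coefficients of $\Psi_*\mu_{K'}$, $\Psi_*\mu_K$, $\Psi_*\mu_{K_1}$ are $n+1$, $1$, $0$, which is the lemma. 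Note the mechanism is not the one you describe: the $n$ is produced only by the \emph{interaction} of the two stages --- each of the $n$ copies of $\mu_{K''}$ inside $n\lambda_{K'}$ picks up one $\mu_A$ through the framing shift of $H_{D''}$ --- while the $+1$ comes from $\mu_{K'}\mapsto\mu_A$ directly. The paper avoids all of this by arguing on surfaces rather than dually: it exhibits the fiber of $a_{4n+8}$ as the oriented sum $(n+1)F_{a_6}+\bar F$, checking that $H_{D'}^n$ carries $nF_{a_6}+\bar F$ to the holed disk $F''$ and that $H_{D''}$ carries $H_{D'}^n(F_{a_6})+F''$ to the new fiber, so the homology identity is read off immediately; moreover the resulting surface-level identity (\ref{equation_fiber4n8}) is exactly what Lemma~\ref{lem_SameFace} needs afterwards, and a purely homological computation would not supply it.
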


\begin{proof}
Recall that   $F_{a_6}$ is 
a minimal representative of $a_6 $. 
In other words, $F_{a_6}$ is a $F_{0,6}$-fiber of the fibration on $M_0$ 
associated to $a_6$. 
We consider the {\it oriented sum} 
$n F_{a_6}+ \bar{F}$ which is an oriented surface 
embedded in $M_0$. 
This surface is obtained 
by the cut and paste construction of parallel $n$ copies of $F_{a_6}$ and a copy of  $\bar{F}$. 
(For the construction of the oriented sum, 
see \cite[p104]{Thurston1} or \cite[Section~5.1.1]{Calegari}.) 
We take a surface $F''$ embedded in $M_0$, 
which is a disk with $(3+4n)$ holes as follows. 
Consider the disk $D''$ bounded by the longitude of $\mathcal{N}(K'')$. 
Then remove the interiors of small $(3+4n)$ disks from $D''$ 
whose centers are the $(3+4n)$ intersection points 
$(\mathrm{br}(\underline{w_6} \Delta^{2n}) \setminus K'') \cap D''$, see Figure~\ref{fig_twisting}(2). 
We denote by $F''$, the resulting disk with $(3+4n)$ holes. 
We see that the homeomorphism 
$H_{D'}^n:  \mathcal{E}(\mathrm{br}(\underline{w_6}))  \rightarrow 
\mathcal{E}(\mathrm{br} (\underline{w_6} \Delta^{2n}))$ 
in the proof of Lemma~\ref{lem_B_Pro} 
sends  $nF_{a_6}+ \bar{F}$ to $F''$. 
Hence 
$$[F'']= [n F_{a_6}+ \bar{F}] = n a_6+ [\bar{F}] \in H_2(M_0, \partial M_0; {\Bbb Z}).$$
Obviously 
$[H_{D'}^n(F_{a_6})] = [F_{a_6}] (= a_6)$. 
We now consider the oriented sum $H_{D'}^n(F_{a_6})+ F''$. 
Then 
$H_{D''}:  \mathcal{E}(\mathrm{br} (\underline{w_6} \Delta^{2n})) \rightarrow 
\mathcal{E}(\mathrm{br}(\underline{w_{4n+8}}))$ 
in the proof of Lemma~\ref{lem_B_Pro} 
sends $H_{D'}^n(F_{a_6})+ F''$ to the $F_{0, 4n+8}$-fiber of the fibration on 
$M_0 $ associated to $a_{4n+8}$.  
Putting all things together, 
we have 
$$a_{4n+8} = [H_{D'}^n(F_{a_6})+ F''] =[H_{D'}^n(F_{a_6})]+ [F'']= a_6+ n a_6+ [\bar{F}] = (n+1) a_6 + [\bar{F}].$$ 
Thus 
$$\displaystyle\lim_{n \to \infty} \tfrac{a_{4n+8}}{n+1} 
= \displaystyle\lim_{n \to \infty} (a_6+ \tfrac{ [\bar{F}]}{n+1}) = a_6.$$
This completes the proof. 
\end{proof}

Since the normalized entropy function $\mathrm{Ent}$ is constant on each ray through the origin in the fibered cone, 
Lemmas~\ref{lem_6braid}, \ref{lem_L10n95}, \ref{lem_B_Pro} and 
\ref{lem_FiberedClass} tell us that 
\begin{equation}
\label{equation_asymp}
\lim_{n \to \infty} |\chi (F_{0, 4n+8})| \log (\lambda(w_{4n+8})) =|\chi (F_{0, 6})| \log (\lambda(w_{6})) 
= 4 \log \kappa.
\end{equation}
Since $ |\chi (F_{0, 4n+8})|= 4n+6$  goes to $\infty$ as $n $ does, 
and the right-hand side is constant, 
we conclude that 
\begin{equation}
\label{equation_0}
\displaystyle \lim_{n \to \infty} \log \lambda(w_{4n+8}) = 0.
\end{equation}

Let $\Omega$ be a fibered face of $M_0$ 
such that $a_6 \in int(C_{\Omega})$. 
Lemma~\ref{lem_FiberedClass} implies that 
$a_{4n+8} \in int(C_{\Omega})$ for $n$ large. 
We shall prove in Lemma~\ref{lem_SameFace} 
that the fibered class $a_{4n+8}$ lies in $int(C_{\Omega})$ 
for {\it each} $n \ge 0$. 
Recall that $K$ and $K'$ are the unknotted components of $\mathrm{br} (\underline{w_6})$. 
We choose an orientation of $K'$ as in Figure~\ref{fig_twisting}(1). 
For an embedded surface $\hat{S}$ in $M_0 = \mathcal{E}(\mathrm{br} (\underline{w_6}))$, 
we denote by $\partial_K(\hat{S})$ and $\partial_{K'}(\hat{S})$, 
the components of the boundary $\partial \hat{S}$ of $\hat{S}$ 
which lie on $\partial \mathcal{N}(K)$  and $\partial \mathcal{N}(K')$ respectively. 
Let $\hat{\Phi}: F_{0,6} \rightarrow F_{0,6}$ be a pseudo-Anosov homeomorphism 
whose mapping class $[\hat{\Phi}]$ is described by $w_6$. 
(Thus ${\Bbb T}_{[\hat{\Phi}]}$  is homeomorpphic to $ \mathcal{E}(\mathrm{br} (\underline{w_6})) \simeq M_0$.)

\begin{lem}
\label{lem_SameFace} 
We have 
$a_{4n+8} \in int(C_{\Omega})$ for each $n \ge 0$. 
\end{lem}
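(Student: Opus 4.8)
First I would reduce the statement to a single case by convexity. The set $int(C_{\Omega})$ is an open convex cone, and by Lemma~\ref{lem_FiberedClass} we have $a_{4n+8} = (n+1)a_6 + [\bar{F}]$, so for $n \ge 1$
\[
a_6 + \tfrac{1}{n+1}[\bar{F}] = \tfrac{n}{n+1}\,a_6 + \tfrac{1}{n+1}\bigl(a_6 + [\bar{F}]\bigr)
\]
is a convex combination of $a_6$ and $a_8 = a_6 + [\bar{F}]$. Since $a_6 \in int(C_{\Omega})$ by hypothesis, it therefore suffices to treat the base case $a_8 \in int(C_{\Omega})$: once this is known, each $a_6 + \tfrac{1}{n+1}[\bar{F}]$ lies in the convex set $int(C_{\Omega})$, and then $a_{4n+8} = (n+1)\bigl(a_6 + \tfrac{1}{n+1}[\bar{F}]\bigr) \in int(C_{\Omega})$ because the cone is invariant under positive scaling. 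Note that $a_8$ is already known to be a fibered class by Lemma~\ref{lem_B_Pro}, so the genuine content is that it lies on the \emph{same} fibered face as $a_6$.

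To place $a_8$ in $int(C_{\Omega})$ I would use Fried's criterion, Theorem~\ref{thm_Fried}(3): a class represented by a cross-section to the suspension flow $\hat{\Phi}^t$ on $M_0 \simeq {\Bbb T}_{[\hat{\Phi}]}$ necessarily lies in $int(C_{\Omega})$. By Theorem~\ref{thm_Fried}(1)(2) I first isotope the fiber $F_{a_6}$ so that it is transverse to $\hat{\Phi}^t$ with first return map $\hat{\Phi}$. Since $a_8 = a_6 + [\bar{F}]$, I would then form the oriented sum of $F_{a_6}$ and $\bar{F}$ and argue that, after a suitable isotopy, it is a cross-section to $\hat{\Phi}^t$.

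The difficulty, which is the heart of the proof, is that $\bar{F}$ is not transverse to $\hat{\Phi}^t$: its boundary component $\partial_K(\bar{F})$ is a longitude of $\mathcal{N}(K)$, and $K$ is the suspension of the puncture of $F_{a_6}$ that is fixed by the monodromy, so $\partial_K(\bar{F})$ runs (anti)parallel to the flow and $\bar{F}$ is tangent to $\hat{\Phi}^t$ along $\partial_K(\bar{F})$. I would repair this by replacing the flow-tangent collar of $\bar{F}$ near $\partial\mathcal{N}(K)$ with a flowband $[J_1, J_2]$ of $\hat{\Phi}^t$, producing a surface that is everywhere transverse to the flow; then, controlling the boundary behavior on $\partial\mathcal{N}(K)$ and $\partial\mathcal{N}(K')$ through $\partial_K$ and $\partial_{K'}$, I would check that the oriented sum crosses every flow line positively. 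This last verification---that the amended surface is genuinely a cross-section, i.e. meets every orbit, and not merely that it is transverse---is the main obstacle, and it is localized on the boundary torus $\partial\mathcal{N}(K)$ where the flowband repair is carried out.
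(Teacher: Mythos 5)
Your convexity reduction is valid and is a genuine (if small) departure from the paper: $int(C_{\Omega})$ is indeed an open convex cone, so writing $a_{4n+8} = n\,a_6 + a_8$ and using $a_6 \in int(C_{\Omega})$ does reduce everything to the single class $a_8$. The paper instead runs one argument uniformly in $n$ at no extra cost, so the reduction mainly tidies notation; after it, your plan for the base case is the paper's plan: apply Theorem~\ref{thm_Fried}(3) to the oriented sum $F_{a_6}+\bar{F}$.

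The problem is that the base case---which you correctly identify as the entire content---is not actually proved, and the repair you sketch is self-contradictory. A flowband $[J_1,J_2]$ is by definition a union of flow segments $\Phi^t(x)$, $0 \le t \le \mathfrak{g}(x)$; it is everywhere \emph{tangent} to $\hat{\Phi}^t$, so replacing the flow-tangent collar of $\bar{F}$ by a flowband cannot ``produce a surface that is everywhere transverse to the flow''---it reproduces exactly the tangency you are trying to remove. (The paper does build a surface of this type, $F = S^0 \cup J$, but later and for a different purpose: as input to the branched-surface construction, precisely because it is \emph{not} transverse.) What actually kills the tangency in the paper is the oriented sum itself: cutting and pasting $\bar{F}$ with the $(n+1)$ parallel copies of the fiber changes the boundary classes to $[\partial_K F_{a_{4n+8}}] = (n+1)m_K + \ell_K$ and $[\partial_{K'} F_{a_{4n+8}}] = (n+1)\ell_{K'} + m_{K'}$; since the flow lines run in the $\ell_K$ direction on $\partial \mathcal{N}(K)$ and in the $m_{K'}$ direction on $\partial \mathcal{N}(K')$, these boundary curves are transverse to every flow line, and the summed surface is a ``spiral staircase'' winding $(n+1)$ times in the flow direction, hence transverse to $\hat{\Phi}^t$. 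Your proposal contains no substitute for this slope computation. You also locate the ``main obstacle'' in the wrong place: meeting every orbit is the easy half, because the oriented sum contains (up to small perturbation) the copies of $F_{a_6}$, and $F_{a_6}$ already meets every flow line since it is a fiber; the genuinely nontrivial half is the transversality just described.
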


\begin{proof}
The minimal representative $F_{a_6}$  is transverse to the suspension flow $\hat{\Phi}^t$ obviously, but 
$\bar{F}$ is not, since 
both  $\partial_K \bar{F}$ and $\partial_{K'} \bar{F}$ are parallel to flow lines of $\hat{\Phi}^t$. 
(See also Figure~\ref{fig_Surface}(3) and its caption.)
We prove that  the oriented sum $(n+1) F_{a_6}+ \bar{F}$ for $n \ge 0$  is transverse to $\hat{\Phi}^t$ 
(up to isotopy) 
and it intersects every flow line. 
This means that  $(n+1) F_{a_6}+ \bar{F}$ is a cross-section to $\hat{\Phi}^t$ 
for $ \mathcal{E}(\mathrm{br} (\underline{w_6}))$. 
By Theorem~\ref{thm_Fried}(3),  
we have 
$a_{4n+8}= [(n+1)F_{a_6}+ \bar{F}] \in  int(C_{\Omega})$.

By the proof of Lemma~\ref{lem_FiberedClass}, 
$(n+1) F_{a_6}+ \bar{F} $ becomes a fiber of the fibration on $M_0$ associated to $a_{4n+8}$. 
Hence we may assume that 
\begin{equation}
\label{equation_fiber4n8}
F_{a_{4n+8}}=(n+1) F_{a_6}+ \bar{F} . 
\end{equation}
We have the meridian and longitude basis $\{ m_K, \ell_K\}$ for  $\partial \mathcal{N}(K)$ 
and $\{ m_{K'}, \ell_{K'}\}$ for $\partial \mathcal{N}(K')$.
It follows that 
\begin{eqnarray*}
{[}\partial_K F_{a_{4n+8}}{]} &=& (n+1) m_K + \ell_K \ne \pm \ell_K \in H_1(\partial \mathcal{N}(K))\  \mbox{and}
\\
{[}\partial_{K'} F_{a_{4n+8}}{]} &=& (n+1)\ell_{K'} + m_{K'} \ne \pm m_{K'} \in H_1(\partial \mathcal{N}(K')).  
\end{eqnarray*}
This implies that  both $\partial_K F_{a_{4n+8}}$ and $\partial_{K'} F_{a_{4n+8}}$
are transverse to  every flow line of $\hat{\Phi}^t$, 
since $[\partial_K \bar{F} ] = \ell_K$ and 
$[\partial_K' \bar{F} ] = m_{K'}$. 
By (\ref{equation_fiber4n8}), 
$F_{a_{4n+8}}$ is an oriented sum obtained from the $(n+1)$ copies of $F_{a_6}$ and the surface $\bar{F}$. 
Hence the shape of the embedded surface $F_{a_{4n+8}}$ in $M_0$  is of a `spiral staircase' which turns round $(n+1)$ times along  $\ell_K$. 
Therefore $F_{a_{4n+8}}$ is transverse to $\hat{\Phi}^t$. 
Moreover $F_{a_{4n+8}}$ intersects every flow line of $\hat{\Phi}^t$ 
(by construction of $(n+1) F_{a_6}+ \bar{F} (= F_{a_{4n+8}})$), 
since so does $F_{a_6}$. 
This completes the proof. 
\end{proof}

\begin{lem}
\label{lem_4n6}
If  $n \ge 1$,  then  $w_{4n+6}$ is pseudo-Anosov and the equality 
$\lambda(w_{4n+6}) = \lambda(w_{4n+8})$ holds. 
In particular ${\Bbb T}_{w_{4n+6}}$ is a hyperbolic fibered $3$-manifold 
obtained from ${\Bbb T}_{w_{4n+8}}$ by Dehn fillings about the two cusps 
along the boundary slopes of the fiber 
associated to  $a_{w_{4n+8}}$. 
\end{lem}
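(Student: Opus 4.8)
The plan is to analyze the pseudo-Anosov $\Phi_{w_{4n+8}}\colon \varSigma_{0,4n+8}\rightarrow\varSigma_{0,4n+8}$ near the two punctures $c_{4n+7}$ and $c_{4n+8}$ coming from the last two strings of $w_{4n+8}$, and to show that these punctures may be filled in without changing the dilatation. Since the last two strings of $w_{4n+8}$ form the identity of $SB_2$, both $c_{4n+7}$ and $c_{4n+8}$ are fixed by $\Phi_{w_{4n+8}}$, so in ${\Bbb T}_{w_{4n+8}}\simeq M_0$ they sweep out two distinct cusps; under the disk-twist identifications in the proof of Lemma~\ref{lem_B_Pro} these are the two unknotted components $K$ (the closure of the last string of $\underline{w_6}$) and $K'$ (the braid axis), so $c_{4n+7}\leftrightarrow K$ and $c_{4n+8}\leftrightarrow K'$. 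First I would produce, as in Lemma~\ref{lem_6braid}, the explicit invariant train track $\tau_{4n+8}$ and its train track representative $\mathfrak{p}_{4n+8}\colon\tau_{4n+8}\rightarrow\tau_{4n+8}$ for $\Gamma(w_{4n+8})$; because the embedded fiber $F_{a_{4n+8}}$ is a ``spiral staircase'' built from $(n+1)$ copies of $F_{a_6}$ and one copy of $\bar{F}$ (proof of Lemma~\ref{lem_SameFace}), the shape of $\tau_{4n+8}$ repeats periodically in $n$ and its structure near $c_{4n+7},c_{4n+8}$ can be read off uniformly in $n$.

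The heart of the argument is the claim that each of $c_{4n+7}$ and $c_{4n+8}$ lies in a once-punctured $2$-gon complementary region of $\tau_{4n+8}$, i.e. is a $2$-pronged puncture of the invariant foliations $\widehat{\mathcal{F}^s}\cap F_{a_{4n+8}}$ and $\widehat{\mathcal{F}^u}\cap F_{a_{4n+8}}$ of Theorem~\ref{thm_Fried}(2). This I would read from the shape of $\tau_{4n+8}$; as an independent check, since the prong data of $\widehat{\mathcal{F}^u}$ along each cusp torus is an invariant of the suspension flow $\hat{\Phi}^t$ on $M_0$, the prong count at $c_{4n+7},c_{4n+8}$ may be computed from the boundary slopes $[\partial_K F_{a_{4n+8}}]=(n+1)m_K+\ell_K$ and $[\partial_{K'} F_{a_{4n+8}}]=(n+1)\ell_{K'}+m_{K'}$ found in the proof of Lemma~\ref{lem_SameFace}. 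Granting the claim, I note that a fixed $k$-pronged puncture can be filled in to yield a pseudo-Anosov with the same dilatation precisely when $k\ge 2$ (if $k=1$ the filled point is a forbidden $1$-pronged interior singularity). Here $k=2$, so filling $c_{4n+7}$ and $c_{4n+8}$ extends $\widehat{\mathcal{F}^s}\cap F_{a_{4n+8}}$ and $\widehat{\mathcal{F}^u}\cap F_{a_{4n+8}}$ smoothly across the two filled points and produces a transverse pair of measured foliations on $\varSigma_{0,4n+6}$ invariant, with the same dilatation $\lambda(w_{4n+8})$, under the homeomorphism representing $\Gamma(w_{4n+6})$ obtained from $\Phi_{w_{4n+8}}$ by forgetting $c_{4n+7},c_{4n+8}$. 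Hence $w_{4n+6}$ is pseudo-Anosov and $\lambda(w_{4n+6})=\lambda(w_{4n+8})$. Equivalently, at the train track level, deleting the two punctures together with the peripheral loops around them leaves the real edges and the incidence matrix $M_{\mathfrak{p}_{4n+8}}$ unchanged, hence fixes the Perron--Frobenius eigenvalue.

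Finally, since $w_{4n+6}$ is pseudo-Anosov, Thurston's hyperbolization theorem (\cite{Thurston3}) shows that ${\Bbb T}_{w_{4n+6}}$ is a hyperbolic fibered $3$-manifold. Filling in the fixed puncture $c_{4n+7}$ (resp. $c_{4n+8}$) caps the boundary circle $\partial_K F_{a_{4n+8}}$ (resp. $\partial_{K'} F_{a_{4n+8}}$) of the fiber by a meridian disk, which along the flow amounts to Dehn filling the cusp $K$ (resp. $K'$) of ${\Bbb T}_{w_{4n+8}}$ along the slope $[\partial_K F_{a_{4n+8}}]$ (resp. $[\partial_{K'} F_{a_{4n+8}}]$); these are exactly the boundary slopes of the fiber associated to $a_{4n+8}$, and the capped surface is the $F_{0,4n+6}$-fiber with monodromy $\Gamma(w_{4n+6})$, so the filled manifold is ${\Bbb T}_{w_{4n+6}}$. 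The main obstacle is the prong count in the middle step: one must verify, uniformly in $n\ge 1$, that $c_{4n+7}$ and $c_{4n+8}$ are $2$-pronged and not $1$-pronged, since this is precisely what forces equality of the dilatations rather than a strict inequality; I expect this to follow from the periodic shape of $\tau_{4n+8}$ inherited from the disk-twist construction.
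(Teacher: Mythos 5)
Your overall strategy coincides with the paper's: construct the invariant train track $\tau_{4n+8}$ from the spiral-staircase fiber $F_{a_{4n+8}}=(n+1)F_{a_6}+\bar{F}$, read off the singularity data of the unstable foliation $\mathcal{F}_{w_{4n+8}}$ at the two punctures $c_{4n+7}$, $c_{4n+8}$, and fill those punctures in without changing the dilatation; the Dehn-filling description of ${\Bbb T}_{w_{4n+6}}$ at the end is also handled the same way (in fact you treat it more carefully than the paper, which dismisses it as clear from the definition of $w_{4n+6}$). However, the step you yourself single out as the heart of the argument --- that $c_{4n+7}$ and $c_{4n+8}$ are $2$-pronged, \emph{uniformly in} $n$ --- is both left unverified (``I expect this to follow\ldots'') and false. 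What the train track actually yields (proofs of Lemmas~\ref{lem_carry} and \ref{lem_foliation}) is that the complementary regions containing these punctures are an $(n+2)$-gon and an $(n+1)$-gon: $c_{4n+7}$ has $n+2$ prongs and $c_{4n+8}$ has $n+1$ prongs, so the prong numbers grow linearly with $n$. Your own proposed cross-check would have exposed this: the prong count at a cusp is the geometric intersection number of the fiber's boundary slope with the \emph{fixed} degeneracy slope of that cusp, and since $[\partial_K F_{a_{4n+8}}]=(n+1)m_K+\ell_K$ and $[\partial_{K'} F_{a_{4n+8}}]=(n+1)\ell_{K'}+m_{K'}$ vary linearly in $n$, their intersection numbers with any fixed slope grow linearly in $n$ and cannot equal $2$ for all $n$.

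The gap turns out to be reparable only because your filling criterion is stated correctly: a $k$-pronged puncture can be filled keeping the map pseudo-Anosov with the same dilatation precisely when $k\ge 2$, and the true prong counts $n+1$ and $n+2$ are $\ge 2$ exactly when $n\ge 1$, which is how the paper concludes Lemma~\ref{lem_4n6}. But note that your uniform-$2$-prong picture erases the very feature that makes the hypothesis $n\ge 1$ necessary: if both punctures were $2$-pronged for all $n\ge 0$, the lemma would also hold at $n=0$, whereas in fact $c_8$ is $1$-pronged (prong count $n+1=1$) and cannot be filled --- this is consistent with the paper's remark that removing the last two strings of $w_8$ does \emph{not} reproduce $w_6$. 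So, as written, the proposal fails at its central step; replacing ``exactly $2$ prongs'' by the computation of the actual $(n+1)$- and $(n+2)$-prong data from the shape of $\tau_{4n+8}$ (and noting that only ``at least $2$'' is needed) turns it into the paper's proof. A minor additional caution: your identification $c_{4n+7}\leftrightarrow K$, $c_{4n+8}\leftrightarrow K'$ should be rechecked against the disk-twist construction in Lemma~\ref{lem_B_Pro}, though nothing in the argument depends on which puncture sits on which cusp.
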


\noindent
We work on the cusped $3$-manifold 
${\Bbb T}_{w_6} = {\Bbb T}_{\underline{w_6}} \simeq S^3 \setminus L_0$ instead of $M_0$ with boundary. 
To prove Lemma~\ref{lem_4n6}, 
we shall construct an invariant train track for $\Gamma(w_{4n+8})$ 
concretely, and study types of singularities of the unstable foliation $\mathcal{F}_{w_{4n+8}}$ of 
the pseudo-Anosov homeomorphism $\Phi_{w_{4n+8}}: \varSigma_{0,4n+8} \rightarrow \varSigma_{0,4n+8}$ 
which represents $\Gamma(w_{4n+8})$. 
The same idea in \cite[Section~3]{Kin} 
for the construction of train tracks  can be used. 
We repeat a similar argument  modifying some claims of \cite{Kin} in a suitable way for the present paper. 
Hereafter we use basic properties on branched surfaces. 
See \cite{Oertel} for more details on the theory of branched surfaces.

By using the pseudo-Anosov homeomorphism 
$\Phi = \Phi_{w_6}: \varSigma_{0,6} \rightarrow \varSigma_{0,6}$, 
we build the mapping torus 
$${\Bbb T}_{w_6}=  \varSigma_{0,6} \times {\Bbb R}/_{(x,t+1)= (\Phi(x),t)}$$ 
for $x \in \varSigma_{0,6}$ and $t \in {\Bbb R}$. 
Given a subset $U \subset \varSigma_{0,6}$, 
we define $U^t \subset {\Bbb T}_{w_6}$ to be the image 
$U \times \{t\}$ under the projection 
$p: \varSigma \times {\Bbb R} \rightarrow {\Bbb T}_{w_6}$. 
We have an orientation preserving homeomorphism 
$$h: S^3 \setminus \mathrm{br} (\underline{w_6}) \to 
{\Bbb T}_{w_6}.$$ 
Recall that $F$ is an oriented  $4$-punctured sphere in $S^3 \setminus \mathrm{br}(\underline{w_6})$. 
Choose an orientation of $\varSigma_{0,6}^v = \varSigma_{0,6} \times \{v\}$ for each $v \in {\Bbb R}$ so that 
its normal direction coincides with the flow $\Phi^t$ direction. 
We shall capture the image $h(F)$ in ${\Bbb T}_{w_6}$. 
To do this, 
let $s$ be a  segment between the punctures $c_5$ and $c_6$. 
Since $\Phi$ fixes $c_5$ and $c_6$ pointwise 
(see the last two strings of $w_6$ in Figure~\ref{fig_wicket6}(1)), 
 $s \cup \Phi (s)$ bounds a $2$-gon. 
 Viewing the image $\Phi(s)$, we see that the $2$-gon 
 contains the punctures $c_1$ and $c_2$, 
see Figure~\ref{fig_Surface}.  
Such a  $2$-gon  removed $c_1$ and $c_2$ is denoted by $S \subset \varSigma_{0,6}$. 
The segment $s^0 = s \times \{0\} $ is connected to $s^1 = s \times \{1\}$, and 
 these segments make a flowband $J= [s^0, s^1]$ 
 which is illustrated in Figure~\ref{fig_Surface}(3). 
Since $s^1 = (\Phi(s))^0$ in ${\Bbb T}_{w_6}$, 
the union 
$$S^0 \cup J (= (S \times \{0\}) \cup J) \subset {\Bbb T}_{w_6}$$ 
defines a $4$-punctured sphere, 
see Figure~\ref{fig_Surface}(3). 
The set of punctures of $F$ maps to the set of punctures of $S^0 \cup J$ under $h$. 
This tells us that $h(F)= S^0 \cup J$ up to isotopy. 
For simplicity, $S^0 \cup J $  in $ {\Bbb T}_{w_6}$ is denoted by  $F$.

We choose  $0 < \epsilon < 2 \epsilon < 1$. 
We push $F  (= S^0 \cup J)\subset {\Bbb T}_{w_6}$  along the flow lines for $\epsilon$ times 
so that the resulting $4$-punctured sphere, denoted by $F^{\epsilon}$, 
satisfies 
$$F^{\epsilon} \cap \varSigma_{0,6}^{\epsilon} = S^{\epsilon} (= S \times \{\epsilon\}),$$
see Figure~\ref{fig_Bsurface}(2) for $S^{\epsilon}$. 
By using  $F^{\epsilon}$ and 
$\varSigma_{0,6}^{2 \epsilon} = \varSigma_{0,6} \times \{2 \epsilon\}$ 
which corresponds to a fiber $F_{a_6}$ of the fibration on $M_0$, 
we set 
$$\widehat{\mathcal{B}}= F^{\epsilon} \cup \varSigma_{0,6}^{2\epsilon}.$$
We get the branched surface $\mathcal{B}$ from $\widehat{\mathcal{B}}$ 
(which agrees with the orientations of $F^{\epsilon} $ and $\varSigma_{0,6}^{2 \epsilon}$) 
after we modify the flowband 
$$[s^{\epsilon}, (\Phi(s))^{\epsilon}] = [s^{\epsilon}, s^1] \cup [(\Phi(s))^0, (\Phi(s))^{\epsilon}]$$
of $F^{\epsilon}$ near the segment  
$s^{2\epsilon} =F^{\epsilon} \cap \varSigma_{0,6}^{2 \epsilon} \in  [s^{\epsilon}, s^1]$. 
(cf. For the illustration of this modification, see \cite[Figure~14]{Kin}.)
By Lemmas~\ref{lem_B_Pro} and \ref{lem_FiberedClass}, 
there exists a $\varSigma_{0, 4n+8}$-fiber of the fibration on ${\Bbb T}_{w_6}$ 
with the monodromy $\Gamma(w_{4n+8})$. 
We denote such a fiber by $\varSigma_{w_{4n+8}}$. 
By (\ref{equation_fiber4n8}), 
we have  
\begin{equation}
\label{equation_sigma}
\varSigma_{w_{4n+8}} = F+ (n+1)\varSigma_{0,6}. 
\end{equation}
By the construction of $\mathcal{B}$, we see that 
$\varSigma_{w_{4n+8}}$ is carried by $\mathcal{B}$.

Let $\widehat{\mathcal{F}} \subset {\Bbb T}_{w_6}$ be the suspension of the unstable foliation $\mathcal{F}$ for $\Phi$. 
We may assume that 
the train track $\tau \subset D_5$ in the proof of Lemma~\ref{lem_6braid} lies on $\varSigma_{0,6}$. 
Then $\tau$ is an invariant train track for $\Gamma(w_6)= [\Phi]$. 
Theorem~\ref{thm_Fried}(1)(2) and Lemma~\ref{lem_SameFace} 
imply the following. 

\begin{lem}
\label{lem_consequence} 
The pseudo-Anosov homeomorphism 
$\Phi_{w_{4n+8}}: \varSigma_{w_{4n+8}} \rightarrow \varSigma_{w_{4n+8}}$ is precisely 
the first return map$: \varSigma_{w_{4n+8}} \rightarrow \varSigma_{w_{4n+8}}$ of  $\Phi^t$. 
Moreover $\mathcal{F}_{w_{4n+8}} =  \widehat{\mathcal{F}} \cap \varSigma_{w_{4n+8}}$. 
\end{lem}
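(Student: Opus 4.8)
The plan is to read off both assertions directly from Fried's theorem (Theorem~\ref{thm_Fried}) once we know that $a_{4n+8}$ is a fibered class in $int(C_{\Omega})$ realized by $\varSigma_{w_{4n+8}}$. First I would invoke Lemma~\ref{lem_SameFace}, which guarantees $a_{4n+8} \in int(C_{\Omega})$ for every $n \ge 0$; moreover its proof already exhibits the surface $\varSigma_{w_{4n+8}} = F + (n+1)\varSigma_{0,6}$ (see (\ref{equation_sigma})) as a cross-section to the suspension flow $\Phi^t$ that meets every flow line. Since $\varSigma_{w_{4n+8}}$ is the connected fiber of the fibration on ${\Bbb T}_{w_6}$ associated to $a_{4n+8}$ (Lemmas~\ref{lem_B_Pro} and \ref{lem_FiberedClass}), it is a norm-minimizing representative of the primitive class $a_{4n+8}$.

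Next I would apply Theorem~\ref{thm_Fried}(1) to the fibered class $a = a_{4n+8}$ with minimal representative $\varSigma_{w_{4n+8}}$. Because the transversality and cross-section properties are already in hand from the proof of Lemma~\ref{lem_SameFace}, no further isotopy is needed, and the first return map of $\Phi^t$ to $\varSigma_{w_{4n+8}}$ is exactly the pseudo-Anosov monodromy of the fibration associated to $a_{4n+8}$, which by construction represents the mapping class $\Gamma(w_{4n+8})$. The one point that needs care is the identification of this abstractly produced monodromy with the homeomorphism $\Phi_{w_{4n+8}}$ fixed earlier: both are pseudo-Anosov representatives of $\Gamma(w_{4n+8})$, and since the pseudo-Anosov representative of a pseudo-Anosov mapping class is unique up to conjugacy by a homeomorphism isotopic to the identity, we may take them to agree. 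This yields the first assertion.

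Finally, the \emph{moreover} part follows from Theorem~\ref{thm_Fried}(2): the unstable foliation of the monodromy $\Phi_{w_{4n+8}}$ is cut out on the fiber by the suspended unstable foliation, namely $\widehat{\mathcal{F}} \cap \varSigma_{w_{4n+8}}$, so $\mathcal{F}_{w_{4n+8}} = \widehat{\mathcal{F}} \cap \varSigma_{w_{4n+8}}$. The main (and essentially the only) obstacle is the bookkeeping identification of the abstractly produced monodromy and its invariant foliation with the concretely chosen pair $(\Phi_{w_{4n+8}}, \mathcal{F}_{w_{4n+8}})$; once uniqueness of pseudo-Anosov representatives is invoked this is routine, and all the genuine geometric input, the transversality of $\varSigma_{w_{4n+8}}$ to $\Phi^t$, has already been supplied by Lemma~\ref{lem_SameFace}.
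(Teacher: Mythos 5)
Your proposal is correct and takes essentially the same route as the paper: the paper gives no separate proof of this lemma, stating only that it is implied by Theorem~\ref{thm_Fried}(1)(2) together with Lemma~\ref{lem_SameFace}, which is exactly the argument you spell out (transversality and the cross-section property from the proof of Lemma~\ref{lem_SameFace}, the first return map from Theorem~\ref{thm_Fried}(1), and the foliation identity from Theorem~\ref{thm_Fried}(2)). Your additional step of identifying the abstractly produced monodromy with $\Phi_{w_{4n+8}}$ via uniqueness of pseudo-Anosov representatives is a legitimate filling-in of the implication the paper leaves implicit.
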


\noindent
We turn to the construction of the branched surface $\mathcal{B}_{\Omega}$ 
which carries $\widehat{\mathcal{F}}$. 
First of all, we note that $\tau$ is obtained from $\Phi(\tau)$ 
by {\it folding} edges (or {\it zipping} edges), see Figure~\ref{fig_zip}. 
We choose  a family of train tracks $\{\tau_t\}_{0 \le t \le 1}$ on $\varSigma_{0,6}$ as follows. 
\begin{enumerate}
\item[(1)] 
$\tau_0 = \Phi(\tau)$. 

\item[(2)] 
$\tau_t$ at $t= \epsilon$ is a train track illustrated in Figure~\ref{fig_zip}(middle in the left column). 

\item[(3)] 
 $\tau_t = \tau$ for $2 \epsilon \le t \le 1$. 

\item[(4)] 
If $0 \le s < t \le 2 \epsilon$, then 
$\tau_t= \tau_s$ or 
$\tau_t$ is obtained from $\tau_s$ by folding edges between a cusp of $\tau_s$. 
\end{enumerate}
We let 
$$\mathcal{B}_{\Omega} = \bigcup_{0 \le t \le 1} \tau_t \times \{t\} \subset {\Bbb T}_{w_6}.$$
Since $\tau_1 \times \{1\}=  \tau_0 \times \{0\}$ in ${\Bbb T}_{w_6}$ 
(see the above conditions (1)(3)),  
it follows that 
$\mathcal{B}_{\Omega}$ is a branched surface. 
Since the invariant train track $\tau$ carries the unstable foliation $\mathcal{F}$, 
we see that $\mathcal{B}_{\Omega}$ carries $\widehat{\mathcal{F}}$. 
It is not hard to see that $\mathcal{B}_{\Omega}$ is transverse to the previous branched surface $\mathcal{B}$ 
(up to isotopy). 
Let 
\begin{equation}
\label{equation_tau}
\tau_{4n+8}= \varSigma_{w_{4n+8}} \cap \mathcal{B}_{\Omega},
\end{equation}
which is a branched $1$-manifold, 
see Figure~\ref{fig_TrainImage}(1). 
Since $\varSigma_{w_{4n+8}}$ is carried by $\mathcal{B}$, 
we may put $(n+1)$ copies of $\varSigma_{0,6}$ 
which is a part of $\varSigma_{w_{4n+8}}$ 
(see (\ref{equation_sigma})) 
into $\varSigma_{0,6} \times (2 \epsilon, 1)$. 
We may also assume that a copy of $F$ which is another part of  $\varSigma_{w_{4n+8}}$ 
satisfies that 
$S^{\epsilon} = F \cap \varSigma_{0,6}^{\epsilon}$. 
Then intersections  $\varSigma_{0,6}^{2 \epsilon} \cap \mathcal{B}_{\Omega}$ and 
$S^{\epsilon} \cap \mathcal{B}_{\Omega}$ 
(see Figure~\ref{fig_Bsurface}) 
together with $s^{2\epsilon} = F^{\epsilon} \cap \varSigma_{0,6}^{2 \epsilon}$ 
determine $\tau_{4n+8}$. 
More concretely, 
$\tau_{4n+8}$ is constructed from a copy of $S^{\epsilon} \cap \mathcal{B}_{\Omega}$ 
and $(n+1)$ copies of $\varSigma_{0,6}^{2 \epsilon} \cap \mathcal{B}_{\Omega}$, 
see (\ref{equation_sigma}), (\ref{equation_tau}). 
We label $q_1$, $q_2$, $q_3$, $p_1^{(j)}, p_2^{(j)}, \cdots, p_6^{(j)}$ ($1 \le j \le n+1$) 
for non-loop edges of $\tau_{4n+8}$. 
Notice that edges of $q_1$, $q_2$, $q_3$ come from the edges of $S^{\epsilon} \cap \mathcal{B}_{\Omega}$ 
and the rest of non-loop edges come from the edges of $\varSigma_{0,6}^{2 \epsilon} \cap \mathcal{B}_{\Omega}$. 
The $n+1$ edges $p_i^{(1)}, \cdots, p_i^{(n+1)}$ for each $1 \le i \le 6$ originate in the edges of 
$\varSigma_{0,6}^{2 \epsilon} \cap \mathcal{B}_{\Omega}$. 
If we fix $i$, then 
the number of the labeling $(j)$ in $p_i^{(j)}$ increases along the flow direction. 
We call $p_1^{(n+1)}, \cdots, p_6^{(n+1)}$ the top edges, 
$q_1, q_2, q_3$ the bottom edges, 
and $p_1^{(1)}, \cdots, p_6^{(1)}$ the second bottom edges etc. 
See Figure~\ref{fig_TrainImage}(1).

\begin{lem}
\label{lem_carry} 
The branched $1$-manifold $\tau_{4n+8}$ is a train track, and 
the unstable foliation $\mathcal{F}_{w_{4n+8}}$ of $\Phi_{w_{4n+8}}$ 
is carried by $\tau_{4n+8}$. 
\end{lem}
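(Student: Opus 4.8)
The plan is to read off $\tau_{4n+8}$ as a \emph{slice} of the carrying branched surface $\mathcal{B}_{\Omega}$ by the transverse fiber, so that both assertions descend from the corresponding facts one dimension up. The text already records that $\mathcal{B}_{\Omega}$ is transverse to $\mathcal{B}$ (up to isotopy) and that $\varSigma_{w_{4n+8}}$ is carried by $\mathcal{B}$; since carrying means $\varSigma_{w_{4n+8}}$ sits inside a fibered neighborhood of $\mathcal{B}$ transverse to its ties, I would first conclude from these two facts that $\varSigma_{w_{4n+8}}$ is transverse to $\mathcal{B}_{\Omega}$. Hence $\tau_{4n+8}=\varSigma_{w_{4n+8}}\cap\mathcal{B}_{\Omega}$ (equation~(\ref{equation_tau})) is a branched $1$-manifold, and at each point of the branch locus of $\mathcal{B}_{\Omega}$ the tangent plane along which the sheets merge cuts the transverse fiber in a well-defined tangent line with the cusp on the prescribed side. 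This gives the switch (smoothness) condition of a train track at every branch point, inherited directly from the branching data of $\mathcal{B}_{\Omega}$.

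To finish the claim that $\tau_{4n+8}$ is a genuine train track I would analyze its complementary regions in $\varSigma_{w_{4n+8}}\cong\varSigma_{0,4n+8}$. Using the concrete description already set up, I would build $\tau_{4n+8}$ from one copy of $S^{\epsilon}\cap\mathcal{B}_{\Omega}$ (contributing the bottom edges $q_1,q_2,q_3$) glued along the flowband edge $s^{2\epsilon}=F^{\epsilon}\cap\varSigma_{0,6}^{2\epsilon}$ to the $(n+1)$ stacked copies of $\varSigma_{0,6}^{2\epsilon}\cap\mathcal{B}_{\Omega}$ (contributing the edges $p_i^{(j)}$), following the labeling of Figure~\ref{fig_TrainImage}(1) and equations~(\ref{equation_sigma}),~(\ref{equation_tau}). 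Each complementary region of $\mathcal{B}_{\Omega}$ is a product region, so its intersection with the fiber is polygonal; tracking these across the gluing, I would check that every complementary component of $\tau_{4n+8}$ is a once-punctured monogon, a trigon, or the boundary region, and in particular that no smooth nullgon, monogon, bigon, or smooth annulus occurs, uniformly in $n$. This follows the model of \cite[Section~3]{Kin}, adapted to the present edges.

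For the carrying statement I would pass to fibered neighborhoods. Since $\mathcal{B}_{\Omega}$ carries the suspension $\widehat{\mathcal{F}}$, there is an interval-fibered neighborhood $N(\mathcal{B}_{\Omega})$ containing $\widehat{\mathcal{F}}$ transverse to its $I$-fibers. Intersecting with the transverse fiber $\varSigma_{w_{4n+8}}$ yields $N(\tau_{4n+8})=N(\mathcal{B}_{\Omega})\cap\varSigma_{w_{4n+8}}$, a fibered neighborhood of $\tau_{4n+8}$ whose ties are the slices of the $I$-fibers of $N(\mathcal{B}_{\Omega})$. By Lemma~\ref{lem_consequence} the unstable foliation is $\mathcal{F}_{w_{4n+8}}=\widehat{\mathcal{F}}\cap\varSigma_{w_{4n+8}}$, so its leaves lie in $N(\mathcal{B}_{\Omega})\cap\varSigma_{w_{4n+8}}=N(\tau_{4n+8})$ transverse to the ties, which is exactly the assertion that $\mathcal{F}_{w_{4n+8}}$ is carried by $\tau_{4n+8}$. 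The main obstacle is the complementary-region bookkeeping in the second step: confirming the global combinatorial shape of $\tau_{4n+8}$ and excluding illegal regions uniformly in $n$, where the interplay between the single $S^{\epsilon}$-slice and the $(n+1)$ fiber-slices glued along $s^{2\epsilon}$ is most delicate, and where Figure~\ref{fig_TrainImage}(1) carries the weight of the argument.
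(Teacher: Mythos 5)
Your proposal follows essentially the same route as the paper's proof: the carrying statement is obtained by slicing the carrying of $\widehat{\mathcal{F}}$ by $\mathcal{B}_{\Omega}$ with the fiber $\varSigma_{w_{4n+8}}$ using Lemma~\ref{lem_consequence} (the paper words this step via McMullen's result that the suspension of $\mathcal{F}_{w_{4n+8}}$ by $\Phi_{w_{4n+8}}$ is isotopic to $\widehat{\mathcal{F}}$, which your direct fibered-neighborhood slicing bypasses), and the train-track property is established, exactly as in the paper, by enumerating the complementary regions of $\tau_{4n+8}$ in $\varSigma_{w_{4n+8}}$ and excluding illegal ones. One correction to your predicted bookkeeping: the complementary regions are not only once-punctured monogons, trigons and the outer region --- the regions containing the last two punctures $c_{4n+7}$ and $c_{4n+8}$ are a punctured $(n+2)$-gon and a punctured $(n+1)$-gon respectively --- but since none of these is an unpunctured bigon, monogon or nullgon your exclusion argument still goes through, and these larger polygons are precisely what the paper exploits afterwards in Lemma~\ref{lem_foliation} to count prongs.
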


\begin{proof}
Since $a_{4n+8}$ lies in the same fibered cone as $a_6$ (Lemma~\ref{lem_SameFace}), 
$\mathcal{F}_{w_{4n+8}}$ is given by $\widehat{\mathcal{F}} \cap \varSigma_{w_{4n+8}}$  
(Lemma~\ref{lem_consequence}) 
and the suspension  $\widehat{\mathcal{F}}_{w_{4n+8}}$ 
of $\mathcal{F}_{w_{4n+8}} $
by $\Phi_{w_{4n+8}}$ is isotopic to $\widehat{\mathcal{F}}$, see \cite[Corollary~3.2]{McMullen2}. 
Since $\widehat{\mathcal{F}}$ is carried by $\mathcal{B}_{\Omega}$, so is $\widehat{\mathcal{F}}_{w_{4n+8}}$. 
Thus $\mathcal{F}_{w_{4n+8}}$ is carried by 
$ \varSigma_{w_{4n+8}} \cap \mathcal{B}_{\Omega} (= \tau_{4n+8})$. 

Observe that 
each component of $\varSigma_{w_{4n+8}} \setminus \tau_{4n+8}$ is either a $1$-gon with one of the punctures 
$c_1, \cdots, c_{4n+6}$, 
an $(n+2)$-gon with  the puncture $c_{4n+7}$, 
an $(n+1)$-gon with the puncture $c_{4n+8}$ or a $3$-gon without punctures. 
(``Vertical" $(n+2)$ edges of $\tau$ in Figure~\ref{fig_TrainImage}(left) bound an $(n+2)$-gon containing $c_{4n+7}$.)
Since no bigon component is  contained in $\varSigma_{w_{4n+8}} \setminus \tau_{4n+8}$, 
we conclude that 
$\tau_{4n+8}$ is a train track  which carries $\mathcal{F}_{w_{4n+8}}$. 
\end{proof}

Since $\Phi_{w_{4n+8}}: \varSigma_{w_{4n+8}} \rightarrow  \varSigma_{w_{4n+8}} $ 
is the first return map  for $\Phi^t$, 
conditions $(1)$--$(4)$ in the family $\{\tau_t\}_{0 \le t \le 1}$ 
ensure  that the image of $\tau_{4n+8}$ under the first return map $\Phi_{w_{4n+8}}$ 
is carried by $\tau_{4n+8}$, that is 
$\tau_{4n+8}$ is invariant under $\Gamma(w_{4n+8}) = [\Phi_{w_{4n+8}}]$. 
Figure~\ref{fig_TrainImage}(2) shows the image of edges of $\tau_{4n+8}$ 
under $\Phi_{w_{4n+8}}$. 
The top edges $p_1^{(n+1)}, \cdots, p_6^{(n+1)}$ map to the edge paths of the bottom and second bottom edges 
under the first return map. 
This is because these edges $p_i^{(n+1)}$'s arrive at $p_i^{(n+1)} \times \{1\} \subset \varSigma_{0,6}^1$ first 
along the flow lines. 
The identity $p_i^{(n+1)} \times \{1\} = \Phi_{w_6}(p_i^{(n+1)}) \times \{0\}$ holds 
in ${\Bbb T}_{w_6}$. 
We get the image of $p_i^{(n+1)}$ under the first return map 
when we push $ \Phi_{w_6}(p_i^{(n+1)}) \times \{0\}$ along the flow $\Phi^t$ until it hits the fiber $\varSigma_{w_{4n+8}}$. 
The rest of non-loop edges except $q_3$ map to the {\it above edge} in ${\Bbb T}_{w_6}$ 
 along the suspension flow 
 (cf. Figure~\ref{fig_train_suspend}(1)(2)). 
For example, $p_1^{(n)}$ maps to $p_1^{(n+1)}$, 
and $q_1$ maps to $p_1^{(1)}$. 
The edge $q_3$ maps to $p_3^{(1)}$ and $p_4^{(1)}$.

Let $\mathfrak{p}_{4n+8}: \tau_{4n+8} \rightarrow \tau_{4n+8}$ be the train track representative under 
$[\Phi_{w_{4n+8}}]$. 
One can check that all non-loop edges of $\tau_{4n+8}$ are real edges for $\mathfrak{p}_{4n+8}$. 
The incident matrix $M_{\mathfrak{p}_{4n+8}}$ with respect to real edges 
must be Perron-Frobenius, 
since $\tau_{4n+8}$ carries the unstable foliation of the pseudo-Anosov homeomorphism $\Phi_{w_{4n+8}}$. 
Thus the largest eigenvalue of $M_{\mathfrak{p}_{4n+8}}$ gives us  $\lambda(w_{4n+8})$. 

\begin{lem}
\label{lem_charaPoly}
For each $n \ge 0$,  $\lambda(w_{4n+8})$ equals the largest root of the polynomial 
$$ t^{6n+9} - 2 t^{5n+8} -2t^{5n+7} + 3t^{4n+6}+ 3t^{2n+3} -2t^{n+2} - 2t^{n+1}+1.$$
\end{lem}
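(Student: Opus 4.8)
The plan is to show that the displayed polynomial is, up to sign and a power of $t$, the characteristic polynomial of the incidence matrix $M_{\mathfrak{p}_{4n+8}}$ of the train track representative $\mathfrak{p}_{4n+8}\colon\tau_{4n+8}\to\tau_{4n+8}$ constructed above. Since $\tau_{4n+8}$ carries the unstable foliation of the pseudo-Anosov $\Phi_{w_{4n+8}}$ and all of its $6n+9$ non-loop edges are real, the matrix $M_{\mathfrak{p}_{4n+8}}$ is Perron--Frobenius, so $\lambda(w_{4n+8})$ is its spectral radius, hence the largest root of its characteristic polynomial. As the polynomial in the statement has degree $6n+9$, matching the number of real edges, it then suffices to compute that characteristic polynomial and match coefficients.

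First I would transcribe the entries of $M_{\mathfrak{p}_{4n+8}}$ from Figure~\ref{fig_TrainImage}(2). The action of $\mathfrak{p}_{4n+8}$ decomposes into three parts: the interior shift $p_i^{(j)}\mapsto p_i^{(j+1)}$ for $1\le j\le n$ and $1\le i\le 6$; the bottom maps $q_1\mapsto p_1^{(1)}$, $q_2\mapsto p_2^{(1)}$ and $q_3\mapsto p_3^{(1)}+p_4^{(1)}$; and the images of the six top edges $p_1^{(n+1)},\dots,p_6^{(n+1)}$, which are edge paths in the bottom and second-bottom edges and encode the wrap-around of the first return map through the fiber $\varSigma_{0,6}$, and hence are governed by the six-by-six matrix $M_{\mathfrak{p}}$ of Lemma~\ref{lem_6braid}.

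The key simplification is that this layered structure lets me eliminate all intermediate data in closed form. Writing $R$ for the six-by-six matrix recording the top edges' images in the second-bottom edges $p_i^{(1)}$ and $N$ for the matrix recording their images in the bottom edges $q_1,q_2,q_3$, a block Schur-complement computation --- first across the bottom block $tI_3$, then across the block bidiagonal shift layers (diagonal $tI_6$, subdiagonal $-I_6$, with the wrap-around block in the corner) --- collapses $\det(tI-M_{\mathfrak{p}_{4n+8}})$ to
\[
\det\bigl(tI - M_{\mathfrak{p}_{4n+8}}\bigr)\;=\;\pm\,t^{-3}\,\det\bigl(t^{\,n+2}I_6 - tR - \widehat{N}\bigr),
\]
where $\widehat{N}=BN$ incorporates the bottom maps $B$ (so that the rows of $\widehat{N}$ attached to $p_3^{(1)}$ and $p_4^{(1)}$ coincide and those attached to $p_5^{(1)},p_6^{(1)}$ vanish). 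Crucially, $n$ now enters only through the single diagonal power $t^{\,n+2}$. Expanding this six-by-six determinant and clearing the spurious powers of $t$ introduced by the elimination should then produce exactly the eight stated monomials, with the exponents $6n+9,\,5n+8,\,5n+7,\,4n+6,\,2n+3,\,n+2,\,n+1,\,0$ emerging from the interaction of $t^{\,n+2}$ with the entries built from $M_{\mathfrak{p}}$.

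I expect the determinant bookkeeping to be the main obstacle: one must transcribe $R$ and $N$ from the figure without error --- in particular the splitting $q_3\mapsto p_3^{(1)}+p_4^{(1)}$ and the precise way the top edges thread through the piece $S$ --- and then verify that the six-by-six expansion genuinely collapses to eight terms with the correct signs and no surviving extraneous factors. Writing $P_n(t)$ for the displayed polynomial, I would cross-check the outcome in three ways: $P_n$ should be reciprocal, $t^{6n+9}P_n(1/t)=P_n(t)$, reflecting the symmetry of the Alexander/Teichm\"uller polynomial of $M_0$; at $n=0$ it should agree with the directly computed incidence matrix on the nine edges of $\tau_8$; and, since $\mathrm{Ent}$ is constant on rays and $a_{4n+8}=(n+1)a_6+[\bar F]$ by Lemma~\ref{lem_FiberedClass}, its largest root must satisfy $(4n+6)\log\lambda(w_{4n+8})\to 4\log\kappa$ as in~(\ref{equation_asymp}). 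Conceptually, these polynomials are specializations of the Teichm\"uller polynomial of the fibered face $\Omega$ along the ray of $a_{4n+8}$, which explains a priori why $n$ occurs only linearly in the exponents.
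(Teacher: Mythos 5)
Your proposal takes essentially the same route as the paper: the paper's proof of Lemma~\ref{lem_charaPoly} is precisely ``compute the characteristic polynomial of $M_{\mathfrak{p}_{4n+8}}$'' (with the clique-polynomial and Teichm\"{u}ller-polynomial methods mentioned only as alternatives), and your block Schur-complement reduction to $\pm t^{-3}\det\bigl(t^{n+2}I_6 - tR - \widehat{N}\bigr)$ is a correct and more explicit organization of that computation, consistent with the layered structure of $\tau_{4n+8}$ (the $6n+9$ real edges, the shift $p_i^{(j)}\mapsto p_i^{(j+1)}$, the bottom maps, and the wrap-around of the top edges). Your sanity checks (reciprocity of $P_n$, the $n=0$ case, and the asymptotics forced by Lemma~\ref{lem_FiberedClass}) are sound and in fact verify features the paper leaves implicit.
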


\noindent
The proof of Lemma~\ref{lem_charaPoly} can be done by the computation of 
 the characteristic polynomial of $M_{\mathfrak{p}_{4n+8}}$. 
Alternatively one can compute $\lambda(w_{4n+8})$ 
from the {\it clique polynomial} of the {\it curve complex} $G_{4n+8}$ 
associated to the directed graph $\Gamma_{4n+8}$ 
for  $\mathfrak{p}_{4n+8}: \tau_{4n+8} \rightarrow \tau_{4n+8}$. 
In general, the curve complex $G$ associated to a directed graph $\Gamma$ 
is an undirected graph together with the weight on the set of vertices $V(G)$ of $G$. 
A consequence of results of McMullen in \cite{McMullen} tells us that 
$\tfrac{1}{\lambda(w_{4n+8})}$ equals the smallest positive root of the clique polynomial of $G_{4n+8}$. 
In this case, the topological types of the undirected graph $G_{4n+8}$ (ignoring its weight on the set of vertices) 
do not depend on $n$. 
This makes the computation of the clique polynomial of $G_{4n+8}$  straightforward. 
One can also prove  Lemma~\ref{lem_charaPoly} from the computation of the Teichm\"{u}ler polynomial 
associated to the fibered face $\Omega$ by using the invariant train track for $\Gamma(\underline{w_6})$. 
For Teichm\"{u}ler polynomials, 
see \cite{McMullen2}.  

Lemmas~\ref{lem_6braid}, \ref{lem_4n6}, \ref{lem_charaPoly} allow us to compute $\lambda(w_{2k})$ for $k \ge 3$. 
See Table~\ref{table_SH}.

\begin{center}
\begin{table}[htbp]
\caption{Computation of $\lambda(w_{2k})$ for small $k$.} 
\label{table_SH}
\begin{tabular}{|c|c|}
\hline
$$ & $\lambda(w_6) \approx  2.89005 $  \\ \hline
$$ & $\lambda(w_8) \approx 2.26844$  \\ \hline
\hline
$n \ge 1$ & $\lambda(w_{4n+8})= \lambda(w_{4n+6})$   \\ \hline
$1$ & $\approx1.56362$  \\ \hline
$2$ & $\approx1.36516$  \\ \hline
$3$ & $\approx1.27074$  \\ \hline
$4$ & $\approx 1.21532$  \\ \hline
$5$ & $ \approx 1.17882$  \\ \hline
$6$ & $\approx1.15293$  \\ \hline
$7$ & $\approx 1.13361$  \\ \hline
$8$ & $\approx 1.11863$ \\ \hline
$9$ & $\approx 1.10668$ \\ \hline
$10$ & $\approx 1.09692$ \\ \hline
$11$ & $\approx  1.08879$ \\ \hline
$12$ & $\approx  1.08193$ \\ \hline
$13$ & $ \approx 1.07605$ \\ \hline
$14$ & $ \approx 1.07096 $ \\ \hline
$15$ & $ \approx  1.06651$ \\ \hline
\end{tabular}
\end{table}
\end{center}

Remember that types of singularities of $\mathcal{F}_{w_{2n+8}}$ can be read 
from the shapes of the components of $ \varSigma_{w_{4n+8}} \setminus \tau_{4n+8}$. 
From the proof of Lemma~\ref{lem_carry}, 
we have the following.


\begin{lem}
\label{lem_foliation}
The unstable foliation $\mathcal{F}_{w_{4n+8}}$ of $\Phi_{w_{4n+8}}$ 
has properties such that 
the last puncture $c_{4n+8}$  has $(n+1)$ prongs and 
the second last puncture $c_{4n+7}$ has  $(n+2)$ prongs. 
\end{lem}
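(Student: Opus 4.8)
The plan is to deduce the prong counts at the two distinguished punctures directly from the combinatorial type of the complementary regions of the invariant train track $\tau_{4n+8}$, exactly as was done for $\Phi_{w_6}$ immediately after Lemma~\ref{lem_6braid}. First I would recall the standard dictionary from \cite[Section~3.4]{BH} between an invariant train track and the measured foliation it carries: if $\mathcal{F}_{w_{4n+8}}$ is carried by $\tau_{4n+8}$ on $\varSigma_{w_{4n+8}}$, then a complementary region that is a once-punctured $k$-gon produces a $k$-prong singularity of $\mathcal{F}_{w_{4n+8}}$ at the enclosed puncture (where $k$ is the number of cusps of the region), while a non-punctured $k$-gon produces an interior $k$-prong singularity. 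This is precisely the tool that converts the ``shape'' of $\varSigma_{w_{4n+8}}\setminus\tau_{4n+8}$ into the desired prong data, and it is the same principle already invoked to conclude that each puncture of $\varSigma_{0,6}$ is $1$-pronged.

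Second, I would simply import the classification of the complementary regions of $\tau_{4n+8}$ that was established inside the proof of Lemma~\ref{lem_carry}. There it is shown that every component of $\varSigma_{w_{4n+8}}\setminus\tau_{4n+8}$ is one of four types: a $1$-gon containing one of $c_1,\dots,c_{4n+6}$, an $(n+2)$-gon containing $c_{4n+7}$, an $(n+1)$-gon containing $c_{4n+8}$, or a non-punctured $3$-gon. Since Lemma~\ref{lem_carry} already guarantees that $\tau_{4n+8}$ is a genuine train track carrying $\mathcal{F}_{w_{4n+8}}$, no additional work on the train track itself is required here.

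Applying the dictionary to the two exceptional regions then completes the argument: the once-punctured $(n+2)$-gon around $c_{4n+7}$ forces $c_{4n+7}$ to be an $(n+2)$-prong puncture, and the once-punctured $(n+1)$-gon around $c_{4n+8}$ forces $c_{4n+8}$ to be an $(n+1)$-prong puncture, which is exactly the assertion of the lemma. If desired, I would note in passing that the remaining punctures $c_1,\dots,c_{4n+6}$ are $1$-pronged, so that the whole prong distribution is consistent with the Euler--Poincar\'{e} formula on the underlying sphere.

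The only genuinely delicate ingredient --- which I regard as the main obstacle --- lies upstream, in obtaining the region classification itself from the explicit picture of $\tau_{4n+8}$ in Figure~\ref{fig_TrainImage}(1). The point is to see that the $(n+1)$ stacked copies of $\varSigma_{0,6}^{2\epsilon}\cap\mathcal{B}_{\Omega}$, together with the contribution of $S^{\epsilon}\cap\mathcal{B}_{\Omega}$, assemble their ``vertical'' edges into a single region with exactly $n+2$ cusps around $c_{4n+7}$ and a single region with exactly $n+1$ cusps around $c_{4n+8}$; once these cusp counts are trusted, Lemma~\ref{lem_foliation} is immediate. Since that counting has already been carried out in the proof of Lemma~\ref{lem_carry}, the proof of Lemma~\ref{lem_foliation} reduces to invoking it and reading off the prongs.
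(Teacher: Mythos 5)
Your proposal is correct and follows exactly the paper's own route: the paper derives Lemma~\ref{lem_foliation} by reading the prong data from the complementary regions of $\tau_{4n+8}$ via the correspondence in \cite[Section~3.4]{BH}, importing the region classification (once-punctured $1$-gons, the $(n+2)$-gon at $c_{4n+7}$, the $(n+1)$-gon at $c_{4n+8}$, and non-punctured $3$-gons) already established in the proof of Lemma~\ref{lem_carry}. Nothing essential differs, including your observation that the real content lies upstream in that classification rather than in the lemma itself.
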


\begin{proof}[Proof of Lemma~\ref{lem_4n6}]
If $n \ge 1$, then $\mathcal{F}_{w_{4n+8}}$ 
has the property such that  last two punctures of $\varSigma_{0,4n+8}$ 
have more than $1$ prong (Lemma~\ref{lem_foliation}). 
Thus $\mathcal{F}_{w_{4n+8}}$  extends to 
the unstable foliation on $\varSigma_{0,4n+6}$ by filling last two punctures. 
This means that the pseudo-Anosov homeomorphism $\Phi_{w_{4n+8}}: \varSigma_{0, 4n+8} \rightarrow \varSigma_{0, 4n+8}$ 
extends to the pseudo-Anosov homeomorphism on $\varSigma_{0, 4n+6}$ 
which represents $\Gamma(w_{4n+6})$ 
with the same dilatation as $\Phi_{w_{4n+8}}$. 

The latter statement on  ${\Bbb T}_{w_{4n+6}}$ in  Lemma~\ref{lem_4n6} 
is clear from the definition of the braid $w_{4n+6}$. 
\end{proof}

\begin{proof}[Proof of Proposition~\ref{prop_main}] 
By Lemma~\ref{lem_4n6} together with (\ref{equation_asymp}), (\ref{equation_0}), 
we have 
 $$\lim_{n \to \infty} 2(2n+4) \log (\lambda(w_{4n+8})) = \lim_{n \to \infty} 2(2n+3) \log (\lambda(w_{4n+6}))
 =4  \log \kappa. $$
 Both sides divided by $2$ give us the desired claim. 
\end{proof}

\begin{center}
\begin{figure}
\includegraphics[width=5in]{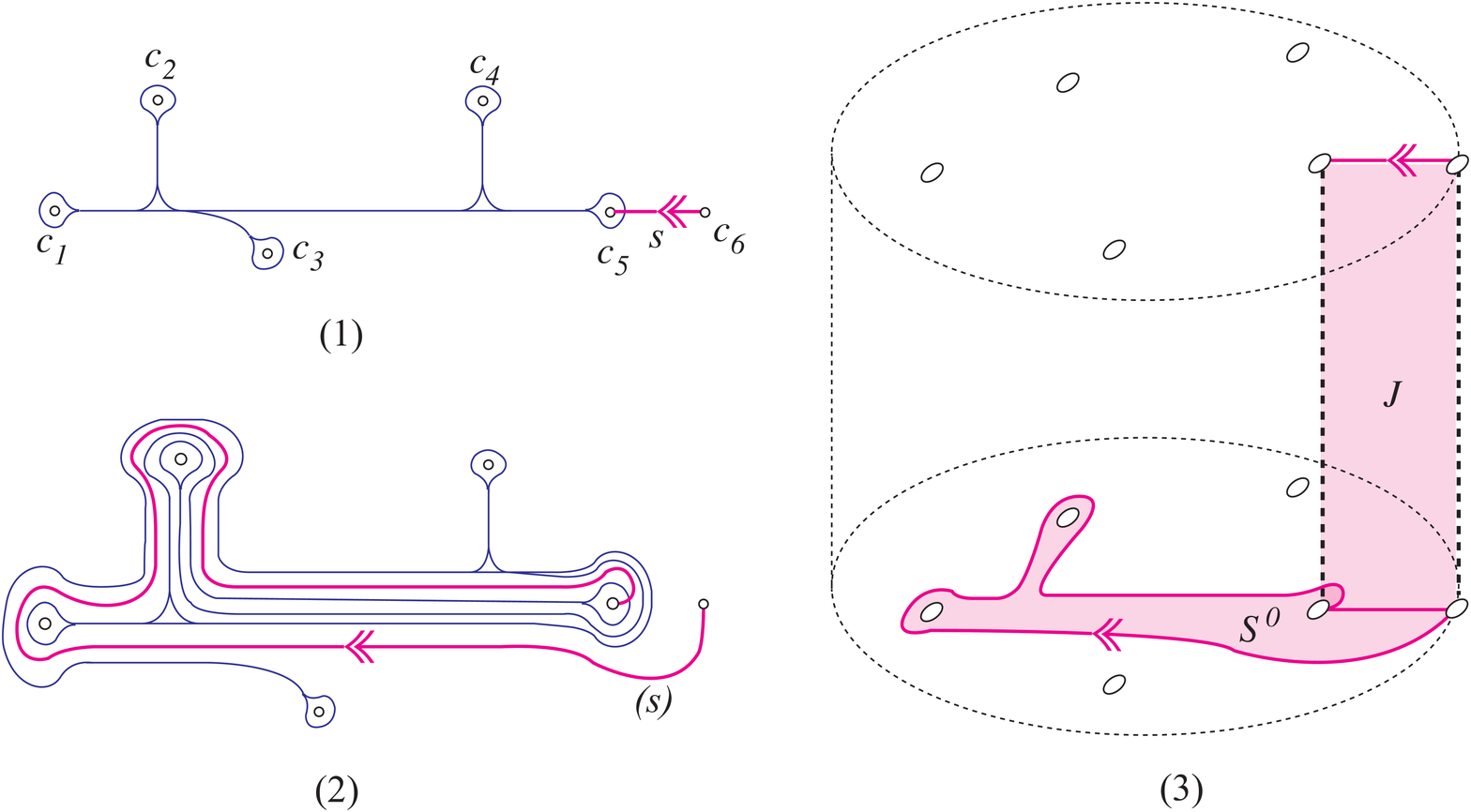}
\caption{
(1) Segment $s$, 
(2) $(s):= \Phi(s)$ up to isotopy relative to endpoints of $ \Phi(s)$. 
See also Figure~\ref{fig_train_suspend}(2). 
(3) Surface 
 $F = S^0 \cup J$ (shaded region) in  ${\Bbb T}_{w_6}$. 
To get ${\Bbb T}_{w_6}$, we glue  
$\varSigma_{0,6} \times \{1\}$ and $\varSigma_{0,6} \times \{0\}$ 
by  $\Phi \in  \Gamma(w_6)$. 
Two ``vertical" dotted lines are the orbits of $c_5$ and $c_6$ for $\Phi^t$.
Dotted two circles  (boundaries of the disks) correspond with 
the last punctures of $\varSigma_{0,6} \times \{1\}$ and $\varSigma_{0,6} \times \{0\}$.}
\label{fig_Surface}
\end{figure}
\end{center}

\begin{center}
\begin{figure}
\includegraphics[width=4.2in]{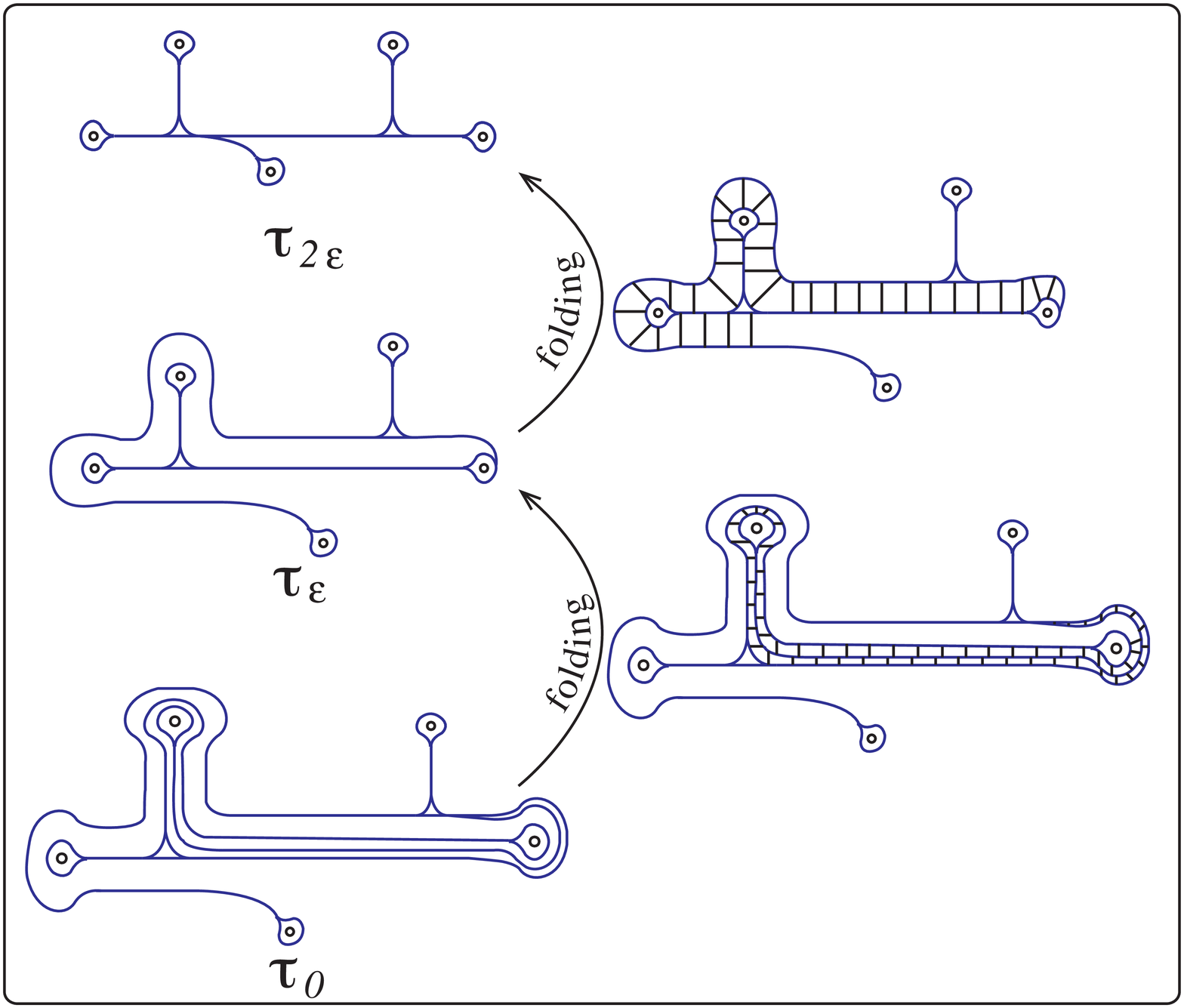}
\caption{
Left column shows train tracks $\tau_0= \Phi(\tau)$ (bottom), $\tau_{\epsilon}$ (middle), 
$\tau_{2 \epsilon} = \tau$ (top). 
$\tau$  is obtained from $\Phi(\tau)$ 
by folding edges between a cusp. 
Right column explains how to fold edges from $\tau_0$ to $\tau_{\epsilon}$ (bottom) and 
from $\tau_{\epsilon}$ to $\tau_{2 \epsilon}$ (top).} 
\label{fig_zip}
\end{figure}
\end{center}

\begin{center}
\begin{figure}
\includegraphics[width=4in]{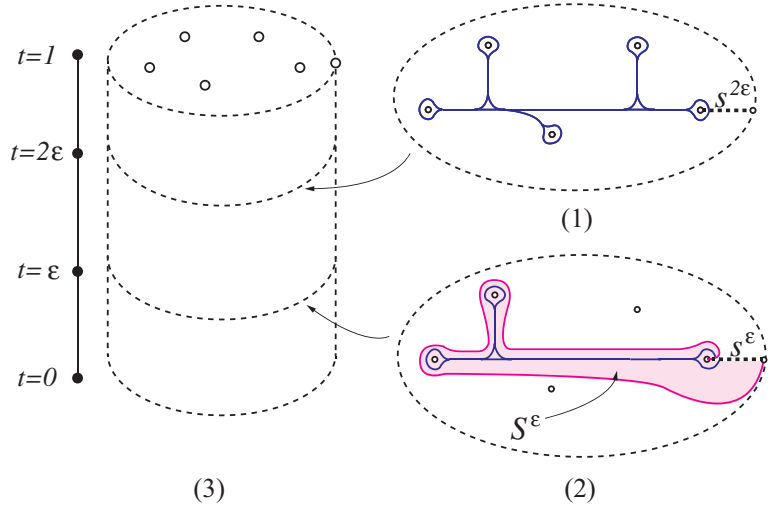}
\caption{(1)  $s^{2 \epsilon}$ (broken line) and 
$\varSigma_{0,6}^{2 \epsilon} \cap \mathcal{B}_{\Omega}$ 
(see also Figure~\ref{fig_zip}(top of left column)). 
(2) $s^{\epsilon}$ (broken line) and 
$S^{\epsilon} \cap \mathcal{B}_{\Omega} \subset S^{\epsilon} $ 
(see also Figure~\ref{fig_zip}(middle of left column). 
(3) $\varSigma_{0,6} \times [0,1] (\supset \displaystyle\bigcup_{0 \le t \le 1} \tau_t \times \{t\})$.} 
\label{fig_Bsurface}
\end{figure}
\end{center}

\begin{center}
\begin{figure}
\includegraphics[width=6.5in]{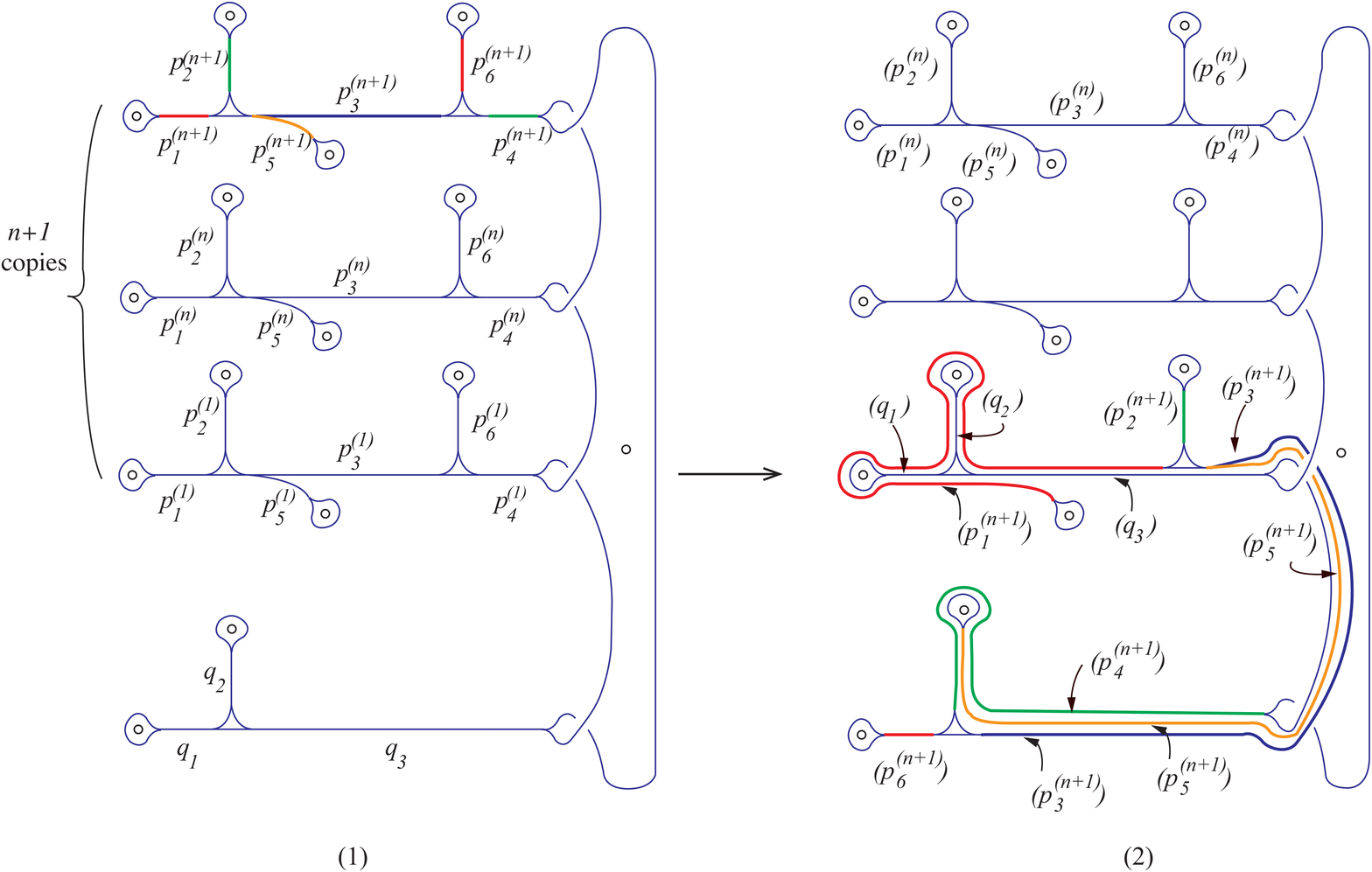}
\caption{(1)  $\tau_{4n+8}$ and  
(2) its image  $\Phi_{w_{4n+8}}(\tau_{4n+8})$  up to isotopy, 
where $n=2$ in this figure.  
Small circles indicate all punctures 
of $\varSigma_{0,4n+8}$ but the last one. 
The last puncture  corresponds to $\partial D$ of a disk $D$ such that 
$\tau_{4n+8} \subset D$.}
\label{fig_TrainImage}
\end{figure}
\end{center}

Finally we ask the following question. 

\begin{ques}[cf. Question~4.2 in \cite{Hironaka2014}]
We know from the proof of 
Proposition~\ref{prop_main} and from Lemma~\ref{lem_6braid} 
that  $4 \log ( \tfrac{1+ \sqrt{5}}{2}+ \tfrac{\sqrt{2+ 2 \sqrt{5}}}{2}) =
 2(2 \log \lambda(\underline{w_6}))$ is an accumulation point of 
 the following set of normalized entropies of pseudo-Anosov elements in $\mathcal{H}({\Bbb H}_g)$: 
$$\{\mathrm{Ent}(\phi) = (2g-2) \log \lambda(\phi)\ |\ \phi \in \mathcal{H}({\Bbb H}_g)\ \mbox{is pseudo-Anosov},\ g \ge 2\}.$$ 
Is the accumulation point $4 \log ( \tfrac{1+ \sqrt{5}}{2}+ \tfrac{\sqrt{2+ 2 \sqrt{5}}}{2})$ the smallest one? 
\end{ques}

\appendix

\section{A finite presentation of $\mathcal{H}({\Bbb H}_g)$}
\label{section_appendix} 

In this appendix, we will prove some claims referred 
in Sections~\ref{subsection_Hilden} and \ref{subsection_HypHand} 
and determine a finite presentation for $\mathcal{H}(\mathbb{H}_g)$ 
(Theorem~\ref{thm:presentaion-HHg}).  

\begin{figure}[ht]
\begin{center}
\includegraphics[height=4cm]{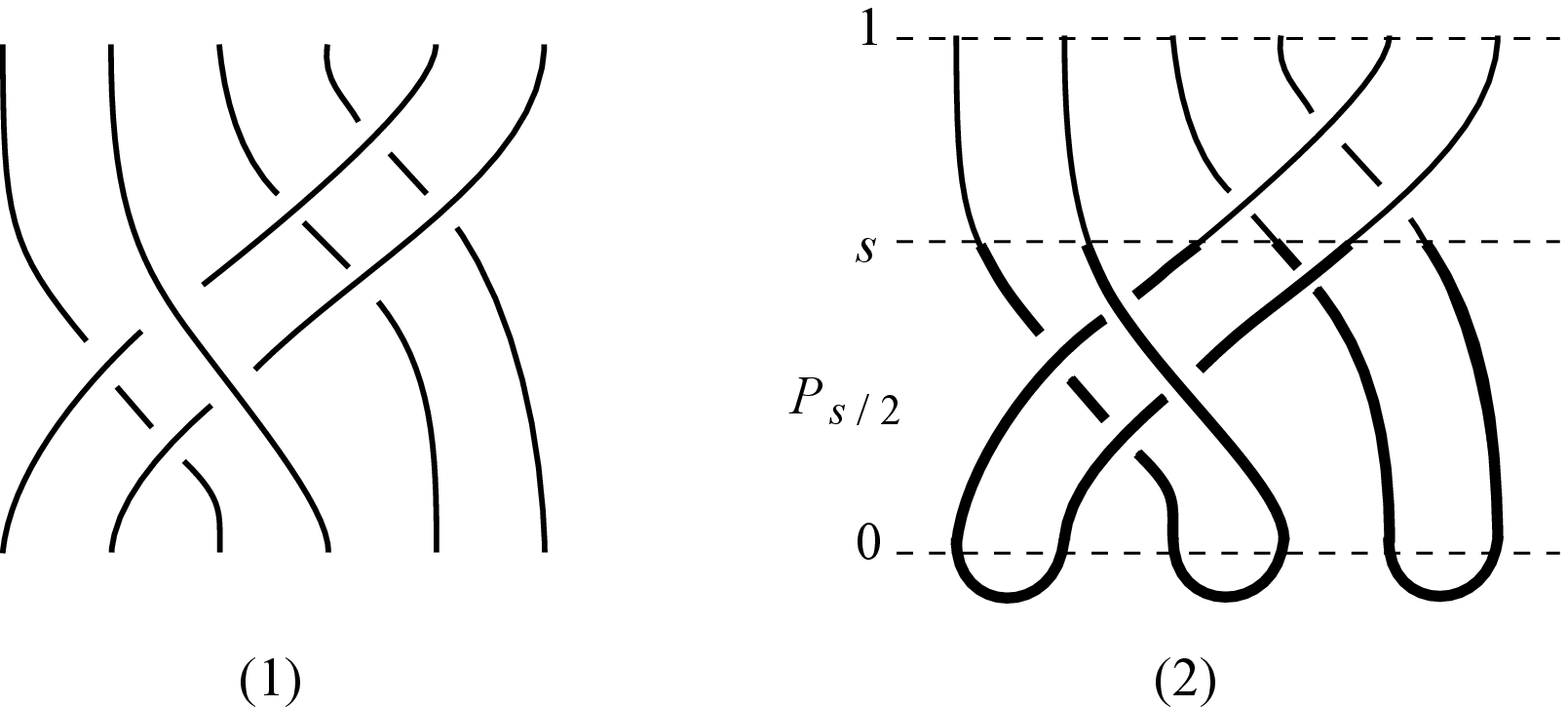}
\caption{(1) $b \in SW_{2n}$. 
(2) A path in $\mathcal{SA}_n$ corresponding to $b$.}
\label{fig:path}
\end{center}
\end{figure}

Here we make some remarks on the spherical wicket group $SW_{2n}$. 
Let $\mathcal{SA}_{n}$ be the space of configurations of $n$ disjoint smooth unknotted 
and unlinked arcs in $D^3$ with endpoints on $\partial D^3$. 
Brendle-Hatcher \cite{Brendle-Hatcher} defined the spherical wicket group to 
be $\pi_1(\mathcal{SA}_{n})$. 
We shall see in Proposition~\ref{prop:SWandPi1} that 
$\pi_1(\mathcal{SA}_{n}) \simeq SW_{2n}$. 
In \cite[p.156--157]{Brendle-Hatcher}, it is shown that the natural homomorphism 
from $\pi_1(\mathcal{SA}_{n})$ to $SB_{2n}$ induced by the map sending a configuration of $n$ arcs 
to the configuration of its endpoints is injective. 
By this injection, we regard $\pi_1(\mathcal{SA}_{n})$ as the subgroup of $SB_{2n}$. 

The wicket group $W_{2n}$ is defined as a subgroup of the braid group $B_{2n}$ 
in the same way as the definition of $SW_{2n}$ 
given in Section~\ref{subsection_Hilden}. 
Let $\mathcal{A}_n$ be the space of configurations of $n$ disjoint smooth unknotted 
and unlinked arcs in 
$\mathbb{R}^3_+ = \{(x,y,z) \in \mathbb{R}^3 \ |\  z \geq 0 \}$ with endpoints on 
$\partial \mathbb{R}^3_+$.
In the same way as $\pi_1(\mathcal{SA}_n)$,  
we regard $\pi_1(\mathcal{A}_{n})$ as a subgroup of  $B_{2n}$. 
Brendle-Hatcher \cite[Propositions 3.2, 3.6]{Brendle-Hatcher} showed that 
$\pi_1 (\mathcal{A}_n)$ is generared by 
$r_i$, $s_i$ $(i \in \{1, \cdots, n-1\})$, 
$t_j$ $(j \in \{1, \cdots, n\})$ 
shown in Figure 5. 
In the beginning of Section~6 in \cite{Brendle-Hatcher},  it is observed that $\pi_1(\mathcal{SA}_n)$ 
is the quotient of $\pi_1(\mathcal{A}_n)$ by 
the normal closure $\langle \langle \vartheta \rangle \rangle$ 
of $\{ \vartheta \}$, where $\vartheta = t_1 s_1 s_2 \cdots s_{n-1} r_{n-1}^{-1} \cdots r_2^{-1}  r_1^{-1} t_1$. 
Especially we see that  $\pi_1(\mathcal{SA}_n)$ is generated by 
$r_i$, $s_i$ and $t_j $ as above.

\begin{prop}\label{prop:SWandPi1}
$SW_{2n} = \pi_1(\mathcal{SA}_{n})$. 
\end{prop}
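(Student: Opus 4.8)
The plan is to regard both $SW_{2n}$ and $\pi_1(\mathcal{SA}_n)$ as subgroups of $SB_{2n}$ and to prove the two inclusions separately. Let $\pi \colon \mathcal{SA}_n \to UC_{2n}(\partial D^3)$ be the map sending a configuration of $n$ arcs to the unordered set of its $2n$ endpoints, where $UC_{2n}(\partial D^3)$ denotes the configuration space of $2n$ unordered points on $\partial D^3$; its fundamental group is $SB_{2n}$. On $\pi_1$ this induces the injection $\pi_* \colon \pi_1(\mathcal{SA}_n) \hookrightarrow SB_{2n}$ recalled above, and the assertion $SW_{2n} = \pi_1(\mathcal{SA}_n)$ is precisely the statement that the image of $\pi_*$ equals $SW_{2n}$.

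For the inclusion $\pi_1(\mathcal{SA}_n) \subseteq SW_{2n}$ I would use the generating set recorded above: $\pi_1(\mathcal{SA}_n)$ is generated by $r_i, s_i$ $(1 \le i \le n-1)$ and $t_j$ $(1 \le j \le n)$, and by construction these loops map under $\pi_*$ to the braids $r_i, s_i, t_j \in SB_{2n}$ of Section~\ref{subsection_Hilden}. Since $^{r_i}{\bf A}$, $^{s_i}{\bf A}$ and $^{t_j}{\bf A}$ are each isotopic to ${\bf A}$ relative to $\partial {\bf A}$ (see Figure~\ref{fig_generator_w}), all of these braids lie in $SW_{2n}$, and hence so does the subgroup $\pi_1(\mathcal{SA}_n)$ they generate.

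The reverse inclusion $SW_{2n} \subseteq \pi_1(\mathcal{SA}_n)$ is the heart of the matter. Given $b \in SW_{2n}$, I would first realize $b$ by an ambient isotopy: picking a loop of $2n$ points on $\partial D^3$ representing $b$ and applying the isotopy extension theorem, one obtains a family $\{\Psi_t\}_{t \in [0,1]}$ of homeomorphisms of $D^3$ with $\Psi_0 = \mathrm{id}$, with $\Psi_t$ dragging the $2n$ marked points along the chosen loop, and with $\Psi_1(\partial {\bf A}) = \partial {\bf A}$. The standard dictionary between the stacking operation and such isotopies gives $\Psi_1({\bf A}) = {}^b{\bf A}$ up to isotopy relative to $\partial {\bf A}$. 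Now $t \mapsto \Psi_t({\bf A})$ is a path in $\mathcal{SA}_n$ from ${\bf A}$ to $^b{\bf A}$ whose endpoint trace is $b$; and since $b \in SW_{2n}$, the configuration $^b{\bf A}$ is isotopic to ${\bf A}$ relative to $\partial {\bf A}$, giving a second path in $\mathcal{SA}_n$ from $^b{\bf A}$ back to ${\bf A}$ along which the endpoints stay fixed, hence with trivial endpoint trace. Concatenating the two paths produces a loop in $\mathcal{SA}_n$ based at ${\bf A}$ with $\pi_*$-image $b$, so that $b \in \pi_1(\mathcal{SA}_n)$.

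The step I expect to be delicate is the identification $\Psi_1({\bf A}) = {}^b{\bf A}$ together with the bookkeeping in the isotopy extension theorem: one must check that $\Psi_t$ really tracks the moving endpoints, so that the endpoint trace of $t \mapsto \Psi_t({\bf A})$ is exactly $b$ rather than $b$ twisted by a correction term, and that the permutation induced by $\Psi_1$ on $\partial {\bf A}$ does not corrupt the reading of the braid. Since $SW_{2n}$ and $\pi_1(\mathcal{SA}_n)$ are both subgroups of $SB_{2n}$, the argument is in any case insensitive to replacing $b$ by $b^{-1}$, which absorbs the usual ambiguity of orientation conventions in the stacking picture. Once this dictionary is pinned down, both inclusions are short.
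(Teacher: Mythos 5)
Your proposal is correct and takes essentially the same approach as the paper: the inclusion $\pi_1(\mathcal{SA}_n) \subseteq SW_{2n}$ via the Brendle--Hatcher generators $r_i, s_i, t_j$, and the reverse inclusion by producing, for $b \in SW_{2n}$, a path in $\mathcal{SA}_n$ from ${\bf A}$ to ${}^b{\bf A}$ whose endpoint trace is $b$ and closing it up with the rel-endpoint isotopy ${}^b{\bf A} \cong {\bf A}$ that the membership $b \in SW_{2n}$ provides. The only difference is cosmetic: the paper builds that first path concretely by sweeping the stacked braid into $D^3$ (Figure~\ref{fig:path}), while you obtain it from the isotopy extension theorem; the two constructions give the same loop up to homotopy.
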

\begin{proof} 
Recall that $r_i$, $s_i$ $(i \in \{1, \cdots, n-1\})$, 
$t_j$ $(j \in \{1, \cdots, n\})$ 
are elements of $SW_{2n}$. 
Hence 
$\pi_1(\mathcal{SA}_{n}) \subset SW_{2n}$. On the other hand, 
for $b \in SW_{2n}$, we define a closed path $P_t (0 \leq t \leq 1)$ in $\mathcal{SA}_{n}$ 
with a base point corresponding to $n$ trivial arcs in $D^3$ as follows: 
$P_0$ and $P_1$ are $n$ trivial arcs in $D^3$, 
$P_{s/2} (0 \leq s \leq 1)$ is $n$ arcs indicated by the thick arcs in  Figure \ref{fig:path}(2)   
and the path from $P_{1/2} = {}^b\bold{A}$ to $P_1$ is an isotopy between 
$^b \bold{A}$ and $\bold{A}$ fixing end points. 
Then the sequence of endpoints of the path $P_t$ is a closed path in 
the configuration space of $2n$ points in $S^2$ whose homotopy class is the braid $b$. 
This shows $b \in \pi_1( \mathcal{SA}_{n})$. 
Hence $SW_{2n} \subset \pi_1(\mathcal{SA}_{n})$. 
\end{proof} 

In the same way as the proof of Proposition \ref{prop:SWandPi1},  
we see that $W_{2n} = \pi_1 (\mathcal{A}_n)$.  
Under the equivalences $W_{2n} = \pi_1 (\mathcal{A}_n)$ and $SW_{2n} = \pi_1(\mathcal{SA}_{n})$, 
we have the following.

\begin{lem}
\label{lem:plane-sphere}
$SW_{2n} \cong W_{2n} /\langle \langle \vartheta \rangle \rangle .$
\end{lem}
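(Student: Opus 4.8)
The plan is to obtain the isomorphism directly by splicing together the three identifications already available at this point in the text, so the proof itself is short; the only substantive point is checking that these identifications are mutually compatible. First I would record the two descriptions of the groups as fundamental groups of arc-configuration spaces: the equality $SW_{2n} = \pi_1(\mathcal{SA}_n)$ is exactly Proposition~\ref{prop:SWandPi1}, and $W_{2n} = \pi_1(\mathcal{A}_n)$ follows by the identical argument, replacing $D^3$ by $\mathbb{R}^3_+$ and $\mathcal{SA}_n$ by $\mathcal{A}_n$ throughout the path construction used there.

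Next I would invoke the observation of Brendle-Hatcher quoted in the discussion above, namely that the map $\pi_1(\mathcal{A}_n) \to \pi_1(\mathcal{SA}_n)$ induced by the inclusion $\mathbb{R}^3_+ \hookrightarrow D^3$ is surjective with kernel the normal closure $\langle \langle \vartheta \rangle \rangle$, where $\vartheta = t_1 s_1 s_2 \cdots s_{n-1} r_{n-1}^{-1} \cdots r_2^{-1} r_1^{-1} t_1$. Assembling these three facts yields the chain
\[
SW_{2n} = \pi_1(\mathcal{SA}_n) \cong \pi_1(\mathcal{A}_n) / \langle \langle \vartheta \rangle \rangle = W_{2n} / \langle \langle \vartheta \rangle \rangle,
\]
which is exactly the assertion of the lemma.

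The one place that warrants care, and what I expect to be the only real (if mild) obstacle, is verifying that the kernel $\langle \langle \vartheta \rangle \rangle$ computed inside $\pi_1(\mathcal{A}_n)$ is transported faithfully to the subgroup picture inside the braid groups. Concretely, both inclusions $\pi_1(\mathcal{A}_n) \hookrightarrow B_{2n}$ and $\pi_1(\mathcal{SA}_n) \hookrightarrow SB_{2n}$ are given by passing from a loop of arc configurations to the induced loop of its $2n$ endpoints, and the inclusion $\mathbb{R}^3_+ \hookrightarrow D^3$ restricts on boundaries to the standard inclusion of the plane $\partial \mathbb{R}^3_+$ into the sphere $\partial D^3$. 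Hence the map induced on fundamental groups is intertwined with the natural capping homomorphism $B_{2n} \to SB_{2n}$, so that the generators $r_i$, $s_i$, $t_j$ of $W_{2n}$ are carried to the like-named generators of $SW_{2n}$ and the word $\vartheta$ is carried to $\vartheta$. Once this compatibility is in place the displayed chain of identifications is legitimate, the two normal closures agree, and the proof is complete.
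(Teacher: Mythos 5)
Your proposal is correct and follows essentially the same route as the paper: the paper states Lemma~\ref{lem:plane-sphere} as an immediate consequence of Proposition~\ref{prop:SWandPi1}, the analogous identification $W_{2n}=\pi_1(\mathcal{A}_n)$, and the Brendle--Hatcher observation that $\pi_1(\mathcal{SA}_n)$ is the quotient of $\pi_1(\mathcal{A}_n)$ by $\langle\langle\vartheta\rangle\rangle$. Your additional check that these identifications are intertwined with the capping homomorphism $B_{2n}\to SB_{2n}$, so that the generators and the word $\vartheta$ correspond, is exactly the compatibility the paper leaves implicit.
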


\begin{rem}\ 
\begin{enumerate}

\item[(1)] 
Brendle-Hatcher used notations $W_n$ and $SW_n$ for 
$\pi_1(\mathcal{A}_n)$ and $\pi_1(\mathcal{SA}_n)$ respectively \cite{Brendle-Hatcher}.  
In this paper, we use notations $W_{2n}$ and $SW_{2n}$ rather than $W_n$ and $SW_n$ 
for the same groups, 
because we defined $W_{2n}$ and $SW_{2n}$ as subgroups of $B_{2n}$ and $SB_{2n}$ respectively. \\ 

\item[(2)]
In \cite{Brendle-Hatcher}, 
elements of $\pi_1(\mathcal{SA}_{n})$ are applied from left to right and our convention is opposed to this. 
Hence in our paper, 
we need to take the inverse of their generators and reverse the order of letters in their relations. 
\end{enumerate}
\end{rem}

As promised in Section~\ref{subsubsection_relation}, 
we now prove the following.

\begin{prop}\label{prop:uniqueness}
Let $\psi_1$ and $\psi_2$ be homeomorphisms of $(D^3, \bold{A})$. 
If the restrictions of $\psi_1$ and $\psi_2$ over $S^2 = \partial D^3$ are isotopic as 
homeomorphisms of $(S^2, \partial \bold{A})$ then 
$\psi_1$ and $\psi_2$ are isotopic as homeomorphisms of $(D^3, \bold{A})$. 
\end{prop}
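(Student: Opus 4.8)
The plan is to reduce to showing that $\psi := \psi_2^{-1}\circ\psi_1$ is isotopic to the identity through homeomorphisms of $(D^3,{\bf A})$; composing the resulting isotopy with $\psi_2$ then gives $\psi_1\simeq\psi_2$. By hypothesis $\psi|_{S^2}$ is isotopic to $\mathrm{id}_{S^2}$ in $(S^2,\partial{\bf A})$. First I would absorb this boundary isotopy into a collar: near $\partial D^3$ the arcs ${\bf A}$ meet a collar $S^2\times[0,\epsilon]$ in the product $\partial{\bf A}\times[0,\epsilon]$, so applying the isotopy extension theorem in the collar, damped to the identity as one moves inward, yields an isotopy of $D^3$ that preserves ${\bf A}$ (each slice $\partial{\bf A}\times\{s\}$ being preserved setwise) and is the identity outside the collar. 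After this move I may assume $\psi|_{S^2}=\mathrm{id}$. Since $\psi$ now fixes $\partial{\bf A}$ pointwise and each component of ${\bf A}$ is the unique arc of ${\bf A}$ with its given endpoints, $\psi$ carries each $A_i$ to itself fixing its two endpoints; hence $\psi|_{A_i}$ is isotopic to the identity rel endpoints, and extending these arc isotopies ambiently (again by isotopy extension, supported near ${\bf A}$ and preserving ${\bf A}$) lets me further assume $\psi$ fixes ${\bf A}$ pointwise.

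The next step uses spanning disks. Because the wickets are unknotted and unlinked, I can choose pairwise disjoint, boundary-parallel embedded disks $E_1,\dots,E_n$ with $\partial E_i=A_i\cup\gamma_i$, where $\gamma_i\subset\partial D^3$ and $E_i\cap{\bf A}=A_i$. As $\psi$ fixes each $A_i$ and all of $\partial D^3$ pointwise, the disk $\psi(E_i)$ has the same boundary $A_i\cup\gamma_i$ and again meets ${\bf A}$ only along $A_i$. Using that $D^3$ is irreducible, I would isotope $\psi$ through homeomorphisms preserving ${\bf A}$ and fixing $\partial D^3$ so that $\psi(E_i)=E_i$ for every $i$: working in the complement of ${\bf A}$, the disks $E_i$ and $\psi(E_i)$ share their boundary and so are isotopic rel boundary, and a standard innermost-disk and outermost-arc reduction makes these isotopies simultaneous, mutually disjoint, and disjoint from ${\bf A}$. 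Once $\psi(E_i)=E_i$, the restriction $\psi|_{E_i}$ fixes $\partial E_i$, so Alexander's trick on the $2$-disk isotopes $\psi$ to fix each $E_i$ pointwise as well.

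Finally I would cut $D^3$ along $E=\bigcup_i E_i$. Since the $E_i$ are disjoint boundary-parallel spanning disks for the arcs, the result is a single $3$-ball $B$ in whose boundary the arcs ${\bf A}$ now lie, and $\psi$ descends to a self-homeomorphism of $B$ fixing $\partial B$ pointwise. Alexander's trick supplies an isotopy of this map to $\mathrm{id}_B$ rel $\partial B$; because it fixes $\partial B\supset{\bf A}$ throughout, it reassembles to an isotopy of $\psi$ to $\mathrm{id}_{D^3}$ through homeomorphisms of $(D^3,{\bf A})$, which completes the argument. I expect the real work to sit in the middle paragraph: arranging that the whole disk system $E$ is carried to itself while every intermediate homeomorphism still preserves ${\bf A}$. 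The delicate point is that $\psi(E)$ and $E$ share their boundaries but may meet in their interiors, so one must run the innermost-curve reduction inside the arc complement (equivalently, inside the genus-$n$ handlebody $\overline{D^3\setminus\mathcal N({\bf A})}$) to eliminate those intersections without ever dragging a disk across a component of ${\bf A}$.
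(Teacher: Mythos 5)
Your overall architecture (absorb the boundary isotopy into a collar, straighten $\psi$ along the arcs, normalize $\psi$ on a system of spanning disks, cut and finish with Alexander's trick) is viable, and its endgame genuinely differs from the paper's, which instead makes $\psi$ the identity on tubular neighborhoods $N_i$ of the wickets and then quotes the uniqueness up to isotopy of extensions of boundary homeomorphisms of a handlebody. But your middle paragraph has a genuine gap, and it sits exactly where the paper places its key argument. You assert that since $E_i$ and $\psi(E_i)$ share their boundary, they are isotopic rel boundary by innermost-disk/outermost-arc reductions run in the arc complement. The common boundary $A_i\cup\gamma_i$, however, is \emph{not} contained in $\partial D^3$: it contains the arc $A_i$ lying in the interior of $D^3$, so the standard fact that properly embedded disks in an irreducible $3$-manifold with the same boundary are isotopic rel boundary does not apply as stated. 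Concretely, nothing in your normalization so far prevents $\psi$ from twisting around the tube $\mathcal{N}(A_i)$: a priori $\psi(E_i)$ could wind $k$ times around $A_i$ relative to $E_i$. If $k\neq 0$, the two disks cannot be made disjoint near $A_i$ (their intersection contains arcs ending on $A_i$ that no innermost-circle move removes), so the reduction you describe cannot get started. Equivalently, inside the handlebody $X=\overline{D^3\setminus\mathcal{N}({\bf A})}$ the truncated disks $E_i\cap X$ and $\psi(E_i)\cap X$ do \emph{not} share their boundary: their boundary curves differ by $k$ times the core of the annulus $\partial\mathcal{N}(A_i)\setminus\partial D^3$, i.e.\ by $k$ meridians of $A_i$. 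So ``working in the arc complement'' does not dissolve the problem; it is the problem.

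What is missing is the step that kills this winding, and it is precisely the paper's central lemma. Since $\psi(E_i)\cap X$ is an embedded disk in $X$, its boundary is null-homotopic in $X$; but that boundary represents $x_i^{k}$ in $\pi_1(X)$, where $x_i$ is the meridian of $A_i$, a free generator of $\pi_1(X)\cong F_n$; hence $k=0$. (In the paper this appears as the claim that $\psi_2|_{U_i}$ cannot be a nontrivial power of a Dehn twist, proved by observing that $\psi_2(\partial F_i')$ bounds the disk $\psi_2(F_i')$ in the complementary handlebody.) Once this argument is inserted, the rest of your plan does go through: near-$A_i$ disjointness, innermost circles using irreducibility of $X$, pushing across balls, Alexander's trick on each $E_i$, cutting along $E=\bigcup_i E_i$ to a ball $B$ with ${\bf A}\subset\partial B$, and a final Alexander isotopy rel $\partial B$ that reassembles to an isotopy of the pair $(D^3,{\bf A})$; this route then replaces the paper's appeal to injectivity of the handlebody mapping class group by an explicit cut-and-paste argument. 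But as written, the crucial obstruction --- tube twisting --- is neither noticed nor addressed, so the proposal is incomplete at its crux.
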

\begin{figure}[ht]
\begin{center}
\includegraphics[height=4cm]{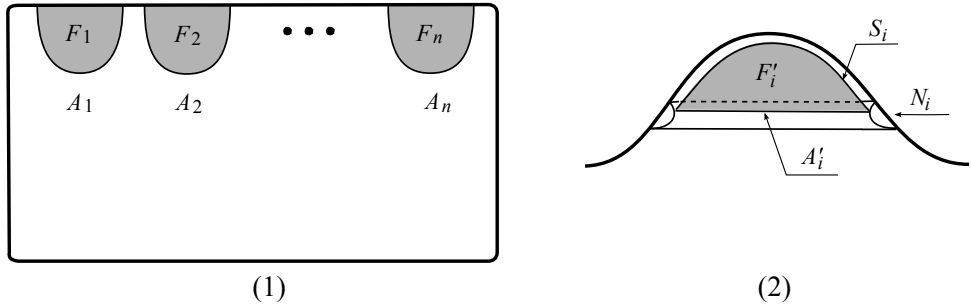}
\caption{(1) $F_i$ is a disk whose boundary is the union of the wicket $A_i$ and an arc on $\partial D^3$. 
(2) $N_i$ is a regular neighborhood of $A_i$, and $F'_i$ is a meridian disk of 
a handlebody $D^3 \setminus (N_1 \cup \cdots \cup N_n)$.  }
\label{fig:disks1}
\end{center}
\end{figure}
\begin{proof}
At first, 
we assume that $\psi_1 = id$, $\psi_2|_{\partial D^3} = id_{\partial D^3}$. 
Since $\psi_2(A_i) = A_i$ and $\psi_2|_{\partial D^3} = id_{\partial D^3}$, 
especially, $\psi_2|_{\partial A_i} = id_{\partial A_i}$, 
we can isotope $\psi_2$ so that $\psi_2|_{A_i} = id_{A_i}$ 
with an isotopy preserving $\bold{A}$ setwise. 
Furthermore, we isotope $\psi_2$ so that $\psi_2(N_i) = N_i$ 
for a regular neighborhood $N_i$ of $A_i$ in $D^3$. 
We remark that $D^3 \setminus (N_1 \cup \cdots \cup N_{n})$ is homeomorphic 
to a handlebody $\mathbb{H}_{n}$. 
The set $\partial D^3 \cap \partial N_i$ consists of two disks $d_{2i-1}$ and $d_{2i}$,  
which are neighborhoods of two points $\partial A_i$. 
The boundary $\partial (D^3 \setminus (N_1 \cup \cdots \cup N_{n}))$ 
is a union of $P = \partial D^3 \setminus (d_1 \cup \cdots \cup d_{2n})$ 
and $U_i = \partial N_i \setminus (d_{2i-1} \cup d_{2i})$.  
We consider the restriction of $\psi_2$ on $\partial (D^3 \setminus (N_1 \cup \cdots \cup N_{n}))$. 
Then $\psi_2|_{P} = id$ and $\psi_2|_{U_i}$ is isotopic to the identity or a product of the Dehn twist 
about the core of $U_i$. 
We will show that $\psi_2|_{U_i}$ is isotopic to the identity. 
Let $F_i$ be a disk in $D^3$ whose boundary is a union of 
$A_i$ and an arc on $\partial D^3$ (see Figure \ref{fig:disks1} (1)). 
Let $F_i' = F_i \setminus N_i$, then this is a meridian disk of $D^3 \setminus (N_1 \cup \cdots \cup N_{n})$ 
and its boundary is a union of two arcs $S_i = F_i' \cap P$, $A_i' = F_i' \cap U_i$ (see Figure \ref{fig:disks1} (2)). 
If we assume that $\psi_2 | _{U_i}$ is not isotopic to the identity, 
then $\psi_2(\partial F_i') = \psi_2(S_i) \cup \psi_2 (A_i') = 
S_i \cup \psi_2 (A_i')$ is not null-homotopic in $D^3 \setminus (N_1 \cup \cdots \cup N_{n})$, 
which contradicts the fact that $\psi_2(\partial F_i')$ bounds a disk $\psi_2(F_i')$ in 
$D^3 \setminus (N_1 \cup \cdots \cup N_{n})$. 
Therefore, $\psi_2|_{U_i}$ is isotopic to the identity. 
Furthermore, we can isotope $\psi_2$ so that $\psi_2|_{N_i} = id_{N_i}$. 
Since the extension of 
a homeomorphism of $\partial (D^3 \setminus (N_1 \cup \cdots \cup N_{n}))$
to the $3$-dimensional handlebody $D^3 \setminus (N_1 \cup \cdots \cup N_{n})$ 
is unique up to isotopy, we have an isotopy between 
$\psi_2|_{D^3 \setminus (N_1 \cup \cdots \cup N_{n})}$ 
and $id_{D^3 \setminus (N_1 \cup \cdots \cup N_{n})}$. 
Hence $\psi_2$ is isotopic to $id_{D^3}$ preserving $\bold{A}$ as a set. 

Next, we assume that $\psi_1|_{\partial D^3} = \psi_2|_{\partial D^3}$. 
Then $\psi_1'=\psi_1^{-1} \circ \psi_1$, $\psi_2' = \psi_1^{-1} \circ \psi_2$ 
satisfy $\psi_1'= id$, $\psi_2'|_{\partial D^3} = id_{\partial D^3}$. 
By applying the argument of the previous paragraph to $\psi_1'$ and $\psi_2'$, we have 
an isotopy $\mathbb{G}'_t \; : \; D^3 \to D^3$ $(0 \leq t \leq 1)$ 
between $\psi_1'$ and $\psi_2'$ in $\mathrm{Homeo}_+(D^3, \bold{A})$. 
Then $\mathbb{G}_t = \psi_1 \circ \mathbb{G}'_t$ is an isotopy between 
$\psi_1$ and $\psi_2$ in $\mathrm{Homeo}_+(D^3, \bold{A})$. 

Finally, we assume that $\psi_1|_{\partial D^3}$ and 
$\psi_2|_{\partial D^3}$ are isotopic in 
$\mathrm{Homeo}_+(\partial D^3, \partial \bold{A})$, 
that is to say, there is an isotopy 
$\mathbb{F}_t : \partial D^3 \to \partial D^3$ 
$(0 \leq t \leq 1)$ fixing $\partial \bold{A}$ such that 
$\mathbb{F}_0 = \psi_1 |_{\partial D^3}$, 
$\mathbb{F}_1 = \psi_2 |_{\partial D^3}$. 
We set the parametrization of the regular neighborhood $N(\partial D^3)$ 
of $\partial D^3$ by $\partial D^3 \times [0,1]$ so that 
$\partial D^3 \times \{ 0 \} = \partial D^3$, 
$\partial D^3 \times \{ 1 \} \subset \mathrm{int}(D^3)$. 
We define an isotopy $\mathbb{I}_t : D^3 \to D^3$ $(0 \leq t \leq 1)$ 
by 
$$
\mathbb{I}_t (x) 
=
\begin{cases}
(\mathbb{F}_{t(1-s)} \circ (\psi_1|_{\partial D^3})^{-1} (p), s) &\text{ if } 
x = (p,s) \in \partial D^3 \times [0,1] = N(\partial D^3), \\
x &\text { if } x \not\in N(\partial D^3). 
\end{cases}
$$
Then $\mathbb{J}_t = \mathbb{I}_t \circ \psi_1$ is an isotopy in 
$\mathrm{Homeo}_+ (D^3, \bold{A})$ so that 
$\mathbb{J}_0 = \psi_1$, 
$\mathbb{J}_1|_{\partial D^3} = \psi_2|_{\partial D^3}$. 
By the argument of the previous paragraph, there is an isotopy $\mathbb{G}_t$ 
between $\mathbb{J}_1$ 
and $\psi_2$ in $\mathrm{Homeo}_+(D^3, \bold{A})$. 
The concatenation of $\mathbb{J}_t$ and $\mathbb{G}_t$ is an isotopy 
between $\psi_1$ and $\psi_2$ in $\mathrm{Homeo}_+ (D^3, \bold{A})$. 
\end{proof}

\begin{figure}[ht]
\begin{center}
\includegraphics[height=7.5cm]{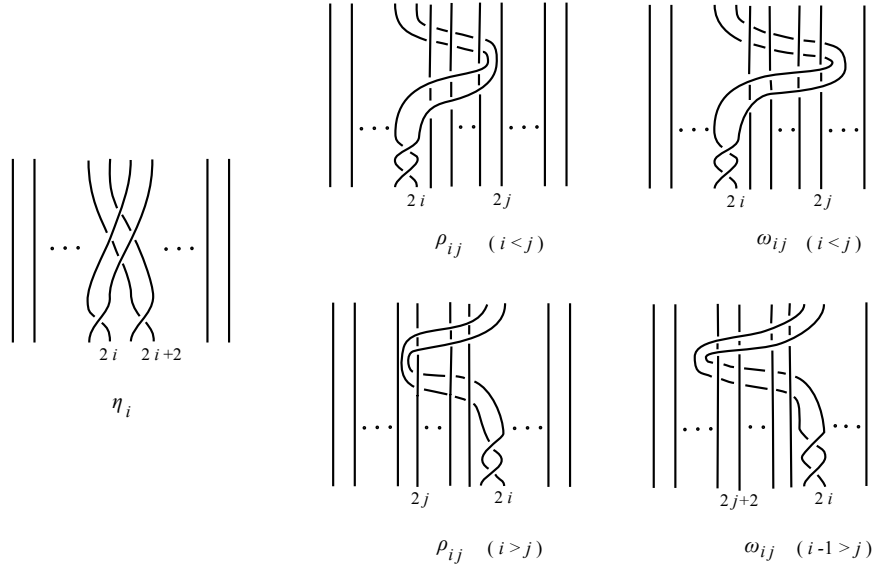}
\caption{$\eta_i$, $\rho_{i,j}$, $\omega_{i,j}$ 
from left to right.  
If $i < j$, then $\rho_{i,j}$ and $\omega_{i,j}$ are the braids on the top. 
If $i>j$ (resp. If $i-1> j$), then $\rho_{i,j}$ (resp. $\omega_{i,j}$) are the braids on the bottom. } 
\label{fig:Hilden}
\end{center}
\end{figure}

We are now ready to prove Theorem~\ref{thm_HiWi} 
as promised  in Section~\ref{subsubsection_relation}.

\begin{proof}[Proof of Theorem~\ref{thm_HiWi}] 
By Proposition \ref{prop:uniqueness}, we regard $\pi_0(\mathrm{Homeo}_+(D^3, \bold{A}))$ 
as a subgroup $SH_{2n}$ of $\mathrm{Mod}(\varSigma_{0,2n})$. 
The following sequence is exact (see \cite[p.245]{Farb-Margalit} for example).   
$$
0 \to \langle \Delta^2 \rangle \to SB_{2n} \overset{\Gamma}{\to} \mathrm{Mod}(\varSigma_{0,2n}) 
\to 1. 
$$
As an immediate consequence of Theorem 5 in \cite{Hilden},
we see that $SH_{2n}$ is generated by
$\Gamma (\sigma_{2i-1})$ $(i = 1, \ldots, n)$, $\Gamma(\eta_i)$ $(i=1,
\ldots, n-1)$,
$\Gamma(\rho_{ij})$ $(i,j=1, \ldots, n, j \not= i)$,
and $\Gamma(\omega_{ij})$ $(i,j=1, \ldots, n, j \not= i-1, i)$,
where
$\eta_i$,  $\rho_{ij}$, $\omega_{ij}$ are as shown in Figure \ref{fig:Hilden}.
We remark that in the case of the braid $\rho_{ij}$,  
the $(2i-1)$st and $2i$th strings pass 
between the $(2j-1)$st and $2j$th strings. 
On the other hand, 
in the case of the braid $\omega_{ij}$, the $(2i-1)$st and $2i$th strings
pass between the $2j$th and $(2j+1)$st strings.
As products of  $r_i$, $s_i$, $t_j$, these braids are expressed as follows,
$$
\begin{aligned}
\eta_i & = s_i t_i t_{i+1}, \\
\rho_{ij} & =
\begin{cases} s_i s_{i+1} \cdots s_{j-2} s_{j-1} r_{j-1} s_{j-2}
\cdots s_{i+1} s_i t_i^2
&\text{ if } i < j, \\
s_{i-1} s_{i-2} \cdots s_{j+1} s_j r_j^{-1} s_{j+1} \cdots s_{i-2}
s_{i-1} t_i^2
&\text{ if } i>j,
\end{cases}
\\
\omega_{ij} & =
\begin{cases} s_i s_{i+1} \cdots s_{j-2} s_{j-1}^2 s_{j-2} \cdots
s_{i+1} s_i t_i^2
&\text{ if } i<j, \\
s_{i-1} s_{i-2} \cdots s_{j+2} s_{j+1}^2 s_{j+2} \cdots s_{i-2} s_{i-1} t_i^2
&\text{ if } i-1>j.
\end{cases}
\end{aligned}
$$
On the other hand, 
Brendle-Hatcher \cite{Brendle-Hatcher} showed that 
$\pi_1(\mathcal{SA}_n)(=SW_{2n})$ is generated by 
$r_i$, $s_i$ 
$(i \in \{1, \cdots, n-1\})$, 
$t_{j}$ $(1 \in \{1, \cdots, n\})$. 
The images of these generators by $\Gamma$ are in $SH_{2n}$,  and 
$\Gamma(\eta_i)$, $\Gamma(\rho_{ij})$, $\Gamma(\omega_{ij})$ are written by  products of 
these images. 
Therefore we see that $\Gamma(SW_{2n}) = SH_{2n} $. 
On the other hand, $\Delta^2 = (s_{n-1} \cdots  s_2  s_1 t_1^2)^{n}$ 
is in $SW_{2n}$, and hence $SW_{2n} = \Gamma^{-1} (SH_{2n})$.  
As a result, Theorem~\ref{thm_HiWi} holds. 
\end{proof}

Let 
$\mathrm{SHomeo}_+(\varSigma_g)$ be the subgroup of  $\mathrm{Homeo}_+(\varSigma_g)$ 
which consists of 
the orientation preserving homeomorphisms on $\varSigma_g \simeq \partial {\Bbb H}_g$ 
that commute with $\mathcal{S}: \partial{\Bbb H}_g \rightarrow \partial{\Bbb H}_g$.
In order to prove Theorem~\ref{thm_BH}, Birman-Hilden showed the following.

\begin{prop}[Theorem~7 in \cite{Birman-Hilden}]
\label{prop:sym-isotopy-BH}
Let $\phi_1$ and $\phi_2 \in \mathrm{SHomeo}_+(\varSigma_g)$ be isotopic in 
$\mathrm{Homeo}_+(\varSigma_g)$. 
Then $\phi_1$ and $\phi_2$ are isotopic in $\mathrm{SHomeo}_+(\varSigma_g)$. 
\end{prop}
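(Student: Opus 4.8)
The plan is to reduce to a statement about a single map and then exploit the $\mathbb{Z}/2$-symmetry together with the (equivariant) contractibility of the identity component of the homeomorphism group. First I would reduce to the case $\phi_1 = \mathrm{id}$. Since $\phi_1,\phi_2 \in \mathrm{SHomeo}_+(\varSigma_g)$, the product $\psi := \phi_2 \phi_1^{-1}$ again commutes with $\mathcal{S}$, and $\phi_1$ is isotopic to $\phi_2$ in $\mathrm{Homeo}_+(\varSigma_g)$ exactly when $\psi$ is isotopic to the identity there. Moreover, right translation by the fixed symmetric map $\phi_1$ carries a symmetric isotopy from $\mathrm{id}$ to $\psi$ to a symmetric isotopy from $\phi_1$ to $\phi_2$, each level remaining symmetric as a product of symmetric homeomorphisms. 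Hence it suffices to prove: if $\psi \in \mathrm{SHomeo}_+(\varSigma_g)$ is isotopic to $\mathrm{id}$ in $\mathrm{Homeo}_+(\varSigma_g)$, then it is isotopic to $\mathrm{id}$ in $\mathrm{SHomeo}_+(\varSigma_g)$.

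For the main step, let $\{H_t\}_{0\le t\le1}$ be an ordinary isotopy with $H_0 = \mathrm{id}$ and $H_1 = \psi$, viewed as a path in the identity component $G := \mathrm{Homeo}_0(\varSigma_g)$. Conjugating by the involution produces a second path $K_t := \mathcal{S} H_t \mathcal{S}$ in $G$; because $\mathcal{S}^2 = \mathrm{id}$ and $\psi$ is symmetric, $K$ also runs from $\mathrm{id}$ to $\psi$. The involution $\tau(\alpha) = \mathcal{S}\alpha\mathcal{S}$ is a $\mathbb{Z}/2$-action on $G$ whose fixed set $G^{\tau}$ is precisely the group of symmetric homeomorphisms lying in $G$, and the claim is exactly that $\psi$ and $\mathrm{id}$ lie in the same path-component of $G^{\tau}$. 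Thus the problem becomes showing that the fixed set $G^{\tau}$ of this action on the contractible group $G$ is connected; here I use that $\mathrm{Homeo}_0(\varSigma_g)$ is contractible for $g \ge 2$ (Hamstrom, Earle--Eells), so in particular $\pi_1(G)=0$ and the two paths $H$ and $K$ are homotopic rel endpoints.

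To produce an honestly $\mathcal{S}$-invariant isotopy I would make the contraction itself equivariant. Since $\langle \mathcal{S}\rangle$ is finite and the quotient $\varSigma_g/\mathcal{S}$ is a sphere with $2g+2$ cone points of order two, whose orbifold Euler characteristic $1-g$ is negative for $g\ge2$, the base carries a hyperbolic orbifold metric, and its pullback is an $\mathcal{S}$-invariant hyperbolic metric on $\varSigma_g$. The Earle--Eells deformation retraction of $G$ to a point is built canonically from such a metric via the contractibility of Teichm\"uller space, hence commutes with the isometry $\mathcal{S}$, i.e. is $\tau$-equivariant. Restricting the equivariant contraction to the fixed set shows $G^{\tau}$ is contractible, in particular connected, which yields the desired symmetric isotopy. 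I expect this last equivariance to be the main obstacle: one must either verify the naturality of the (equivariant, Teichm\"uller-theoretic) Earle--Eells contraction under the isometric involution, or, in a more hands-on route, average the rel-endpoints homotopy between $H$ and $K=\tau(H)$ against the $\mathbb{Z}/2$-action to fold out an invariant path. Throughout one works smoothly and transfers the conclusion to $\mathrm{Homeo}_+$ by the standard equivalence between the diffeomorphism and homeomorphism groups of surfaces; the genus-two case is included since $g\ge2$.
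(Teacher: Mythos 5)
Your reduction to $\phi_1 = \mathrm{id}$ and your reformulation of the problem are both correct: writing $G = \mathrm{Homeo}_0(\varSigma_g)$ for the identity component of $\mathrm{Homeo}_+(\varSigma_g)$ and $\tau(\alpha) = \mathcal{S}\alpha\mathcal{S}$, the proposition is precisely the assertion that the fixed subgroup $G^{\tau} = G \cap \mathrm{SHomeo}_+(\varSigma_g)$ is path-connected. The genuine gap is that nothing in your sketch establishes this, and the step you yourself flag as ``the main obstacle'' is not a verification but the whole content of the theorem. Contractibility of $G$ by itself gives no control on $\pi_0(G^{\tau})$: fixed-point sets of involutions on contractible spaces can be disconnected, or even empty, in infinite dimensions. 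For instance, on the unit sphere $S$ of $\mathbb{R}\oplus\ell^2$ (a contractible space) the involution $(t,v)\mapsto (t,-v)$ has exactly two fixed points; these are joined by paths in $S$, any two such paths are homotopic rel endpoints, and yet there is no path inside the fixed set joining them. This example also kills your fallback argument: knowing $\pi_1(G)=0$, so that $H$ and $K=\tau\circ H$ are homotopic rel endpoints, cannot be ``averaged'' or ``folded'' into a path of $\tau$-fixed points --- there is no averaging operation in a homeomorphism group, and the Hilbert-sphere example shows that no purely homotopy-theoretic argument of this shape can exist. (Smith theory, which would give connectivity of fixed sets, is a finite-dimensionality statement and does not apply to $G$.)

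Your other route --- an $\mathcal{S}$-equivariant contraction of $G$ built from an $\mathcal{S}$-invariant hyperbolic metric --- is the right kind of input and can in fact be carried out, but your claim that the Earle--Eells contraction ``is built canonically from such a metric, hence commutes with $\mathcal{S}$'' is unproved and is not a routine naturality check: the Earle--Eells argument involves non-canonical choices (notably a section of the bundle of hyperbolic metrics over Teichm\"uller space), and making the whole construction equivariant amounts to redoing the theory equivariantly. The known ways to do this use genuinely equivariant technology, e.g.\ conformally natural extensions in the sense of Douady--Earle, or the equivariant Teichm\"uller-theoretic approach of MacLachlan--Harvey, where one shows that the space of $\mathcal{S}$-invariant hyperbolic metrics is nonempty and contractible and that $G^{\tau}$ acts on it freely with contractible quotient (the Teichm\"uller space of the quotient orbifold). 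In other words, completing your plan means reproving the Birman--Hilden theorem by a different method, not citing a known naturality. For comparison, the paper itself gives no proof of this proposition: it quotes it as Theorem~7 of \cite{Birman-Hilden}, whose argument instead works through the two-fold branched covering $\varSigma_g \rightarrow S^2$ --- one shows that the induced homeomorphism of the sphere with its $2g+2$ branch points is isotopic to the identity preserving the branch set, lifts that isotopy, and concludes because the lift must end at the trivial deck transformation, as $\iota \neq 1$ in $\mathrm{Mod}(\varSigma_g)$. Finally, your closing ``transfer'' between diffeomorphisms and homeomorphisms has the same defect in miniature: it needs equivariant smoothing and isotopy statements, not just the standard non-equivariant equivalence of the two groups.
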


\noindent 
By Proposition \ref{prop:sym-isotopy-BH}, 
the natural surjection from $\pi_0(\mathrm{SHomeo}_+(\varSigma_g))$ 
to $\mathcal{H}(\varSigma_g)$ is an isomorphism. 
Therefore, one can define a homomorphism 
$q : \mathcal{H}(\varSigma_g) \to SB_{2g+2}$, see Theorem~\ref{thm_BH}.  

Recall that 
$\mathrm{SHomeo}_+({\Bbb H}_g)$ is the subgroup of $\mathrm{Homeo}_+({\Bbb H}_g)$ 
which consists of  orientation preserving homeomorphisms on ${\Bbb H}_g$ 
that  commute with $\mathcal{S}: {\Bbb H}_g \rightarrow {\Bbb H}_g$. 
We have the following  which is a version of Proposition \ref{prop:sym-isotopy-BH}.

\begin{prop}\label{prop:sym-isotopy}
Let $\phi_1$ and $\phi_2 \in \mathrm{SHomeo}_+(\mathbb{H}_g)$ be isotopic in 
$\mathrm{Homeo}_+(\mathbb{H}_g)$. 
Then $\phi_1$ and $\phi_2$ are isotopic in $\mathrm{SHomeo}_+(\mathbb{H}_g)$.  
\end{prop}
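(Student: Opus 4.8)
The plan is to pass to the double branched cover $\pi : \mathbb{H}_g \to \mathbb{H}_g/\mathcal{S} \simeq D^3$, whose branch locus $\mathrm{Fix}(\mathcal{S})$ maps to the wickets $\mathbf{A}$, and to combine the symmetric isotopy statement on the boundary surface (Proposition~\ref{prop:sym-isotopy-BH}) with the rigidity statement for $(D^3,\mathbf{A})$ (Proposition~\ref{prop:uniqueness}). Since $\phi_1,\phi_2 \in \mathrm{SHomeo}_+(\mathbb{H}_g)$ commute with $\mathcal{S}$, each descends to a homeomorphism $\bar\phi_i \in \mathrm{Homeo}_+(D^3,\mathbf{A})$; moreover the restrictions $\phi_i|_{\partial\mathbb{H}_g}$ lie in $\mathrm{SHomeo}_+(\varSigma_g)$ and descend to $\bar\phi_i|_{\partial D^3} \in \mathrm{Homeo}_+(S^2,\partial\mathbf{A})$.

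First I would restrict the given isotopy between $\phi_1$ and $\phi_2$ in $\mathrm{Homeo}_+(\mathbb{H}_g)$ to the boundary, producing an isotopy between $\phi_1|_{\partial\mathbb{H}_g}$ and $\phi_2|_{\partial\mathbb{H}_g}$ in $\mathrm{Homeo}_+(\varSigma_g)$. This boundary isotopy need not be symmetric, so it does not descend to the quotient sphere; here I invoke Proposition~\ref{prop:sym-isotopy-BH} to replace it by an isotopy inside $\mathrm{SHomeo}_+(\varSigma_g)$. Being symmetric, this new isotopy \emph{does} descend under the boundary quotient $\partial\mathbb{H}_g \to S^2$, yielding an isotopy between $\bar\phi_1|_{\partial D^3}$ and $\bar\phi_2|_{\partial D^3}$ in $\mathrm{Homeo}_+(S^2,\partial\mathbf{A})$. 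The hypotheses of Proposition~\ref{prop:uniqueness} are then met for $\bar\phi_1,\bar\phi_2$, and that proposition supplies an isotopy $\{\bar\psi_t\}_{0\le t\le 1}$ in $\mathrm{Homeo}_+(D^3,\mathbf{A})$ with $\bar\psi_0=\bar\phi_1$ and $\bar\psi_1=\bar\phi_2$.

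The final and most delicate step is to lift $\{\bar\psi_t\}$ to an isotopy in $\mathrm{SHomeo}_+(\mathbb{H}_g)$. Away from the branch locus, $\pi$ restricts to an honest double cover $\mathbb{H}_g\setminus\mathrm{Fix}(\mathcal{S}) \to D^3\setminus\mathbf{A}$, so the covering isotopy theorem lifts $\{\bar\psi_t|_{D^3\setminus\mathbf{A}}\}$ to a unique path starting at $\phi_1$; each lift commutes with the deck transformation $\mathcal{S}$ and extends continuously across $\mathrm{Fix}(\mathcal{S})$, giving an isotopy $\{\psi_t\}$ in $\mathrm{SHomeo}_+(\mathbb{H}_g)$ with $\psi_0=\phi_1$. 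I expect the main obstacle to be controlling the terminal map $\psi_1$: it is a lift of $\bar\phi_2$, hence equals either $\phi_2$ or $\mathcal{S}\circ\phi_2$. To rule out the latter I would observe that $\{\psi_t\}$ already exhibits $\phi_1\simeq\psi_1$ in $\mathrm{Homeo}_+(\mathbb{H}_g)$, while by hypothesis $\phi_1\simeq\phi_2$; if $\psi_1=\mathcal{S}\circ\phi_2$ this would force $\mathcal{S}\simeq\mathrm{id}$ in $\mathrm{Homeo}_+(\mathbb{H}_g)$, contradicting the fact that $[\mathcal{S}]$ is nontrivial in $\mathrm{Mod}(\mathbb{H}_g)$ (its boundary restriction is the nontrivial class $\iota\in\mathcal{H}(\varSigma_g)$, and $\mathrm{Mod}(\mathbb{H}_g)\hookrightarrow\mathrm{Mod}(\varSigma_g)$ is injective by \cite[Theorem~3.7]{FM}). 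Hence $\psi_1=\phi_2$ and $\{\psi_t\}$ is the desired symmetric isotopy. The genuine technical content is thus the branched-cover lifting of isotopies across $\mathrm{Fix}(\mathcal{S})$, which is exactly the handlebody analogue of the mechanism underlying Proposition~\ref{prop:sym-isotopy-BH}.
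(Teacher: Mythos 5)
Your proof follows the same route as the paper's: restrict the given isotopy to the boundary, symmetrize it via Proposition~\ref{prop:sym-isotopy-BH}, push $\phi_1,\phi_2$ down to $(D^3,\mathbf{A})$ and apply Proposition~\ref{prop:uniqueness}, then lift the resulting isotopy through the branched double cover $\mathbb{H}_g \to \mathbb{H}_g/\mathcal{S}$. The only difference is that you explicitly rule out the terminal lift being $\mathcal{S}\circ\phi_2$ (using the hypothesis $\phi_1\simeq\phi_2$ and the nontriviality of $[\mathcal{S}]$ in $\mathrm{Mod}(\mathbb{H}_g)$), a point the paper's one-line lifting step leaves implicit; this is added care within the same argument, not a different approach.
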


\begin{proof}
For $\phi \in \mathrm{SHomeo}_+(\mathbb{H}_g)$, we define a homeomorphism
$\underline{\phi}$ of $D^3 = \mathbb{H}_g/\iota$ by
$\underline{\phi}([x]) = [\phi(x)]$, where $[x]$ is an element of $D^3
= \mathbb{H}_g/\iota$
represented by $x \in \mathbb{H}_g$.
By Proposition \ref{prop:sym-isotopy-BH},
there is an isotopy in $\mathrm{SHomeo}_+(\varSigma_g)$
between $\phi_1|_{\partial \mathbb{H}_g}$ and $\phi_2|_{\partial \mathbb{H}_g}$.
This isotopy induces an isotopy between
$\underline{\phi_1}|_{\partial D^3}$ and
$\underline{\phi_2}|_{\partial D^3}$
in $\mathrm{Homeo}_+(\partial D^3, \partial \bold{A})$.
By Proposition \ref{prop:uniqueness}, there is an isotopy between
$\underline{\phi_1}$ and $\underline{\phi_2}$ in
$\mathrm{Homeo}_+(D^3, \bf{A})$.
Then the lift of this isotopy is an isotopy in $\mathrm{SHomeo}_+({\Bbb H}_g)$
between
$\phi_1$ and $\phi_2$.
%
\end{proof}

We are now ready to prove Theorem~\ref{thm_HiKi}.

\begin{proof}[Proof of Theorem~\ref{thm_HiKi}] 
By Proposition \ref{prop:sym-isotopy}, 
the natural surjection from $\pi_0(\mathrm{SHomeo}_+(\mathbb{H}_g))$ 
to $\mathcal{H}(\mathbb{H}_g)$ is an isomorphism. 
Therefore, we can define a homomorphism $Q : \mathcal{H}(\mathbb{H}_g) \to SH_{2g+2}$ 
so that $Q= q|_{\mathcal{H}({\Bbb H}_g)}$, 
see (\ref{equation_BH}) in Section~\ref{subsection_HypHand}. 
As a consequence of Theorem~\ref{thm_BH} and the fact that $\iota \in \mathcal{H}(\mathbb{H}_g)$, 
we see that Theorem~\ref{thm_HiKi} holds. 
\end{proof}

\begin{figure}[ht]
\begin{center}
\includegraphics[height=3.5cm]{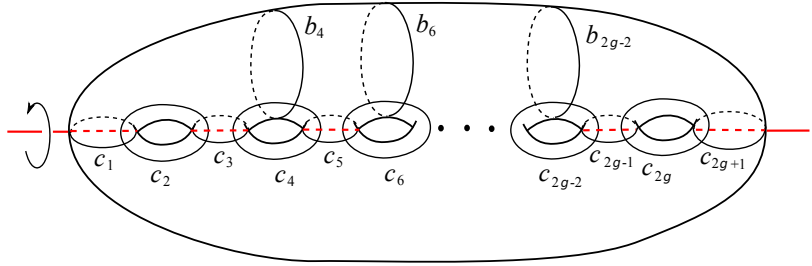}
\caption{Circles on $\partial \mathbb{H}_g$. }
\label{fig:handle1}
\end{center}
\end{figure}

As an application of Theorem~\ref{thm_HiKi}, 
we  determine a finite presentation for $\mathcal{H}({\Bbb H}_g)$ (Theorem~\ref{thm:presentaion-HHg}).  
To do this, we set some circles 
on $\partial \mathbb{H}_g$ as in Figure \ref{fig:handle1}. 
The circle $c_{2j-1}$ $(j \in \{1, \cdots, g+1\})$ 
bounds a disk properly embedded 
in $\mathbb{H}_g$, and $c_{2j-1}$ 
is preserved by the hyperelliptic involution $\mathcal{S}$. 
The circle $b_{2j}$ 
$(j \in \{2, \cdots, g-1\})$ 
also bounds a disk properly 
embedded in $\mathbb{H}_g$, 
but $b_{2j}$ is not preserved by $\mathcal{S}$. 
Let $t_{c_i}$ and $t_{b_{2j}}$ be the left-handed Dehn twist about $c_i$ and $b_{2j}$ 
respectively. 

\begin{rem}
\label{rem_index}
The group $\mathrm{Mod}(\mathbb{H}_g)$ is a subgroup of the mapping class group 
of $\partial \mathbb{H}_g$ of infinite index whenever $g \geq 2$. 
This is because $t_{c_2}$ is not an element of $\mathrm{Mod}(\mathbb{H}_g)$ and has an infinite order. 
The group $\mathcal{H}(\mathbb{H}_g)$ is a subgroup of $\mathrm{Mod}(\mathbb{H}_g)$ of infinite index  
whenever $g \geq 3$. 
In fact, $t_{b_4}$ is not an element of $\mathcal{H}(\mathbb{H}_g)$ but an element of $\mathrm{Mod}(\mathbb{H}_g)$, 
and $t_{b_4}$  has an infinite order. 
\end{rem}

\begin{thm}\label{thm:presentaion-HHg}
$\mathcal{H}({\Bbb H}_g)$ is generated by 
$\frak{r}_i = t_{c_{2i}} t_{c_{2i+1}} t_{c_{2i-1}}^{-1} t_{c_{2i}}^{-1}$, 
$\frak{s}_i = t_{c_{2i}}^{-1} t_{c_{2i+1}}^{-1} t_{c_{2i-1}}^{-1} t_{c_{2i}}^{-1}$ $(i = 1, \ldots, g)$, 
$\frak{t}_j = t_{c_{2j-1}}^{-1}$ $(j=1, \ldots, g, g+1)$ and the relations are as follows. 
\begin{enumerate}
\item[(1)] 
$\frak{r}_i \frak{r}_j = \frak{r}_j \frak{r}_i $ for $|i-j|>1$, 
$\frak{r}_i \frak{r}_{i+1} \frak{r}_i = \frak{r}_{i+1} \frak{r}_i \frak{r}_{i+1}$, 

\item[(2)] 
$\frak{s}_i \frak{s}_j = \frak{s}_j \frak{s}_i$ for $|i-j|>1$, 
$\frak{s}_i \frak{s}_{i+1} \frak{s}_i = 
\frak{s}_{i+1} \frak{s}_i \frak{s}_{i+1}$, 

\item[(3)] 
$\frak{r}_i \frak{s}_j = \frak{s}_j \frak{r}_i$ for $|i-j|>1$, 

\item[(4)] 
$\frak{r}_i \frak{s}_{i+1} \frak{s}_i = \frak{s}_{i+1} \frak{s}_i \frak{r}_{i+1}$, 
$\frak{r}_i \frak{r}_{i+1} \frak{s}_i = \frak{s}_{i+1} \frak{r}_i \frak{r}_{i+1}$, 
$\frak{s}_i \frak{s}_{i+1} \frak{r}_i = \frak{r}_{i+1} \frak{s}_i \frak{s}_{i+1}$, 

\item[(5)] 
$\frak{r}_i \frak{s}_i \frak{t}_i \frak{r}_i = \frak{t}_i \frak{s}_i$, 

\item[(6)] 
$\frak{t}_i \frak{t}_j = \frak{t}_j \frak{t}_i$, 

\item[(7)] 
$\frak{r}_i \frak{t}_j = \frak{t}_j \frak{r}_i$ for $j \not= i, i+1$, 
$\frak{t}_{i+1} \frak{r}_i= \frak{r}_i \frak{t}_i$, 

\item[(8)] 
$\frak{s}_i \frak{t}_j = \frak{t}_j \frak{s}_i$ for $j \not= i, i+1$, 
$\frak{t}_j \frak{s}_i= \frak{s}_i \frak{t}_k$, for $\{ i,i+1 \} = \{ j, k \}$, 

\item[(9)] 
$(\frak{s}_g \cdots  \frak{s}_2  \frak{s}_1 \frak{t}_1^2)^{g+1} = 1$, 

\item[(10)]
$(\frak{t}_1 \frak{s}_1 \frak{s}_2 \cdots \frak{s}_g \frak{r}_g^{-1} 
\cdots \frak{r}_2^{-1}  \frak{r}_1^{-1} \frak{t}_1)^2=1$ 
and  
$ \frak{t}_1 \frak{s}_1 \frak{s}_2 \cdots \frak{s}_g \frak{r}_g^{-1} \cdots 
\frak{r}_2^{-1}  \frak{r}_1^{-1} \frak{t}_1$  commutes with 
$\frak{r}_i, \frak{s}_i, \frak{t}_i$. 
\end{enumerate}
\end{thm}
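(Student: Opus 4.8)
The plan is to exhibit $\mathcal{H}(\mathbb{H}_g)$ as a central $\mathbb{Z}/2\mathbb{Z}$-extension of the Hilden group and to read off its presentation from one for $SH_{2g+2}$ by the standard recipe for central extensions. By Theorem~\ref{thm_HiKi} the projection $Q=q|_{\mathcal{H}(\mathbb{H}_g)}\colon \mathcal{H}(\mathbb{H}_g)\to SH_{2g+2}$ is surjective with kernel $\langle\iota\rangle\cong\mathbb{Z}/2\mathbb{Z}$, and since every element of $\mathcal{H}(\mathbb{H}_g)$ commutes with $\mathcal{S}$ by definition, $\iota$ is central; thus $1\to\langle\iota\rangle\to\mathcal{H}(\mathbb{H}_g)\overset{Q}{\to}SH_{2g+2}\to 1$ is a central extension. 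First I would assemble a finite presentation of $SH_{2g+2}$ (with $n=g+1$): a finite presentation of the planar wicket group $W_{2(g+1)}=\pi_1(\mathcal{A}_{g+1})$ available from Brendle--Hatcher \cite{Brendle-Hatcher} supplies the generators $r_i,s_i,t_j$ and the relations appearing as $(1)$--$(8)$; Lemma~\ref{lem:plane-sphere} ($SW_{2(g+1)}=W_{2(g+1)}/\langle\langle\vartheta\rangle\rangle$) adjoins the relator $\vartheta=t_1 s_1\cdots s_g r_g^{-1}\cdots r_1^{-1}t_1$; and Theorem~\ref{thm_HiWi} ($SH_{2(g+1)}=SW_{2(g+1)}/\langle\Delta^2\rangle$) adjoins the relator $\Delta^2=(s_g\cdots s_1 t_1^2)^{g+1}$.

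Next I would take the explicit lifts $\mathfrak{r}_i,\mathfrak{s}_i,\mathfrak{t}_j\in\mathcal{H}(\mathbb{H}_g)$ given by the stated products of Dehn twists about the $\mathcal{S}$-invariant curves $c_i$. Under the Birman--Hilden projection $q$ the twist $t_{c_i}$ descends to the half-twist corresponding to $\sigma_i$, so $q(\mathfrak{r}_i)=r_i$, $q(\mathfrak{s}_i)=s_i$, $q(\mathfrak{t}_j)=t_j$ land in the wicket-image $SH_{2g+2}$; since $\mathcal{H}(\mathbb{H}_g)=q^{-1}(SH_{2g+2})$ by Theorem~\ref{thm_HiKi}, these products indeed lie in $\mathcal{H}(\mathbb{H}_g)$ and are honest lifts of the wicket generators. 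Applying the central-extension presentation lemma, a presentation of $\mathcal{H}(\mathbb{H}_g)$ is obtained from that of $SH_{2g+2}$ by replacing each relator $w$ by $\widetilde{w}=\iota^{\,\epsilon_w}$ with $\epsilon_w\in\{0,1\}$ recording its lift, together with $\iota^2=1$ and the centrality of $\iota$. The key structural point is that $\iota$ need not be a new generator: it will be identified with one of the lifted relators.

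The decisive and hardest step is determining the coefficients $\epsilon_w$, i.e.\ deciding for each relator whether its lift is $1$ or the nontrivial element $\iota$. To settle this uniformly in $g$ I would compose with the symplectic representation $\mathcal{H}(\mathbb{H}_g)\to\mathrm{Sp}(2g,\mathbb{Z})$ on $H_1(\varSigma_g;\mathbb{Z})$: the hyperelliptic involution acts as $-I$ while the identity acts as $I$, and these differ for $g\ge 1$, so a lifted relator (which lies in $\langle\iota\rangle$) equals $\iota$ precisely when the corresponding product of symmetric Dehn twists acts as $-I$. Carrying out this linear-algebra check, I expect the braid-type and commutation relators $(1)$--$(8)$ and the full-twist relator $\Delta^2=(\mathfrak{s}_g\cdots\mathfrak{s}_1\mathfrak{t}_1^2)^{g+1}$ to lift trivially ($\epsilon=0$), yielding relations $(1)$--$(9)$ verbatim, whereas the spherical relator $\vartheta$ lifts to $\iota$ ($\epsilon=1$). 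Identifying $\iota$ with the lifted word $\mathfrak{t}_1\mathfrak{s}_1\cdots\mathfrak{s}_g\mathfrak{r}_g^{-1}\cdots\mathfrak{r}_1^{-1}\mathfrak{t}_1$, the relations $\iota^2=1$ and ``$\iota$ is central'' become exactly relation $(10)$.

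As a final consistency step I would confirm that $(1)$--$(8)$ hold on the nose among the symmetric Dehn twists (they follow from disjointness commutations and from braid relations for twists about curves meeting once), so no further relators are needed. The main obstacle I anticipate is precisely this sign bookkeeping of the $\epsilon_w$: verifying, uniformly across all $g$, that $\Delta^2$ lifts to the identity while $\vartheta$ lifts to $\iota$, which is where the homological $\pm I$ criterion does the essential work and must be matched carefully against the explicit Dehn-twist words.
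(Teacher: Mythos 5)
Your proposal is correct and takes essentially the same route as the paper's own proof: both realize $\mathcal{H}(\mathbb{H}_g)$ as the central $\mathbb{Z}/2$-extension of $SH_{2g+2}$ given by Theorem~\ref{thm_HiKi}, assemble the presentation of $SH_{2g+2}$ from Brendle--Hatcher's presentation of $W_{2g+2}$ together with the relators $\vartheta$ (Lemma~\ref{lem:plane-sphere}) and $\Delta^2$ (Theorem~\ref{thm_HiWi}), and then lift, with relations (9)--(10) recording $\Delta^2=1$, $\tilde{\vartheta}=\iota$, $\iota^2=1$ and the centrality of $\iota$. The only difference is in the sign bookkeeping, where your symplectic $\pm I$ criterion is a valid and more explicit substitute for the paper's bare assertions (the same identifications also follow from braid-group identities, e.g.\ $(s_g\cdots s_1t_1^2)^{g+1}=\Delta^2$ in $B_{2g+2}$, whose image under $\sigma_i\mapsto t_{c_i}$ is killed by the chain relation, while $\vartheta$ maps to the hyperelliptic relator representing $\iota$).
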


\begin{proof}
We use Theorems~\ref{thm_HiWi} and \ref{thm_HiKi}. 
Brendle-Hatcher expressed a finite presentation for 
$\pi_1(\mathcal{A}_{g+1})(=W_{2g+2})$ in 
\cite[Propositions~3.2, 3.6]{Brendle-Hatcher}. 
The relations (1)--(4) come from \cite[Proposition~3.2]{Brendle-Hatcher} and 
(5)--(8) come from \cite[Proposition~3.6]{Brendle-Hatcher}.  
The relation (9) means that $\Delta^2$ is trivial in $SH_{2g+2}$. 
In the relation (10), $\frak{t}_1 \frak{s}_1 \frak{s}_2 \cdots \frak{s}_g \frak{r}_g^{-1} 
\cdots \frak{r}_2^{-1}  \frak{r}_1^{-1} \frak{t}_1$ equals $\iota$, and 
the relation means $\iota^2=1$ and any element of $\mathcal{H}({\Bbb H}_g)$ commutes 
with $\iota$. 
\end{proof}

By a straightforward computation together with Theorem~\ref{thm:presentaion-HHg}, 
we have the following. 
\begin{cor}\label{cor:abelianization}
The abelianization 
$\mathcal{H}({\Bbb H}_g) ^{ab} =\mathcal{H}({\Bbb H}_g)/[\mathcal{H}({\Bbb H}_g), \mathcal{H}({\Bbb H}_g)]$ 
is isomorphic to $\mathbb{Z} \oplus \mathbb{Z}_2 \oplus \mathbb{Z}_2$ 
for any $g \geq 2$. 
\end{cor}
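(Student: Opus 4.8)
The plan is to read the abelianization directly off the finite presentation in Theorem~\ref{thm:presentaion-HHg}. Since the abelianization of a finitely presented group is the free abelian group on the generators modulo the abelianized relators, I would first pass to the additively-written free abelian group on $\frak{r}_1,\dots,\frak{r}_g$, $\frak{s}_1,\dots,\frak{s}_g$, $\frak{t}_1,\dots,\frak{t}_{g+1}$, and then impose relations (1)--(10) with every commutator set to zero.

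The first step collapses almost all generators. The braid relation in (1) gives $\frak{r}_i=\frak{r}_{i+1}$, the braid relation in (2) gives $\frak{s}_i=\frak{s}_{i+1}$, and the relation $\frak{t}_{i+1}\frak{r}_i=\frak{r}_i\frak{t}_i$ in (7) gives $\frak{t}_{i+1}=\frak{t}_i$. Hence all $\frak{r}_i$ coincide with one class $R$, all $\frak{s}_i$ with $S$, and all $\frak{t}_j$ with $T$, so that $\mathcal{H}(\mathbb{H}_g)^{ab}$ is a quotient of $\mathbb{Z}R\oplus\mathbb{Z}S\oplus\mathbb{Z}T$. One then checks that relations (3), (4), (6), the remaining clauses of (7), and (8) all become trivial or merely repeat these identifications, contributing nothing new.

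The surviving relations are the three carrying exponent information, which I would abelianize by counting letters. Relation (5) gives $2R=0$; relation (9) gives $(g+1)(gS+2T)=0$; and the word in (10) tallies to $2T+gS-gR$, so squaring yields $4T+2gS-2gR=0$ (its commuting clause being vacuous after abelianizing). Using $2R=0$ to eliminate the $R$-term, the presentation matrix on $(R,S,T)$ becomes
\[
\begin{pmatrix} 2 & 0 & 0 \\ 0 & g(g+1) & 2(g+1) \\ 0 & 2g & 4 \end{pmatrix},
\]
so $R$ splits off a $\mathbb{Z}_2$ summand and it remains to compute the Smith normal form of the block $\begin{pmatrix} g(g+1) & 2(g+1) \\ 2g & 4 \end{pmatrix}$.

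The one genuinely arithmetic point is this $2\times 2$ block, and it is where care is needed. Its determinant is $4g(g+1)-4g(g+1)=0$, so it contributes a free summand $\mathbb{Z}$ together with a single torsion factor $\mathbb{Z}/d_1$, where $d_1$ is the gcd of the four entries. The entry $4$ forces $d_1\mid 4$, all entries are even so $d_1\in\{2,4\}$, and since exactly one of $2g$ and $2(g+1)$ is congruent to $2\pmod 4$ (by the parity of $g$), not all entries are divisible by $4$; hence $d_1=2$ for every $g\ge 2$. The block therefore gives $\mathbb{Z}\oplus\mathbb{Z}_2$, and combined with the $\mathbb{Z}_2$ from $R$ we obtain $\mathcal{H}(\mathbb{H}_g)^{ab}\cong\mathbb{Z}\oplus\mathbb{Z}_2\oplus\mathbb{Z}_2$, as claimed in Corollary~\ref{cor:abelianization}. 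The main obstacle is purely clerical: correctly tallying the exponents of $R$, $S$, $T$ in the long relators (9) and (10), after which the Smith-normal-form computation is immediate.
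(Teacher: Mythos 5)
Your computation is correct and is exactly the argument the paper intends: the paper derives Corollary~\ref{cor:abelianization} as a ``straightforward computation'' from the presentation in Theorem~\ref{thm:presentaion-HHg}, which is precisely what you carry out (collapse to the three classes $R,S,T$ via (1), (2), (7); extract $2R=0$ from (5) and the rows $g(g+1)S+2(g+1)T$ and $2gS+4T$ from (9) and (10); then Smith normal form with $d_1=\gcd=2$). No gaps to report.
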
 

\noindent 
Corollary~\ref{cor:abelianization} is in contrast with the abelianizations of 
other groups which contain $\mathcal{H}({\Bbb H}_g)$ as a subgroup. 
In fact, $\mathrm{Mod}(\varSigma_1)^{ab} =\mathbb{Z}/12\mathbb{Z}$, 
$\mathrm{Mod}(\varSigma_2)^{ab} = \mathbb{Z}/10 \mathbb{Z}$, and 
$\mathrm{Mod}(\varSigma_g)^{ab}$ is trivial when $g \geq 3$ 
(see \cite[\S 5.1]{Farb-Margalit} for example).  
In the case of the hyperelliptic mapping class groups, 
$\mathcal{H}(\varSigma_g)^{ab} = \mathbb{Z}/2(2g+1)\mathbb{Z}$ when $g$ is even and 
$\mathcal{H}(\varSigma_g)^{ab}= \mathbb{Z}/4(2g+1)\mathbb{Z}$ when $g$ is odd. 
They are  proved straightforwardly from the presentation of $\mathcal{H}(\varSigma_g)$ 
by Birman-Hilden \cite[Theorem 8]{Birman-Hilden}. 
For the handlebody groups, 
$\mathrm{Mod}(\mathbb{H}_g)^{ab}$ is a finite abelian group when $g \geq 3$, 
see \cite{Wajnryb,Hirose}.

\end{document}